 \newtheorem{thm}{Theorem}[section]
 \newtheorem{cor}[thm]{Corollary}
 \newtheorem{lem}[thm]{Lemma}
 \newtheorem{prop}[thm]{Proposition}
 \newtheorem{defn}[thm]{Definition}
 \newtheorem{rem}[thm]{Remark}
 \def\k{\mathbbm{k}}
 \newcommand{\Hom}{\mathrm{Hom}}
\title{Isomorphism problem and homological properties of DG free algebras}
\author{X.-F. Mao}
\address{Department of Mathematics, Shanghai University, Shanghai 200444, China}
\email{xuefengmao@shu.edu.cn}
\author{J.-F. Xie}
\address{Department of Mathematics, Shanghai University, Shanghai 200444, China}
\email{jianfengxie@yahoo.com}
\author{Y.-N. Yang}
\address{Department of Mathematics, Shanghai University, Shanghai 200444, China}
\email{mooly@shu.edu.cn}
\author{Almire. Abla}
\address{Department of Mathematics, College of Mathematics and Statistics, Kashgar University, Kashgar, Xinjiang, 844006, China}
\email{657731648@qq.com}
\date{}
\subjclass[2010]{Primary 16E45, 16E65, 16W20,16W50}
\keywords{free algebra, DG algebra, isomorphism problem, Calabi-Yau, Koszul, homologically smooth}
\begin{document}

\maketitle \def\abstactname{abstact}
\begin{abstract}
A differential graded (DG for short) free algebra $\mathcal{A}$ is a connected cochain DG algebra
such that its underlying graded algebra is $$\mathcal{A}^{\#}=\k\langle x_1,x_2,\cdots, x_n\rangle,\,\, \text{with}\,\, |x_i|=1,\,\, \forall i\in \{1,2,\cdots, n\}.$$ We prove that the differential structures on DG free algebras are in one to one correspondence with  the set of crisscross ordered $n$-tuples of $n\times n$ matrixes. We also give a criterion to judge whether two DG free algebras are isomorphic.

As an application, we consider the case of $n=2$. Based on the isomorphism classification, we compute the cohomology graded algebras of non-trivial DG free algebras with $2$ generators, and show that all those non-trivial DG free algebras are Koszul and Calabi-Yau.
  \end{abstract}

\maketitle

\section{introduction}

 Throughout this paper, $\k$ is an algebraically closed field of characteristic $0$. The construction of some interesting  DG algebras is important in DG homological algebra. One always depends on the computations of some specific examples to deduce general rules on DG algebras.  Recall that a cochain DG algebra is
a $\Bbb{Z}$-graded
$\k$-algebra $\mathcal{A}$ together with a degree one $\k$-linear map $\partial_{\mathcal{A}}$ from $\mathcal{A}$  to itself
such that $\partial_{\mathcal{A}}\circ \partial_{\mathcal{A}}  = 0$ and
$$\partial_{\mathcal{A}}(ab) = \partial_{\mathcal{A}} (a)b + (-1)^{|a|}a\partial_{\mathcal{A}} (b),$$
for all graded elements $a, b\in \mathcal{A}$.  By definition, the graded algebra structure and the differential structure are two  essential factors of  a DG algebra.
If one regard a DG algebra $\mathcal{A}$ as a living thing, then the underlying graded algebra $\mathcal{A}^{\#}$  and the differential $\partial_{\mathcal{A}}$ are its body and soul, respectively. An efficient way to create meaningful DG algebras is to select some well known regular graded algebras as bodies, and then inject reasonable differential structures into it.
 In the literature, there has been some attempt for this. Especially, we have the following list:

 \begin{center}
\begin{tabular}{|l|l|}
  \hline
  Reference & the chosen underlying graded algebra \\
  \hline
  \cite{Mao} & Artin-Schelter regular algebra of global dimension $2$ \\
  \cite{MHLX} & graded down-up algebra\\
  \cite{MGYC} & polynomial algebra \\
  \hline
\end{tabular}
\end{center}
In this paper, we choose free graded associative algebras as the underlying graded algebras. We say that a cochain DG algebra $\mathcal{A}$ is a DG free algebra if $\mathcal{A}^{\#}$ is the free algebra $\k\langle x_1,x_2,\cdots,x_n\rangle$ with each $|x_i|=1$.  A cochain DG algebra $\mathcal{A}$ is called
non-trivial if $\partial_{\mathcal{A}}\neq 0$, and $\mathcal{A}$ is said to be connected if its underlying graded algebra $\mathcal{A}^{\#}$ is a connected  graded algebra. Obviously, any DG free algebra is a connected cochain DG algebra. In order to study DG free algebras systematically,
we describe all possible differential structures on DG free algebras by the following theorem (see Theorem \ref{diffstr}). \\
\begin{bfseries}
Theorem \ A.
\end{bfseries}
Let $(\mathcal{A},\partial_{\mathcal{A}})$ be a connected cochain DG algebra such that $\mathcal{A}^{\#}$ is a free graded algebra $\k\langle x_1,x_2,\cdots,x_n\rangle $ with $|x_i|=1$, for any $i\in \{1,2,\cdots,n\}$.
Then there exist a crisscross ordered $n$-tuple $(M^1,M^2,\cdots, M^n)$ of $n\times n$ matrixes such that
 $\partial_{\mathcal{A}}$ is defined  by
$$\partial_{\mathcal{A}}(x_i)=(x_1,x_2,\cdots, x_n)M^i\left(
                         \begin{array}{c}
                           x_1 \\
                           x_2  \\
                           \vdots  \\
                           x_n
                         \end{array}
                       \right). $$  Conversely, given a crisscross ordered $n$-tuple $(M^1,M^2,\cdots, M^n)$ of $n\times n$ matrixes, we can define a differential $\partial$ on $\k\langle x_1,x_2,\cdots,x_n\rangle$ by
\begin{align*}\partial(x_i)=(x_1,x_2,\cdots, x_n)M^i\left(
                         \begin{array}{c}
                           x_1 \\
                           x_2  \\
                           \vdots  \\
                           x_n
                         \end{array}
                       \right), \forall i\in\{1,2,\cdots, n\}
                       \end{align*}
                        such that $(\k\langle x_1,x_2,\cdots, x_n\rangle, \partial)$ is a cochain DG algebra.

                        One sees the definition of crisscross ordered $n$-tuple $(M^1,M^2,\cdots, M^n)$ of $n\times n$ matrixes in Definition \ref{crisscrossed}.
Theorem A indicates that DG free algebras have plenty of differential structures.
To study DG free algebras systematically, we should consider the isomorphism problem first. This paper gives a criterion by the following theorem (see Theorem \ref{isom}). \\
\begin{bfseries}
Theorem \ B.
\end{bfseries}
Let $\mathcal{A}$ and $\mathcal{B}$ be two DG free algebras such that $$ \mathcal{A}^{\#}=\k\langle x_1,x_2,\cdots, x_n\rangle, \quad \mathcal{B}^{\#}=\k\langle y_1,y_2,\cdots, y_n\rangle, $$
with each $|x_i|=|y_i|=1$. Assume that $\partial_{\mathcal{A}}$ and $\partial_{\mathcal{B}}$ are defined by crisscross ordered $n$-tuples $(M^1,\cdots, M^n)$ and $(N^1,\cdots, N^n)$,  respectively. Then $\mathcal{A}\cong \mathcal{B}$ if and only if there exists $A=(a_{ij})_{n\times n}\in \mathrm{GL}_n(\k)$ such that
$$(a_{ij}E_n)_{n^2\times n^2}\left(
                         \begin{array}{c}
                           N_1 \\
                           N_2  \\
                           \vdots  \\
                           N_n
                         \end{array}
                       \right)=\left(
                         \begin{array}{c}
                           A^TM^1A \\
                           A^TM^2A  \\
                           \vdots  \\
                           A^TM^nA
                         \end{array}
                       \right). $$

In general, the properties of a DG algebra are determined by the joint effects of its underlying graded algebra structure and differential structure. However, it is feasible, at least in some special
cases, to judge some properties of a DG algebra $\mathcal{A}$ from $\mathcal{A}^{\#}$.  For example, it is shown in \cite{Mao} that
 a connected cochain DG algebra $\mathcal{B}$ is Gorenstein if its underlying graded algebra $\mathcal{B}^{\#}$ is an Artin-Schelter regular algebra of global dimension $2$. In \cite{MHLX},  all non-trivial Noetherian DG down-up algebras are proved Calabi-Yau. Recently,  DG polynomial algebras with degree one generators are systematically studied in \cite{MGYC}. It is proved that any non-trivial DG polynomial algebra is Calabi-Yau and a trivial DG polynomial algebra is Calabi-Yau if and only if it is generated by odd number of generators. In this paper, we attempt to
 figure out homological properties of DG free algebras.
  We show that any trivial DG free algebra is not Gorenstein but homologically smooth (see Proposition \ref{zerodiff}).  When it comes to non-trivial cases, things become more complicated since
 it seems not feasible to classify all the isomorphism classes of DG free algebras when $n\ge 3$.
 As a consolation, we completely solve the case of $n=2$. We classify the isomorphism classes of DG free algebras with $2$ degree one generators (see Proposition \ref{nonsym}, Proposition \ref{symone} and Proposition \ref{symtwo}). And we reach the following interesting conclusion (see Theorem \ref{finresult}). \\
 \begin{bfseries}
Theorem \ C.
\end{bfseries}
 Let $\mathcal{A}$ be a DG free algebra with $2$ degree one generators. Then $\mathcal{A}$ is a Koszul Calabi-Yau DG algebra if and only if $\partial_{\mathcal{A}}\neq 0$.

\section{differential structures on dg free algebras}
In this section, we will study the differential structures on DG free algebras. For this, we introduce the definition of crisscross ordered $n$-tuple of $n\times n$ matrixes first.
\begin{defn}\label{crisscrossed}
{\rm Let $(M^1,M^2,\cdots, M^n)$ be an ordered $n$-tuple of $n\times n$ matrixes with each
\begin{align*}
M^i=(c^i_1,c^i_2,\cdots,c^i_n)=\left(
                         \begin{array}{c}
                           r^i_1 \\
                           r^i_2  \\
                           \vdots  \\
                           r^i_n
                         \end{array}
                       \right), i=1,2,\cdots,n.
\end{align*}
We say that $(M^1,M^2,\cdots, M^n)$ is crisscross if
$$
\sum\limits_{k=1}^n[c_j^kr_k^i-c_k^ir_j^k]=(0)_{n\times n}, \forall i,j\in \{1,2,\cdots,n\}.
$$

}
\end{defn}
In the rest of this section, we will reveal the close relations between crisscross ordered $n$-tuples of $n\times n$ matrixes and the differential structure of DG free algebras. The following lemma will be used in subsequent computations.

\begin{lem}\label{basiclem}
Let $N^1,N^2,\cdots, N^n$ be $n\times n$ matrixes such that
\begin{align*}
(x_1,x_2,\cdots,x_n)[N^1x_1+N^2x_2+\cdots + N^nx_n]\left(
                         \begin{array}{c}
                           x_1 \\
                           x_2  \\
                           \vdots  \\
                           x_n
                         \end{array}
                       \right)=0
\end{align*}
in $\k\langle x_1,x_2,\cdots,x_n\rangle$. Then each $N^i=(0)_{n\times n},  \forall i\in \{1,2,\cdots,n\}$.
\end{lem}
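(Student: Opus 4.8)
The plan is to expand the left-hand side directly into a linear combination of degree-three monomials of $\k\langle x_1,x_2,\cdots,x_n\rangle$ and then invoke the linear independence of monomials in a free algebra. Writing $N^k=(N^k_{ij})_{n\times n}$ for each $k$, the $(i,j)$-entry of the matrix $N^1x_1+N^2x_2+\cdots+N^nx_n$ is $\sum_{k=1}^n N^k_{ij}x_k$. Performing the row-vector times matrix times column-vector multiplication, and using that each scalar $N^k_{ij}\in\k$ is central, I would obtain
\begin{align*}
(x_1,x_2,\cdots,x_n)\left[\sum_{k=1}^n N^kx_k\right]\left(
                         \begin{array}{c}
                           x_1 \\
                           x_2 \\
                           \vdots \\
                           x_n
                         \end{array}
                       \right)=\sum_{i=1}^n\sum_{j=1}^n\sum_{k=1}^n N^k_{ij}\,x_ix_kx_j.
\end{align*}

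The key observation is then that $\{x_ix_kx_j : i,j,k\in\{1,2,\cdots,n\}\}$ is exactly the set of words of length three in $\k\langle x_1,x_2,\cdots,x_n\rangle$, and these are part of a $\k$-basis of the free algebra. Moreover, the assignment $(i,k,j)\mapsto x_ix_kx_j$ is injective: a length-three word determines its first, second and third letters uniquely, so distinct index triples yield distinct monomials. Hence, in the displayed sum, each coefficient $N^k_{ij}$ is attached to a distinct basis element, and no cancellation between different triples can occur.

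Therefore the hypothesis that this sum vanishes in $\k\langle x_1,x_2,\cdots,x_n\rangle$ forces every coefficient to be zero by linear independence, giving $N^k_{ij}=0$ for all $i,j,k$. Since $i,j$ run over all row and column indices, this says each $N^k=(0)_{n\times n}$, which is the claim. The calculation is routine; the only point deserving attention is the bijection between the index triples $(i,k,j)$ and the length-three words $x_ix_kx_j$, which is precisely the freeness of the underlying graded algebra and is what makes the implication work. (By contrast, in a graded algebra with relations, distinct triples could collapse to the same element and the conclusion would fail.)
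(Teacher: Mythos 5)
Your proposal is correct and follows essentially the same route as the paper: expand the triple product into the sum $\sum_{i,j,k}N^k_{ij}\,x_ix_kx_j$ and conclude from the linear independence of the length-three words in the free algebra that every coefficient vanishes. The only difference is cosmetic — you make the injectivity of $(i,k,j)\mapsto x_ix_kx_j$ explicit, whereas the paper leaves that final step implicit after the same expansion.
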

\begin{proof}
Let $N^i=(a^i_{jk})_{n\times n}$, $i=1,2,\cdots, n$.  By the assumption, we have
\begin{align*}
0=&(x_1,x_2,\cdots,x_n)[N^1x_1+N^2x_2+\cdots + N^nx_n]\left(
                         \begin{array}{c}
                           x_1 \\
                           x_2  \\
                           \vdots  \\
                           x_n
                         \end{array}
                       \right) \\
=&  (x_1,x_2,\cdots,x_n)\left(
                          \begin{array}{ccccc}
                            \sum\limits_{i=1}^na^i_{11}x_i & \cdots & \sum\limits_{i=1}^na_{1k}^ix_i & \cdots & \sum\limits_{i=1}^na_{1n}^ix_i \\
                            \vdots & \vdots & \vdots & \vdots & \vdots \\
                            \sum\limits_{i=1}^n a_{j1}^ix_i & \cdots & \sum\limits_{i=1}^na_{jk}^ix_i & \cdots & \sum\limits_{i=1}^na_{jn}^ix_i \\
                            \vdots & \vdots & \vdots & \vdots & \vdots \\
                            \sum\limits_{i=1}^na_{n1}^ix_i & \cdots & \sum\limits_{i=1}^na_{nk}^ix_i & \cdots & \sum\limits_{i=1}^na_{nn}^ix_i \\
                          \end{array}
                        \right)\left(
                         \begin{array}{c}
                           x_1 \\
                           x_2  \\
                           \vdots  \\
                           x_n
                         \end{array}
                       \right) \\
=&(\sum\limits_{j=1}^n\sum\limits_{i=1}^na_{j1}^ix_jx_i,\sum\limits_{j=1}^n\sum\limits_{i=1}^na_{j2}^ix_jx_i,\cdots,\sum\limits_{j=1}^n\sum\limits_{i=1}^na_{jn}^ix_jx_i)\left(
                         \begin{array}{c}
                           x_1 \\
                           x_2  \\
                           \vdots  \\
                           x_n
                         \end{array}
                       \right) \\
=&\sum\limits_{k=1}^n\sum\limits_{j=1}^n\sum\limits_{i=1}^na_{jk}^ix_jx_ix_k \quad \text{in}\quad \k\langle x_1,x_2,\cdots, x_n\rangle.
\end{align*}
This implies that $a_{jk}^i=0,\forall i,j,k\in \{1,2,\cdots, n\}$.  So $N^i=(0)_{n\times n}, i=1,2,\cdots,n$.
\end{proof}

\begin{thm}\label{diffstr}
Let $(\mathcal{A},\partial_{\mathcal{A}})$ be a connected cochain DG algebra such that $\mathcal{A}^{\#}$ is a free graded algebra $\k\langle x_1,x_2,\cdots,x_n\rangle $ with $|x_i|=1$, for any $i\in \{1,2,\cdots,n\}$.
Then there exist a crisscross ordered $n$-tuple $(M^1,M^2,\cdots, M^n)$ of
$n\times n$ matrixes such that
 $\partial_{\mathcal{A}}$ is defined  by
$$\partial_{\mathcal{A}}(x_i)=(x_1,x_2,\cdots, x_n)M^i\left(
                         \begin{array}{c}
                           x_1 \\
                           x_2  \\
                           \vdots  \\
                           x_n
                         \end{array}
                       \right). $$  Conversely, given a crisscross ordered $n$-tuple $(M^1,M^2,\cdots, M^n)$ of
$n\times n$ matrixes,  we can define a differential $\partial$ on $\k\langle x_1,x_2,\cdots,x_n\rangle$ by
\begin{align*}\partial(x_i)=(x_1,x_2,\cdots, x_n)M^i\left(
                         \begin{array}{c}
                           x_1 \\
                           x_2  \\
                           \vdots  \\
                           x_n
                         \end{array}
                       \right), \forall i\in\{1,2,\cdots, n\}
                       \end{align*}
                        such that $(\k\langle x_1,x_2,\cdots, x_n\rangle, \partial)$ is a cochain DG algebra.

\end{thm}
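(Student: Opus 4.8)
The plan is to treat $\partial_{\mathcal{A}}$ as a graded derivation on the tensor algebra and to translate the three defining conditions—that it has degree one, obeys the graded Leibniz rule, and squares to zero—into statements about the coefficient matrices $M^i$. First I would run the degree count: since $|x_i|=1$ and $\partial_{\mathcal{A}}$ raises degree by one, each $\partial_{\mathcal{A}}(x_i)$ lies in the degree-two component of $\k\langle x_1,\ldots,x_n\rangle$, whose basis is $\{x_jx_k\}$. Hence $\partial_{\mathcal{A}}(x_i)=\sum_{j,k}m^i_{jk}x_jx_k=(x_1,\ldots,x_n)M^i(x_1,\ldots,x_n)^T$ with $M^i=(m^i_{jk})$, which is exactly the asserted matrix form. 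For the converse I would invoke the universal property: the free graded algebra is the tensor algebra $T(V)$ on $V=\bigoplus_i\k x_i$, and a degree-one graded derivation of $T(V)$ is freely and uniquely determined by its restriction to $V$; thus any prescription $\partial(x_i)=(x_1,\ldots,x_n)M^i(x_1,\ldots,x_n)^T$ extends uniquely to a degree-one map satisfying the graded Leibniz rule. It then remains to match $\partial^2=0$ with the crisscross condition, and since this matching is an equivalence it settles both directions at once.

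Next I would observe that $\partial_{\mathcal{A}}^2$ is itself a derivation of degree two: a short computation using $|\partial_{\mathcal{A}}(a)|=|a|+1$ shows that the two cross terms in $\partial_{\mathcal{A}}^2(ab)$ carry opposite signs and cancel, leaving the ordinary Leibniz identity $\partial_{\mathcal{A}}^2(ab)=\partial_{\mathcal{A}}^2(a)b+a\partial_{\mathcal{A}}^2(b)$. Because a derivation of the free algebra vanishes as soon as it vanishes on the generators, the cocycle condition $\partial_{\mathcal{A}}^2=0$ is equivalent to the finite system $\partial_{\mathcal{A}}^2(x_i)=0$ for $i=1,\ldots,n$. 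This reduces the whole problem to a degree-three computation.

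I would then expand $\partial_{\mathcal{A}}^2(x_i)$ directly: applying the Leibniz rule to $\partial_{\mathcal{A}}(x_i)=\sum_{j,k}m^i_{jk}x_jx_k$ and substituting $\partial_{\mathcal{A}}(x_j)=(x_1,\ldots,x_n)M^j(x_1,\ldots,x_n)^T$ yields
$$\partial_{\mathcal{A}}^2(x_i)=\sum_{a,b,c}\left(\sum_{s}m^i_{sc}m^s_{ab}-\sum_{s}m^i_{as}m^s_{bc}\right)x_ax_bx_c.$$
Reading off, for each middle index $j$, the $(a,c)$-matrix of coefficients puts this in the shape $(x_1,\ldots,x_n)\big[\sum_{j}N^{j}_{(i)}x_j\big](x_1,\ldots,x_n)^T$. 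Recognizing the entries $m^s_{aj}=(c^s_j)_a$ and $m^i_{sc}=(r^i_s)_c$ (and likewise $m^i_{as}=(c^i_s)_a$, $m^s_{jc}=(r^s_j)_c$ for the second sum) identifies this coefficient matrix with the outer-product expression $N^{j}_{(i)}=\sum_{s}\big(c^s_j r^i_s-c^i_s r^s_j\big)$. By Lemma \ref{basiclem}, $\partial_{\mathcal{A}}^2(x_i)=0$ holds if and only if $N^{j}_{(i)}=(0)_{n\times n}$ for every $j$; letting $i$ and $j$ range over $\{1,\ldots,n\}$ this is precisely the crisscross condition of Definition \ref{crisscrossed}. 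Read in the opposite direction, the same equivalence shows that a crisscross tuple produces a derivation with $\partial^2=0$, which completes the converse.

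The conceptual steps—the degree count, the derivation property of $\partial_{\mathcal{A}}^2$, and the reduction to generators—are routine. The main obstacle is the purely bookkeeping task of the third paragraph: one must carry out the index manipulation so that the degree-three coefficients of $\partial_{\mathcal{A}}^2(x_i)$ reassemble exactly into the column and row slices $c^k_j,r^i_k$ appearing in Definition \ref{crisscrossed}. The delicate points are that the \emph{middle} index of the monomial $x_ax_bx_c$ is the one playing the role of $j$ in the crisscross equations, and that the two sums must align with $c^k_jr^i_k$ and $c^i_kr^k_j$ rather than with their transposes. Since the derivation property already eliminated all signs, this is a sign-free matching, and it is where essentially all the content of the theorem resides.
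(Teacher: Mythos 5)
Your proposal is correct and follows essentially the same route as the paper: the degree count gives the quadratic matrix form of $\partial_{\mathcal{A}}(x_i)$, the Leibniz expansion of $\partial_{\mathcal{A}}^2(x_i)$ is collected monomial by monomial, and Lemma \ref{basiclem} converts the vanishing of the degree-three coefficients into exactly the crisscross equations, with the middle index of $x_ax_bx_c$ playing the role of $j$ just as in the paper's computation. Your explicit justification of the converse—that $\partial^2$ is itself a derivation, so its vanishing on generators forces it to vanish everywhere—is a standard point that the paper leaves implicit, and including it only makes the argument more complete.
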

\begin{proof}
Since the differential $\partial_{\mathcal{A}}$ of $\mathcal{A}$ is a $\k$-linear map of degree $1$, we may let
$$
\partial_{\mathcal{A}}(x_i) = (x_1,x_2,\cdots, x_n)M^i\left(
                         \begin{array}{c}
                           x_1 \\
                           x_2  \\
                           \vdots  \\
                           x_n
                         \end{array}
                       \right), $$
where \begin{align*}
M^i=(c^i_1,c^i_2,\cdots,c^i_n)=\left(
                         \begin{array}{c}
                           r^i_1 \\
                           r^i_2  \\
                           \vdots  \\
                           r^i_n
                         \end{array}
                       \right), \forall i\in\{1,2,\cdots, n\}.
\end{align*}
Since  $(\mathcal{A},\partial_{\mathcal{A}})$ is a cochain DG
algebra, $\partial_{\mathcal{A}}$ satisfies the
Leibniz rule and
\begin{align}\label{eqs}
\partial_{\mathcal{A}}\circ \partial_{\mathcal{A}}(x_i) = 0, \forall i\in\{1,2,\cdots,n\}.
\end{align}
Hence
\begin{align*}
&\quad\quad 0=\partial_A\circ\partial_A(x_i)=\partial_A[(x_1,x_2,\cdots, x_n)M^i\left(
                         \begin{array}{c}
                           x_1 \\
                           x_2  \\
                           \vdots  \\
                           x_n
                         \end{array}
                       \right)]\\
                    & =(\partial_A(x_1),\partial_A(x_2),\cdots,\partial_A(x_n)) M^i\left(
                         \begin{array}{c}
                           x_1 \\
                           x_2  \\
                           \vdots  \\
                           x_n
                         \end{array}
                       \right)  - (x_1,x_2,\cdots, x_n)M^i \left(
                         \begin{array}{c}
                           \partial_A(x_1) \\
                           \partial_A(x_2)  \\
                           \vdots  \\
                           \partial_A(x_n)
                         \end{array}
                       \right)\\
                   &   =(x_1,x_2,\cdots, x_n) [M^1\left(
                         \begin{array}{c}
                           x_1 \\
                           x_2  \\
                           \vdots  \\
                           x_n
                         \end{array}
                       \right),M^2\left(
                         \begin{array}{c}
                           x_1 \\
                           x_2  \\
                           \vdots  \\
                           x_n
                         \end{array}
                       \right),\cdots,M^n\left(
                         \begin{array}{c}
                           x_1 \\
                           x_2  \\
                           \vdots  \\
                           x_n
                         \end{array}
                       \right)]M^i\left(
                         \begin{array}{c}
                           x_1 \\
                           x_2  \\
                           \vdots  \\
                           x_n
                         \end{array}
                       \right)\\
& \quad - (x_1,x_2,\cdots, x_n)M^i\left(
                         \begin{array}{c}
                           (x_1,x_2,\cdots,x_n)M^1 \\
                           (x_1,x_2,\cdots,x_n)M^2  \\
                           \vdots  \\
                           (x_1,x_2,\cdots,x_n)M^n
                         \end{array}
                       \right)\left(
                         \begin{array}{c}
                           x_1 \\
                           x_2  \\
                           \vdots  \\
                           x_n
                         \end{array}
                       \right)\\
&=(x_1,x_2,\cdots,x_n)(\sum\limits_{j=1}^nc_j^1x_j,\sum\limits_{j=1}^nc_j^2x_j,\cdots,\sum\limits_{j=1}^nc_j^nx_j)\left(
                         \begin{array}{c}
                           r^i_1 \\
                           r^i_2  \\
                           \vdots  \\
                           r^i_n
                         \end{array}
                       \right)\left(
                         \begin{array}{c}
                           x_1 \\
                           x_2  \\
                           \vdots  \\
                           x_n
                         \end{array}
                       \right)\\
&\quad -(x_1,x_2,\cdots,x_n)(c^i_1,c^i_2,\cdots,c_n^i)\left(
                         \begin{array}{c}
                           \sum\limits_{j=1}^nx_jr_j^1 \\
                           \sum\limits_{j=1}^nx_jr_j^2  \\
                           \vdots  \\
                           \sum\limits_{j=1}^nx_jr_j^n
                         \end{array}
                       \right)\left(
                         \begin{array}{c}
                           x_1 \\
                           x_2  \\
                           \vdots  \\
                           x_n
                         \end{array}
                       \right)\\
&=(x_1,x_2,\cdots,x_n)[\sum\limits_{k=1}^n\sum\limits_{j=1}^nc_j^kx_jr_k^i-\sum\limits_{k=1}^n\sum\limits_{j=1}^nc_k^ix_jr_j^k]\left(
                         \begin{array}{c}
                           x_1 \\
                           x_2  \\
                           \vdots  \\
                           x_n
                         \end{array}
                       \right)\\
&=(x_1,x_2,\cdots,x_n)[\sum\limits_{j=1}^n\sum\limits_{k=1}^n(c_j^kr_k^i-c_k^ir_j^k)x_j]\left(
                         \begin{array}{c}
                           x_1 \\
                           x_2  \\
                           \vdots  \\
                           x_n
                         \end{array}
                       \right), \forall i\in\{1,2,\cdots, n\}.
\end{align*}
By Lemma \ref{basiclem}, we have $$\sum\limits_{k=1}^n(c_j^kr_k^i-c_k^ir_j^k)=(0)_{n\times n}, \forall i, j\in \{1,2,\cdots,n\}.$$ So $(M^1,M^2,\cdots, M^n)$ is a crisscross ordered $n$-tuple of $n\times n$ matrixes.

Conversely, if $(M^1,M^2,\cdots, M^n)$ is a crisscross ordered $n$-tuple of $n\times n$ matrixes with each
\begin{align*}
M^i=(c^i_1,c^i_2,\cdots,c^i_n)=\left(
                         \begin{array}{c}
                           r^i_1 \\
                           r^i_2  \\
                           \vdots  \\
                           r^i_n
                         \end{array}
                       \right),
\end{align*} then we have $$\sum\limits_{k=1}^n(c_j^kr_k^i-c_k^ir_j^k)=(0)_{n\times n}, \forall i, j\in \{1,2,\cdots,n\}.$$
We can define a differential $\partial$ on $\k\langle x_1,x_2,\cdots,x_n\rangle$ by
$$\partial(x_i)= (x_1,x_2,\cdots, x_n)M^i\left(
                         \begin{array}{c}
                           x_1 \\
                           x_2  \\
                           \vdots  \\
                           x_n
                         \end{array}
                       \right), i=1,2,\cdots, n,$$
such that $(\k\langle x_1,x_2,\cdots,x_n\rangle,\partial)$ is a cochain DG algebra, since one can check as above that $$
\partial \circ \partial (x_i) = 0, \forall i\in\{1,2,\cdots,n\},
$$
if $\partial$ satisfies the
Leibniz rule.

\end{proof}
\begin{rem}\label{onetoone}
 Theorem \ref{diffstr} indicates that there is a one to one in correspondence between
                        $$\{\mathcal{A}|\mathcal{A} \,\,\text{is a DG free algebra with} \,\,\mathcal{A}^{\#}=\k\langle x_1,x_2,\cdots, x_n\rangle \}$$
and the set
$$\{(M^1,\cdots, M^n)|(M^1,\cdots, M^n) \,\, \text{is a crisscross ordered}\,\, n\text{-tuple},\,M^i\in M_n(\k)\}.$$
\end{rem}

\section{Isomorphism problems for DG free algebras}
Remark \ref{onetoone} implies that DG free algebras have abundant differential structures. For future systematically studies, one should consider the isomorphism problem of DG free algebras first, since two isomorphic DG free algebras have same homological properties.
 A successful classification work on the isomorphism classes of DG free algebras will efficiently simplify our research.
 We have the following theorem.
\begin{thm}\label{isom}
Let $\mathcal{A}$ and $\mathcal{B}$ be two DG free algebras such that $$ \mathcal{A}^{\#}=\k\langle x_1,x_2,\cdots, x_n\rangle, \quad \mathcal{B}^{\#}=\k\langle y_1,y_2,\cdots, y_n\rangle, $$
with each $|x_i|=|y_i|=1$. By Theorem \ref{diffstr}, $\partial_{\mathcal{A}}$ and $\partial_{\mathcal{B}}$ are defined by crisscrossed $n\times n$ matrixes $M^1, M^2,\cdots, M^n$ and $N^1,N^2,\cdots, N^n$  respectively. Then $\mathcal{A}\cong \mathcal{B}$ if and only if there exists $A=(a_{ij})_{n\times n}\in \mathrm{GL}_n(\k)$ such that
$$(a_{ij}E_n)_{n^2\times n^2}\left(
                         \begin{array}{c}
                           N_1 \\
                           N_2  \\
                           \vdots  \\
                           N_n
                         \end{array}
                       \right)=\left(
                         \begin{array}{c}
                           A^TM^1A \\
                           A^TM^2A  \\
                           \vdots  \\
                           A^TM^nA
                         \end{array}
                       \right). $$
\end{thm}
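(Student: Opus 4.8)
The plan is to reduce a DG-algebra isomorphism to linear data on the degree-one generators and then read off the asserted matrix identity. Since $\mathcal{A}^{\#}$ and $\mathcal{B}^{\#}$ are free graded algebras on generators concentrated in degree $1$, any graded algebra homomorphism $f\colon \mathcal{A}\to\mathcal{B}$ is uniquely determined by the images $f(x_i)$, which are forced to be degree-one elements, hence $\k$-linear combinations $f(x_i)=\sum_{j=1}^n a_{ij}y_j$. Writing $A=(a_{ij})_{n\times n}$ and $\mathbf{x}=(x_1,\dots,x_n)^T$, $\mathbf{y}=(y_1,\dots,y_n)^T$, this says $f(\mathbf{x})=A\mathbf{y}$. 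By freeness of the target, any choice of $A$ extends uniquely to a graded homomorphism, and $f$ is an isomorphism exactly when its restriction to the ($n$-dimensional) degree-one parts is bijective, i.e. when $A\in\mathrm{GL}_n(\k)$. So the first step is to record the bijection between graded isomorphisms $\mathcal{A}\cong\mathcal{B}$ and matrices $A\in\mathrm{GL}_n(\k)$.

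Next I impose the DG condition $f\circ\partial_{\mathcal{A}}=\partial_{\mathcal{B}}\circ f$ on generators. Applying the algebra map $f$ to $\partial_{\mathcal{A}}(x_i)=(x_1,\dots,x_n)M^i\mathbf{x}$ and substituting $(f(x_1),\dots,f(x_n))=(y_1,\dots,y_n)A^T$ together with $f(\mathbf{x})=A\mathbf{y}$ gives
$$f(\partial_{\mathcal{A}}(x_i))=(y_1,\dots,y_n)\,A^TM^iA\,\mathbf{y},$$
whereas expanding $\partial_{\mathcal{B}}(f(x_i))=\sum_j a_{ij}\partial_{\mathcal{B}}(y_j)$ yields
$$\partial_{\mathcal{B}}(f(x_i))=(y_1,\dots,y_n)\Big(\textstyle\sum_{j=1}^n a_{ij}N^j\Big)\mathbf{y}.$$
These two degree-two elements of $\k\langle y_1,\dots,y_n\rangle$ agree if and only if their coefficient matrices coincide, since the $n^2$ monomials $y_ky_l$ are linearly independent (the quadratic analogue of Lemma \ref{basiclem}). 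Hence the chain-map condition on generators is equivalent to $\sum_{j=1}^n a_{ij}N^j=A^TM^iA$ for all $i$, which is precisely the stacked block identity $(a_{ij}E_n)_{n^2\times n^2}(N^1,\dots,N^n)^T=(A^TM^1A,\dots,A^TM^nA)^T$ of the statement.

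It remains to promote the generator-level identity to a genuine DG isomorphism and to handle the converse. For the forward direction I take $A$ from the graded isomorphism underlying $\mathcal{A}\cong\mathcal{B}$, and the computation above shows it satisfies the equation. For the converse, given such an $A\in\mathrm{GL}_n(\k)$ I define $f$ on generators by $f(x_i)=\sum_j a_{ij}y_j$ and extend it to the graded isomorphism $\mathcal{A}^{\#}\cong\mathcal{B}^{\#}$ induced by $A$. The only point needing argument is that agreement of $f\circ\partial_{\mathcal{A}}$ and $\partial_{\mathcal{B}}\circ f$ on generators forces agreement on all of $\mathcal{A}$. For this I note that, using the Leibniz rules for $\partial_{\mathcal{A}},\partial_{\mathcal{B}}$ and $|f(a)|=|a|$, both maps are $f$-twisted derivations, i.e. each $D\in\{f\partial_{\mathcal{A}},\partial_{\mathcal{B}}f\}$ satisfies $D(ab)=D(a)f(b)+(-1)^{|a|}f(a)D(b)$. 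Their difference is then an $f$-twisted derivation vanishing on $\k$ and on the algebra generators, hence vanishes identically by induction on word length, so $f$ is a DG isomorphism.

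The conceptual ingredients (freeness giving linear generators, independence of quadratic monomials, and the twisted-derivation extension) are all routine; the step that actually requires an argument is this last extension, and the genuinely delicate bookkeeping is the transpose/index convention. One must fix the convention $f(x_i)=\sum_j a_{ij}y_j$ so that $f(\mathbf{x}^T)M^if(\mathbf{x})$ produces $A^TM^iA$ rather than $A M^i A^T$, and then check that the block multiplication by $(a_{ij}E_n)$ reproduces exactly the row sums $\sum_j a_{ij}N^j$; getting this matrix shape to match the statement verbatim is the main thing to be careful about.
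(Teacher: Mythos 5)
Your proposal is correct and follows essentially the same route as the paper: encode the degree-one part of the isomorphism by a matrix $A\in\mathrm{GL}_n(\k)$, expand $f\circ\partial_{\mathcal{A}}(x_i)$ and $\partial_{\mathcal{B}}\circ f(x_i)$ as quadratic forms in the $y_j$, and compare coefficients of the linearly independent monomials $y_sy_t$ to obtain $\sum_j a_{ij}N^j=A^TM^iA$. The only difference is that you explicitly justify, via the $f$-twisted derivation argument, why agreement of the chain-map condition on generators suffices in the converse direction — a routine point the paper leaves implicit.
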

\begin{proof}
If the DG
algebras $\mathcal{A}\cong \mathcal{B}$, then there exists an
isomorphism $f: \mathcal{A}\to \mathcal{B}$ of DG algebras. Since
$f^1: \mathcal{A}^1 \to \mathcal{B}^1$ is a $\k$-linear
isomorphism, we may let $$\left(\begin{array}{c}
                           f(x_1) \\
                           f(x_2) \\
                           \vdots \\
                           f(x_n)
                         \end{array}
                       \right)= A\left(
                         \begin{array}{c}
                           y_1\\
                           y_2 \\
                           \vdots \\
                           y_n
                         \end{array}
                       \right)$$
for some  $A=(a_{ij})_{n\times n} \in \mathrm{GL}_n(\k)$.
Since $f$ is a chain map, we have $f\circ \partial_{\mathcal{A}}=\partial_{\mathcal{B}}\circ f$.
For any $i\in \{1,2,\cdots,n\}$, we have
\begin{align*}\label{Eq1}\tag{Eq1}
&\quad\quad\quad \partial_{\mathcal{B}}\circ f(x_i)=\partial_{\mathcal{B}}(\sum\limits_{j=1}^na_{ij}y_j)\\
&=\sum\limits_{j=1}^na_{ij}[ (y_1,y_2,\cdots, y_n)N^j\left(
                         \begin{array}{c}
                           y_1 \\
                           y_2  \\
                           \vdots  \\
                           y_n
                         \end{array}
                       \right)]\\
&=\sum\limits_{j=1}^na_{ij}[\sum\limits_{s=1}^n\sum\limits_{t=1}^n n_{st}^jy_sy_t]\\
&=\sum\limits_{j=1}^n\sum\limits_{s=1}^n\sum\limits_{t=1}^na_{ij}n_{st}^jy_sy_t \\
&=\sum\limits_{s=1}^n\sum\limits_{t=1}^n\sum\limits_{j=1}^na_{ij}n_{st}^jy_sy_t
\end{align*}
and
\begin{align*}\label{Eq2}\tag{Eq2}
&\quad \quad\quad\quad f\circ \partial_{\mathcal{A}}(x_i)=f[(x_1,x_2,\cdots, x_n)M^i\left(
                         \begin{array}{c}
                           x_1 \\
                           x_2  \\
                           \vdots  \\
                           x_n
                         \end{array}
                       \right)]\\
             & \quad \quad\quad\quad\quad \quad\quad\quad\quad\quad = f[\sum\limits_{k=1}^n\sum\limits_{l=1}^nm^i_{kl}x_kx_l]\\
             & \quad \quad\quad\quad\quad \quad\quad\quad\quad\quad = \sum\limits_{k=1}^n\sum\limits_{l=1}^nm^i_{kl}f(x_k)f(x_l)\\
             &\quad\quad\quad\quad\quad\quad\quad\quad=\sum\limits_{k=1}^n\sum\limits_{l=1}^nm^i_{kl}(\sum\limits_{s=1}^na_{ks}y_s)(\sum\limits_{t=1}^na_{lt}y_t) \\
             &\quad\quad\quad\quad\quad\quad\quad\quad=\sum\limits_{s=1}^n\sum\limits_{t=1}^n\sum\limits_{k=1}^n\sum\limits_{l=1}^nm^i_{kl}a_{ks}a_{lt}y_sy_t \\
             &\quad\quad\quad\quad\quad\quad\quad\quad=\sum\limits_{s=1}^n\sum\limits_{t=1}^n\sum\limits_{k=1}^n\sum\limits_{l=1}^na_{ks}m^i_{kl}a_{lt}y_sy_t.
\end{align*}
So $ \partial_{\mathcal{B}}\circ f(x_i)=f\circ \partial_{\mathcal{A}}(x_i)$ implies that $$\sum\limits_{j=1}^na_{ij}n_{st}^j=\sum\limits_{k=1}^n\sum\limits_{l=1}^na_{ks}m^i_{kl}a_{lt}, \forall i,s,t\in \{1,2,\cdots, n\}.$$ Then we have
$\sum\limits_{j=1}^na_{ij}N^j=A^TM^iA$, for any $i=1,2,\cdots, n$. Therefore, $$(a_{ij}E_n)_{n^2\times n^2}\left(
                         \begin{array}{c}
                           N_1 \\
                           N_2  \\
                           \vdots  \\
                           N_n
                         \end{array}
                       \right)=\left(
                         \begin{array}{c}
                           A^TM^1A \\
                           A^TM^2A  \\
                           \vdots  \\
                           A^TM^nA
                         \end{array}
                       \right). $$

Conversely, if there exists $A=(a_{ij})_{n\times n}\in \mathrm{GL}_n(\k)$ such that
$$(a_{ij}E_n)_{n^2\times n^2}\left(
                         \begin{array}{c}
                           N_1 \\
                           N_2  \\
                           \vdots  \\
                           N_n
                         \end{array}
                       \right)=\left(
                         \begin{array}{c}
                           A^TM^1A \\
                           A^TM^2A  \\
                           \vdots  \\
                           A^TM^nA
                         \end{array}
                       \right),$$  we should show that $\mathcal{A}\cong\mathcal{B}$. Define a $\k$-linear
map $f:\mathcal{A}^1\to \mathcal{B}^1$ by $$\left(\begin{array}{c}
                           f(x_1) \\
                           f(x_2) \\
                           \vdots \\
                           f(x_n)
                         \end{array}
                       \right)= A\left(
                         \begin{array}{c}
                           y_1\\
                           y_2 \\
                           \vdots \\
                           y_n
                         \end{array}
                       \right).$$
                       Obviously, $f$ is invertible since $A\in \mathrm{GL}_n(\k)$. Extend $f$  to a morphism of graded algebras between $\mathcal{A}^{\#}$ and $\mathcal{B}^{\#}$. We still denote it by $f$. For any $i\in \{1,2,\cdots,n\}$, we still have (\ref{Eq1}) and (\ref{Eq2}).  Since $$(a_{ij}E_n)_{n^2\times n^2}\left(
                         \begin{array}{c}
                           N^1 \\
                           N^2  \\
                           \vdots  \\
                           N^n
                         \end{array}
                       \right)=\left(
                         \begin{array}{c}
                           A^TM^1A \\
                           A^TM^2A  \\
                           \vdots  \\
                           A^TM^nA
                         \end{array}
                       \right),$$ we have  $f\circ \partial_{\mathcal{A}}(x_i)=\partial_{\mathcal{B}}\circ f(x_i)$, for any $i\in \{1,2,\cdots, n\}$. So $f$ is an isomorphism of DG algebras.
\end{proof}

\begin{cor}\label{judge}
Let $\mathcal{A}$ and $\mathcal{B}$ be two DG free algebras such that $$ \mathcal{A}^{\#}=\k\langle x_1,x_2,\cdots, x_n\rangle, \quad \mathcal{B}^{\#}=\k\langle y_1,y_2,\cdots, y_n\rangle, $$
with each $|x_i|=|y_i|=1$. If $A\cong B$ and $\partial_{\mathcal{A}}$ and $\partial_{\mathcal{B}}$ are defined  respectively by  two crisscross ordered $n$-tuples  $(M^1, M^2,\cdots, M^n)$ and $(N^1,N^2,\cdots, N^n)$ of $n\times n$ matrixes, then
\begin{enumerate}
\item  $r\left(
                         \begin{array}{c}
                           M^1 \\
                           M^2  \\
                           \vdots  \\
                           M^n
                         \end{array}
                       \right)=r\left(
                         \begin{array}{c}
                           N^1 \\
                           N^2  \\
                           \vdots  \\
                           N^n
                         \end{array}
                       \right)$;\\
\item  $N^1, N^2,\cdots, N^n$ are symmetric matrixes whenever $M^1, M^2,\cdots, M^n$ are symmetric matrixes.
\end{enumerate}
\end{cor}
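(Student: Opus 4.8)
The plan is to derive both assertions directly from the matrix identity supplied by Theorem \ref{isom}. Since $\mathcal{A}\cong\mathcal{B}$, there is some $A=(a_{ij})_{n\times n}\in\mathrm{GL}_n(\k)$ with
$$(a_{ij}E_n)_{n^2\times n^2}\left(\begin{array}{c}N^1\\N^2\\\vdots\\N^n\end{array}\right)=\left(\begin{array}{c}A^TM^1A\\A^TM^2A\\\vdots\\A^TM^nA\end{array}\right).$$
The first thing I would record is that the block matrix $(a_{ij}E_n)_{n^2\times n^2}$ is precisely the Kronecker product $A\otimes E_n$, which is invertible since $\det(A\otimes E_n)=(\det A)^n\neq 0$, and that $E_n\otimes A^T$ is invertible for the same reason. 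Writing $\mathcal{M}$ and $\mathcal{N}$ for the $n^2\times n$ stacked matrices with blocks $M^1,\dots,M^n$ and $N^1,\dots,N^n$, the right-hand side is $(E_n\otimes A^T)\mathcal{M}A$, so the whole identity reads
$$(A\otimes E_n)\mathcal{N}=(E_n\otimes A^T)\mathcal{M}A.$$

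For part (1) I would solve this for $\mathcal{N}$, using $(A\otimes E_n)^{-1}(E_n\otimes A^T)=A^{-1}\otimes A^T$, to obtain
$$\mathcal{N}=(A^{-1}\otimes A^T)\,\mathcal{M}\,A.$$
Here $A^{-1}\otimes A^T$ is an invertible $n^2\times n^2$ matrix and $A$ is an invertible $n\times n$ matrix, so $\mathcal{N}$ is obtained from $\mathcal{M}$ by invertible left and right multiplication; such operations preserve rank, giving $r(\mathcal{N})=r(\mathcal{M})$, which is exactly the desired equality.

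For part (2) I would instead read the same identity row by row as $\sum_{j=1}^n a_{ij}N^j=A^TM^iA$ for each $i$. If every $M^i$ is symmetric then each right-hand side is symmetric, because $(A^TM^iA)^T=A^T(M^i)^TA=A^TM^iA$. Inverting $A$ in the stacked system (equivalently multiplying by $A^{-1}\otimes E_n$) expresses each $N^j$ as $N^j=\sum_{i=1}^n(A^{-1})_{ji}(A^TM^iA)$, a $\k$-linear combination of symmetric matrices, hence symmetric. I do not anticipate a genuine obstacle here: the only point that needs care is the bookkeeping identifying $(a_{ij}E_n)_{n^2\times n^2}$ with $A\otimes E_n$ and rewriting the right-hand side as $(E_n\otimes A^T)\mathcal{M}A$, after which both conclusions follow from invariance of rank under invertible multiplication and closure of symmetric matrices under linear combinations. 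If one prefers to avoid Kronecker products altogether, the same scalar relations $\sum_j a_{ij}N^j=A^TM^iA$ can be inverted directly using $A\in\mathrm{GL}_n(\k)$ to yield the formula for $N^j$ above, which settles both parts simultaneously.
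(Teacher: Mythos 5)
Your proposal is correct and follows essentially the same route as the paper's proof: you factor the right-hand side as a block-diagonal invertible matrix times the stacked $M^i$'s times $A$, use invertibility of $(a_{ij}E_n)_{n^2\times n^2}$ to conclude rank equality, and express each $N^j$ as a $\k$-linear combination of the symmetric matrices $A^TM^iA$ for part (2). The only cosmetic difference is that you package the block matrices as Kronecker products $A\otimes E_n$ and $E_n\otimes A^T$, whereas the paper verifies invertibility directly by exhibiting $(b_{ij}E_n)_{n^2\times n^2}$ as the inverse, where $A^{-1}=(b_{ij})$.
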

\begin{proof}
(1) By Theorem \ref{isom}, there exists $A=(a_{ij})_{n\times n}\in \mathrm{GL}_n(\k)$ such that
\begin{align}\label{isocond}(a_{ij}E_n)_{n^2\times n^2}\left(
                         \begin{array}{c}
                           N_1 \\
                           N_2  \\
                           \vdots  \\
                           N_n
                         \end{array}
                       \right)=\left(
                         \begin{array}{c}
                           A^TM^1A \\
                           A^TM^2A  \\
                           \vdots  \\
                           A^TM^nA
                         \end{array}
                       \right)
                       \end{align}
                         since $\mathcal{A}\cong \mathcal{B}$.  We have
                       $$\left(
                         \begin{array}{c}
                           A^TM^1A \\
                           A^TM^2A  \\
                           \vdots  \\
                           A^TM^nA
                         \end{array}
                       \right)=\left(
                                 \begin{array}{cccc}
                                   A^T & 0_{n\times n} & \cdots & 0_{n\times n} \\
                                   0_{n\times n} & A^T & \cdots & 0_{n\times n} \\
                                   \vdots & \vdots & \ddots & \vdots  \\
                                   0_{n\times n} & 0_{n\times n} & \cdots & A^T \\
                                 \end{array}
                               \right)\left(
                         \begin{array}{c}
                           M^1 \\
                           M^2  \\
                           \vdots  \\
                           M^n
                         \end{array}
                       \right)A.
                           $$
                       Since $A\in \mathrm{GL}_n(\k)$ and $$\left(
                                 \begin{array}{cccc}
                                   A^T & 0_{n\times n} & \cdots & 0_{n\times n} \\
                                   0_{n\times n} & A^T & \cdots & 0_{n\times n} \\
                                   \vdots & \vdots & \ddots & \vdots  \\
                                   0_{n\times n} & 0_{n\times n} & \cdots & A^T \\
                                 \end{array}
                               \right)\in \mathrm{GL}_{n^2}(\k),$$ we have
                               $$r\left(
                         \begin{array}{c}
                           A^TM^1A \\
                           A^TM^2A  \\
                           \vdots  \\
                           A^TM^nA
                         \end{array}
                       \right)=r\left(
                         \begin{array}{c}
                           M^1 \\
                           M^2  \\
                           \vdots  \\
                           M^n
                         \end{array}
                       \right).$$
On the other hand, let $A^{-1}=(b_{ij})_{n\times n}$,  then
$$(b_{ij}E_n)_{n^2\times n^2}(a_{ij}E_n)_{n^2\times n^2}=E_{n^2},$$ which implies that $(a_{ij}E_n)_{n^2\times n^2}\in \mathrm{GL}_{n^2}(\k)$ and hence
$$r\left(
                         \begin{array}{c}
                           N_1 \\
                           N_2  \\
                           \vdots  \\
                           N_n
                         \end{array}
                       \right)=r\left(
                         \begin{array}{c}
                           A^TM^1A \\
                           A^TM^2A  \\
                           \vdots  \\
                           A^TM^nA
                         \end{array}
                       \right)$$
                       by (\ref{isocond}). Therefore, $r\left(
                         \begin{array}{c}
                           M^1 \\
                           M^2  \\
                           \vdots  \\
                           M^n
                         \end{array}
                       \right)=r\left(
                         \begin{array}{c}
                           N^1 \\
                           N^2  \\
                           \vdots  \\
                           N^n
                         \end{array}
                       \right).$

(2)By (\ref{isocond}), we have \begin{align*}\left(
                         \begin{array}{c}
                           N_1 \\
                           N_2  \\
                           \vdots  \\
                           N_n
                         \end{array}
                       \right)&=(b_{ij}E_n)_{n^2\times n^2}\left(
                         \begin{array}{c}
                           A^TM^1A \\
                           A^TM^2A  \\
                           \vdots  \\
                           A^TM^nA
                         \end{array}
                       \right) \\
                       &=\left(
                         \begin{array}{c}
                          \sum\limits_{j=1}^n b_{1j}A^TM^jA \\
                           \sum\limits_{j=1}^nb_{2j}A^TM^jA  \\
                           \vdots  \\
                          \sum\limits_{j=1}^nb_{nj}A^TM^jA
                         \end{array}
                       \right).
                       \end{align*}
Hence, $N^i=\sum\limits_{j=1}^n b_{ij}A^TM^jA$ and $N^i$ is a symmetric matrix when $M^1,M^2,\cdots, M^n$ are symmetric matrixes, $i=1,2,\cdots, n$.
\end{proof}

\begin{cor}
Let $\mathcal{A}$ be the DG free algebra such that $ \mathcal{A}^{\#}=\k\langle x_1,x_2,\cdots, x_n\rangle$ with $|x_i|=1, i=1,2,\cdots, n$. Assume that $\partial_{\mathcal{A}}$ is determined by a crisscross ordered $n$-tuple $(M^1,M^2,\cdots, M^n)$ of $n\times n$ matrixes. Then
\begin{align*}
\mathrm{Aut}_{dg}(\mathcal{A})=\{A=(a_{ij})_{n\times n}\in \mathrm{GL}_n(\k)|(a_{ij}E_n)_{n^2\times n^2}\left(
                         \begin{array}{c}
                           M^1 \\
                           M^2  \\
                           \vdots  \\
                           M^n
                         \end{array}
                       \right)=\left(
                         \begin{array}{c}
                           A^TM^1A \\
                           A^TM^2A  \\
                           \vdots  \\
                           A^TM^nA
                         \end{array}
                       \right) \}.
\end{align*}
\end{cor}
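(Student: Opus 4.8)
The plan is to recognize this corollary as the special case of Theorem \ref{isom} in which $\mathcal{B}=\mathcal{A}$, so that $y_i=x_i$ and $N^j=M^j$ for all $j$. First I would recall that an element of $\mathrm{Aut}_{dg}(\mathcal{A})$ is by definition a DG algebra isomorphism $f\colon\mathcal{A}\to\mathcal{A}$, that is, an invertible morphism of graded algebras commuting with $\partial_{\mathcal{A}}$. Hence $\mathrm{Aut}_{dg}(\mathcal{A})$ is precisely the set of DG self-isomorphisms of $\mathcal{A}$, and Theorem \ref{isom} applies verbatim with $\mathcal{B}=\mathcal{A}$ and $N^j=M^j$.

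The key observation is that such an $f$ is completely determined by its restriction $f^1\colon\mathcal{A}^1\to\mathcal{A}^1$. Since $\mathcal{A}^1$ has basis $x_1,x_2,\cdots,x_n$, the map $f^1$ is encoded by a matrix $A=(a_{ij})_{n\times n}$ exactly as in the proof of Theorem \ref{isom}, and $f^1$ is invertible precisely when $A\in\mathrm{GL}_n(\k)$. Because $\mathcal{A}^{\#}$ is the free algebra $\k\langle x_1,x_2,\cdots,x_n\rangle$, any such $A$ extends uniquely to a graded algebra endomorphism of $\mathcal{A}^{\#}$, which is an automorphism exactly when $A$ is invertible. This gives a bijection between the graded algebra automorphisms of $\mathcal{A}^{\#}$ and $\mathrm{GL}_n(\k)$.

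Next I would impose the chain-map condition. Specializing the two computations carried out in the proof of Theorem \ref{isom} to the case $N^j=M^j$ and $y_i=x_i$, the equality $f\circ\partial_{\mathcal{A}}=\partial_{\mathcal{A}}\circ f$ evaluated on each generator $x_i$ is equivalent to $\sum_{j=1}^n a_{ij}M^j=A^TM^iA$ for all $i\in\{1,2,\cdots,n\}$, which in block form is exactly the displayed matrix equation $(a_{ij}E_n)_{n^2\times n^2}(M^1,\cdots,M^n)^{T}=(A^TM^1A,\cdots,A^TM^nA)^{T}$. Conversely, whenever $A\in\mathrm{GL}_n(\k)$ satisfies this equation, the graded automorphism it induces commutes with $\partial_{\mathcal{A}}$ and therefore lies in $\mathrm{Aut}_{dg}(\mathcal{A})$. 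Combining these equivalences yields the asserted description of $\mathrm{Aut}_{dg}(\mathcal{A})$.

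Since every step is a direct specialization of Theorem \ref{isom}, I do not expect any genuine obstacle; the only point requiring care is the bookkeeping that a DG self-isomorphism is both determined by and determines the single matrix $A$, so that the assignment $f\mapsto A$ is a well-defined bijection onto the matrices satisfying the equation. This is guaranteed by the freeness of $\mathcal{A}^{\#}$, which makes a graded algebra morphism unique once its values on the generators $x_1,x_2,\cdots,x_n$ are fixed.
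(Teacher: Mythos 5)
Your proposal is correct and follows exactly the route the paper intends: the corollary is stated without proof as an immediate specialization of Theorem \ref{isom} to $\mathcal{B}=\mathcal{A}$, $N^j=M^j$, and your derivation of the chain-map condition $\sum_{j=1}^n a_{ij}M^j=A^TM^iA$ from the two displayed computations (Eq1) and (Eq2) is precisely what that specialization yields. Your added remark that freeness of $\mathcal{A}^{\#}$ makes the assignment $f\mapsto A$ a well-defined bijection is the only point needing care, and you handle it correctly.
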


\section{isomorphism classes of dg free algebras with two generators }
By Remark \ref{onetoone}, the set of crisscross ordered $2$-tuples of $2\times 2$ matrixes
are in one to one correspondence with the set of DG free algebras with two degree one generators.
Hence we should describe all crisscross ordered $2$-tuples of $2\times 2$ matrixes first in order to
figure out the isomorphism classes of DG free algebras with two generators. For this, let $M^1$ and $M^2$ be two $2\times 2$ matrixes such that
\begin{align*}
M^1&=(c_1^1,c_2^1)=\left(
                         \begin{array}{c}
                           r^1_1 \\
                           r^1_2
                         \end{array}
                       \right)=\left(
                                 \begin{array}{cc}
                                   m_{11}^1 & m_{12}^1 \\
                                   m_{21}^1 & m_{22}^1
                                 \end{array}
                               \right),  \\
                        M^2& =(c_1^2,c_2^2)=\left(
                         \begin{array}{c}
                           r^2_1 \\
                           r^2_2
                         \end{array}
                       \right)=\left(
                                 \begin{array}{cc}
                                   m_{11}^2 & m_{12}^2 \\
                                   m_{21}^2 & m_{22}^2
                                 \end{array}
                               \right).
                       \end{align*}
                       By Definition \ref{crisscrossed}, $(M_1,M_2)$ is a  crisscross ordered $2$-tuple of $2\times 2$ matrixes if and only if
                       \begin{align*}
\begin{cases}
c_1^2r_2^1-c_2^1r_1^2=(0)_{2\times 2} & (1)\\
c_2^1r_1^1-c_1^1r_2^1+c_2^2r_2^1-c_2^1r_2^2=(0)_{2\times 2} & (2) \\
c_1^1r_1^2-c_1^2r_1^1+c_1^2r_2^2-c_2^2r_1^2=(0)_{2\times 2} & (3),\\
\end{cases}
\end{align*}
if and only if
\begin{align}\label{twocases}
\begin{cases}
m_{11}^2m_{21}^1-m_{12}^1m_{11}^2=0 & (1.1)\\
m_{11}^2m_{22}^1-m_{12}^1m_{12}^2=0 & (1.2)\\
m_{21}^2m_{21}^1-m_{22}^1m_{11}^2=0 & (1.3)\\
m_{21}^2m_{22}^1-m_{22}^1m_{12}^2=0 & (1.4)\\
m_{12}^1m_{11}^1-m_{11}^1m_{21}^1+m_{12}^2m_{21}^1-m_{12}^1m_{21}^2=0 & (2.1)\\
(m_{12}^1)^2-m_{11}^1m_{22}^1+m_{12}^2m_{22}^1-m_{12}^1m_{22}^2=0 &(2.2) \\
m_{22}^1m_{11}^1-(m_{21}^1)^2+m_{21}^1m_{22}^2-m_{22}^1m_{21}^2=0 &(2.3)\\
m_{22}^1m_{12}^1-m_{21}^1m_{22}^1=0     &(2.4)\\
m_{11}^2m_{21}^2-m_{12}^2m_{11}^2=0  &(3.1)\\
m_{11}^1m_{12}^2-m_{11}^2m_{12}^1+m_{11}^2m_{22}^2-(m_{12}^2)^2=0 &(3.2)\\
m_{21}^1m_{11}^2-m_{21}^2m_{11}^1+(m_{21}^2)^2-m_{22}^2m_{11}^2=0 &(3.3)\\
m_{21}^1m_{12}^2-m_{21}^2m_{12}^1+m_{21}^2m_{22}^2-m_{22}^2m_{12}^2=0 &(3.4),
\end{cases}
\end{align}
where equations $(i.1),(i.2),(i.3),(i.4)$ are equivalent to the equation $(i)$, $i=1,2,3$.
When $m_{11}^2=m_{22}^1=0$, the equations (\ref{twocases}) are equivalent to \begin{align}\label{twozero}
\begin{cases}
0=m_{12}^1m_{12}^2=m_{21}^2m_{21}^1 \\
m_{12}^1(m_{11}^1-m_{21}^2)-m_{21}^1(m_{11}^1-m_{12}^2)=0 \\
m_{12}^1(m_{12}^1-m_{22}^2)=0  \\
m_{21}^1(m_{22}^2-m_{21}^1)=0 \\
m_{12}^2(m_{11}^1-m_{12}^2)=0 \\
m_{21}^2(m_{11}^1-m_{21}^2)=0 \\
m_{12}^2(m_{21}^1-m_{22}^2)-m_{21}^2(m_{12}^1-m_{22}^2)=0.
\end{cases}
\end{align}
By computations, we have the following classification when $m_{22}^1=m_{11}^2=0$.\\
\begin{tabular}{|l|l|l|l|}
  \hline
  Cases for $m_{22}^1=m_{11}^2=0$ & $M^1$ & $M^2$ & Parameters \\  \hline
  $1.\begin{cases}
  m_{12}^1= 0, m_{21}^1= 0 \\
  m_{12}^2\neq 0, m_{21}^2\neq 0
  \end{cases}$ & $\left(
                                       \begin{array}{cc}
                                         \mu &  0 \\
                                         0 & 0\\
                                       \end{array}
                                     \right)$
    & $\left(
        \begin{array}{cc}
          0 & \mu \\
          \mu & \lambda \\
        \end{array}
      \right) $
     & $\mu  \in \k^{\times}, \lambda\in \k$  \\
  \hline
    $2.\begin{cases}
  m_{12}^1= 0, m_{21}^1= 0,\\
  m_{12}^2=0, m_{21}^2\neq 0
  \end{cases}$ & $\left(
                                       \begin{array}{cc}
                                         \nu &  0 \\
                                         0 & 0\\
                                       \end{array}
                                     \right)$
    & $\left(
        \begin{array}{cc}
          0 & 0\\
          \nu & 0 \\
        \end{array}
      \right) $
     & $\nu  \in \k^{\times}$  \\
     \hline
       $3. \begin{cases}
  m_{12}^1= 0, m_{21}^1= 0,\\
  m_{12}^2\neq 0, m_{21}^2= 0
  \end{cases}$ & $\left(
                                       \begin{array}{cc}
                                         \nu &  0 \\
                                         0 & 0\\
                                       \end{array}
                                     \right)$
    & $\left(
        \begin{array}{cc}
          0 & \nu \\
          0 & 0 \\
        \end{array}
      \right) $
     & $\nu  \in \k^{\times}$  \\
     \hline
      $4.\begin{cases}
  m_{12}^1= 0, m_{21}^1= 0 \\
  m_{12}^2 = 0, m_{21}^2 = 0
  \end{cases}$ & $\left(
                                       \begin{array}{cc}
                                         \lambda &  0 \\
                                         0 & 0\\
                                       \end{array}
                                     \right)$
    & $\left(
        \begin{array}{cc}
          0 & 0 \\
          0 & \mu \\
        \end{array}
      \right) $
     & $\lambda, \mu \in \k$  \\
   \hline
    $5.\begin{cases}
  m_{12}^1= 0, m_{21}^2= 0 \\
  m_{21}^1\neq 0
  \end{cases}$ & $\left(
                                       \begin{array}{cc}
                                         \nu &  0 \\
                                         \mu & 0\\
                                       \end{array}
                                     \right)$
    & $\left(
        \begin{array}{cc}
          0 & \nu \\
          0 & \mu \\
        \end{array}
      \right) $
     & $\mu  \in \k^{\times}, \nu \in \k$  \\
     \hline
      $6.\begin{cases}
  m_{12}^1= 0, m_{21}^2= 0 \\
  m_{12}^2\neq 0
  \end{cases}$ & $\left(
                                       \begin{array}{cc}
                                         \nu &  0 \\
                                         \mu & 0\\
                                       \end{array}
                                     \right)$
    & $\left(
        \begin{array}{cc}
          0 & \nu \\
          0 & \mu \\
        \end{array}
      \right) $
     & $\mu  \in \k, \nu \in \k^{\times}$  \\
     \hline
   $7.\begin{cases}
  m_{12}^2= 0, m_{21}^2= 0 \\
  m_{11}^1\neq 0,m_{12}^1\neq 0
  \end{cases}$ & $\left(
                                       \begin{array}{cc}
                                         \lambda &  \mu \\
                                         \mu & 0\\
                                       \end{array}
                                     \right)$
    & $\left(
        \begin{array}{cc}
          0 & 0 \\
          0 & \mu \\
        \end{array}
      \right) $
     & $\lambda, \mu \in \k^{\times}$  \\
     \hline
      $8.\begin{cases}
  m_{12}^2= 0, m_{21}^2= 0 \\
  m_{11}^1=0, m_{12}^1\neq 0, m_{21}^1=0
  \end{cases}$ & $\left(
                                       \begin{array}{cc}
                                         0 &  \mu \\
                                         0 & 0\\
                                       \end{array}
                                     \right)$
    & $\left(
        \begin{array}{cc}
          0 & 0 \\
          0 & \mu \\
        \end{array}
      \right) $
     & $\mu \in \k^{\times}$  \\
     \hline
      $9.\begin{cases}
  m_{12}^2= 0, m_{21}^2= 0 \\
  m_{11}^1=0, m_{12}^1\neq 0, m_{21}^1\neq 0
  \end{cases}$ & $\left(
                                       \begin{array}{cc}
                                         0 &  \nu \\
                                         \nu & 0\\
                                       \end{array}
                                     \right)$
    & $\left(
        \begin{array}{cc}
          0 & 0 \\
          0 & \nu \\
        \end{array}
      \right) $
     & $\nu \in \k^{\times}$  \\
     \hline
     $10.\begin{cases}
  m_{12}^2= 0, m_{21}^2= 0 \\
  m_{11}^1=0, m_{12}^1= 0, m_{21}^1\neq 0
  \end{cases}$ & $\left(
                                       \begin{array}{cc}
                                         0 &  0 \\
                                         \mu & 0\\
                                       \end{array}
                                     \right)$
    & $\left(
        \begin{array}{cc}
          0 & 0 \\
          0 & \mu \\
        \end{array}
      \right) $
     & $\mu \in \k^{\times}$  \\
     \hline
     $11.\begin{cases}
  m_{12}^2= 0, m_{21}^1= 0 \\
  m_{12}^1\neq 0
  \end{cases}$ & $\left(
                                       \begin{array}{cc}
                                         \nu &  \mu \\
                                         0 & 0\\
                                       \end{array}
                                     \right)$
    & $\left(
        \begin{array}{cc}
          0 & 0 \\
          \nu & \mu \\
        \end{array}
      \right) $
     & $\mu \in \k^{\times}, \nu\in \k$  \\
     \hline
      $12. \begin{cases}
  m_{12}^2= 0, m_{21}^1= 0 \\
  m_{21}^2\neq0
  \end{cases}$ & $\left(
                                       \begin{array}{cc}
                                         \nu &  \mu \\
                                         0 & 0\\
                                       \end{array}
                                     \right)$
    & $\left(
        \begin{array}{cc}
          0 & 0 \\
          \nu & \mu \\
        \end{array}
      \right) $
     & $\mu \in \k, \nu\in \k^{\times}$  \\
     \hline
  \end{tabular}

When $(m_{22}^1,m_{11}^2)\neq (0,0)$, we have the
 following classification by computations.
\begin{tabular}{|l|l|l|l|}
  \hline
  Cases & $M^1$ & $M^2$ & Parameters \\  \hline
  $13. \begin{cases}
  m_{11}^2\neq 0, \\
  m_{22}^1\neq 0
  \end{cases}$ & $\left(
                                       \begin{array}{cc}
                                         \nu+\lambda(\mu-\omega) & \mu \\
                                         \mu & \frac{\mu}{\lambda}\\
                                       \end{array}
                                     \right)$
    & $\left(
        \begin{array}{cc}
          \lambda\nu & \nu \\
          \nu & \omega \\
        \end{array}
      \right) $
     & $\mu, \nu,\lambda \in \k^{\times}, \omega\in \k$  \\
  \hline
  $14.\begin{cases}m_{11}^2=0,\\
  m_{22}^1\neq 0,\\
  m_{12}^2=0
  \end{cases}$ & $\left(
        \begin{array}{cc}
          \frac{\mu(\mu-\omega)}{\lambda} & \mu \\
          \mu & \lambda \\
        \end{array}
      \right) $& $\left(
                   \begin{array}{cc}
                     0 & 0 \\
                     0 & \omega \\
                   \end{array}
                 \right)$
       & $\lambda\in \k^{\times}, \mu,\omega \in \k$ \\
   \hline
   $15.\begin{cases}m_{11}^2=0,\\
  m_{22}^1\neq 0,\\
  m_{12}^1=0
  \end{cases}$ & $\left(
        \begin{array}{cc}
          \omega &  0\\
          0 & \lambda \\
        \end{array}
      \right) $& $\left(
                   \begin{array}{cc}
                     0 & \omega \\
                     \omega & \mu \\
                   \end{array}
                 \right)$
       & $\lambda\in \k^{\times}, \mu,\omega \in \k$ \\
   \hline
   $16. \begin{cases}m_{11}^2\neq 0,\\
  m_{22}^1= 0,\\
  m_{12}^1=0
  \end{cases}$ & $\left(
        \begin{array}{cc}
        \omega & 0 \\
          0 & 0 \\
        \end{array}
      \right) $& $\left(
                   \begin{array}{cc}
                     \lambda & \mu \\
                     \mu & \frac{\mu(\mu-\omega)}{\lambda} \\
                   \end{array}
                 \right)$
       & $\lambda\in \k^{\times}, \mu,\omega \in \k$ \\
   \hline
   $17. \begin{cases}m_{11}^2\neq 0,\\
  m_{22}^1= 0,\\
  m_{12}^2=0
  \end{cases}$ & $\left(
        \begin{array}{cc}
          \mu & \omega \\
          \omega & 0 \\
        \end{array}
      \right) $& $\left(
                   \begin{array}{cc}
                     \lambda & 0 \\
                     0 & \omega\\
                   \end{array}
                 \right)$
       & $\lambda\in \k^{\times}, \mu,\omega \in \k$ \\
   \hline
\end{tabular}
Now, lets come back to our concerned isomorphism problem.
Let $\mathcal{A}$ and $\mathcal{B}$ be two DG free algebras such that $$ \mathcal{A}^{\#}=\k\langle x_1,x_2\rangle, \quad \mathcal{B}^{\#}=\k\langle y_1,y_2 \rangle, $$
with each $|x_i|=|y_i|=1$. Assume that $\partial_{\mathcal{A}}$ and $\partial_{\mathcal{B}}$ are defined by crisscross ordered $2$-tuples $(M^1,M^2)$ and $(N^1,N^2)$ of $2\times 2$ matrixes, respectively. Let
\begin{align*}
M^i= \left(
       \begin{array}{cc}
         m^i_{11} & m^i_{12} \\
         m^i_{21} & m^i_{22} \\
       \end{array}
     \right),
    N^i= \left(
       \begin{array}{cc}
         n^i_{11} & n^i_{12} \\
         n^i_{21} & n^i_{22} \\
       \end{array}
     \right), i=1,2.
\end{align*}
 By Theorem \ref{isom}, $\mathcal{A}\cong \mathcal{B}$ if and only if there exists $A=(a_{ij})_{2\times 2}\in \mathrm{GL}_2(\k)$ such that
\begin{align}\label{eqs}
(a_{ij}E_2)_{4\times 4}\left(
                         \begin{array}{c}
                           N^1 \\
                           N^2
                         \end{array}
                       \right)=\left(
                         \begin{array}{c}
                           A^TM^1A \\
                           A^TM^2A
                         \end{array}
                       \right),
                       \end{align}
i.e.,
\begin{align*}
\begin{cases}
a_{11}n_{11}^1+a_{12}n_{11}^2 = a_{11}^2m_{11}^1+a_{11}a_{21}m_{21}^1+a_{11}a_{21}m_{12}^1+a_{21}^2m_{22}^1 \\
a_{11}n_{12}^1+a_{12}n_{12}^2 = a_{11}a_{12}m_{11}^1+a_{12}a_{21}m_{21}^1+a_{11}a_{22}m_{12}^1+a_{21}a_{22}m_{22}^1\\
 a_{11}n_{21}^1+a_{12}n_{21}^2 = a_{12}a_{11}m_{11}^1+a_{11}a_{22}m_{21}^1+a_{12}a_{21}m_{12}^1+a_{21}a_{22}m_{22}^1\\
 a_{11}n_{22}^1+a_{12}n_{22}^2 = a_{12}^2m_{11}^1+a_{12}a_{22}m_{21}^1+a_{12}a_{22}m_{12}^1+a_{22}^2m_{22}^1  \\
  a_{21}n_{11}^1+a_{22}n_{11}^2 =  a_{11}^2m_{11}^2+a_{11}a_{21}m_{21}^2+a_{11}a_{21}m_{12}^2+a_{21}^2m_{22}^2 \\
  a_{21}n_{12}^1+a_{22}n_{12}^2 = a_{11}a_{12}m_{11}^2+a_{12}a_{21}m_{21}^2+a_{11}a_{22}m_{12}^2+a_{21}a_{22}m_{22}^2\\
  a_{21}n_{21}^1+a_{22}n_{21}^2 = a_{11}a_{12}m_{11}^2+a_{11}a_{22}m_{21}^2+a_{12}a_{21}m_{12}^2+a_{21}a_{22}m_{22}^2 \\
  a_{21}n_{22}^1+a_{22}n_{22}^2 =  a_{12}^2m_{11}^2+a_{12}a_{22}m_{21}^2+a_{12}a_{22}m_{12}^2+a_{22}^2m_{22}^2.
\end{cases}
\end{align*}
If $\mathcal{A}\cong \mathcal{B}$, then by Corollary \ref{judge}, we have

$(1) r\left(
                         \begin{array}{c}
                           N^1 \\
                           N^2
                         \end{array}
                       \right)=r\left(
                         \begin{array}{c}
                           M^1 \\
                           M^2
                         \end{array}
                       \right)$;

$(2) N^1, N^2$ are symmetric matrixes whenever $M^1, M^2$ are symmetric matrixes.

\begin{prop}\label{nonsym}
Assume that $\mathcal{A}$ is a DG free algebra such that $$\mathcal{A}^{\#}=\k\langle x_1,x_2\rangle, |x_1|=|x_2|=1$$ and $\partial_{\mathcal{A}}$ is defined by a crisscross ordered $2$-tuple $(M^1, M^2)$ of $2\times 2$ matrixes. If $M^1$ and $M^2$ are not both symmetric matrixes, then $\mathcal{A}$ is isomorphic to either of the following two DG free algebras:
\begin{enumerate}
\item $\mathcal{B}_1$ whose differential $\partial_{\mathcal{B}_1}$ is defined by $ \left(
       \begin{array}{cc}
         1 & 0\\
         0 & 0 \\
       \end{array}
     \right)$ and $  \left(
       \begin{array}{cc}
         0 & 0 \\
         1 & 0\\
       \end{array}
     \right);  $ \\
\item  $\mathcal{B}_2$ whose differential $\partial_{\mathcal{B}_2}$ is defined by $ \left(
       \begin{array}{cc}
         1 & 0\\
         0 & 0 \\
       \end{array}
     \right)$ and $  \left(
       \begin{array}{cc}
         0 & 1 \\
         0 & 0\\
       \end{array}
     \right)$.
\end{enumerate}

\end{prop}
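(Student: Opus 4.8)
The plan is to extract the answer from the explicit parametrisation of crisscross ordered $2$-tuples collected in the two tables above, and then, in each relevant case, to exhibit a transition matrix realising the isomorphism through the criterion of Theorem \ref{isom}, namely the existence of $A=(a_{ij})\in \mathrm{GL}_2(\k)$ with $\sum_{j}a_{ij}N^{j}=A^TM^iA$ for $i=1,2$, where $(N^1,N^2)$ is the datum of the target algebra.

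First I would single out the cases in which $M^1$ and $M^2$ fail to be both symmetric. Inspecting the seventeen normal forms, this happens exactly in the cases $2,3,5,6,8,10,11,12$; in the remaining cases $1,4,7,9$ and $13$--$17$ both matrices are symmetric, so these are irrelevant here. Note that cases $5,6$ share a single matrix shape and so do $11,12$, so there are really only six distinct shapes to treat. Next I would invoke the rank invariant of Corollary \ref{judge}(1). A direct computation gives
\[
r\binom{M^1}{M^2}=1\ \text{ in the cases }2,8,11,12,\qquad r\binom{M^1}{M^2}=2\ \text{ in the cases }3,5,6,10 .
\]
Since the defining data of $\mathcal{B}_1$ has rank $1$ and that of $\mathcal{B}_2$ has rank $2$, Corollary \ref{judge}(1) shows at once that a rank one case can only be isomorphic to $\mathcal{B}_1$ and a rank two case only to $\mathcal{B}_2$; in particular $\mathcal{B}_1\not\cong\mathcal{B}_2$, so the two listed algebras are genuinely different targets. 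It then remains to construct the isomorphisms.

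For each shape I would produce an explicit $A\in\mathrm{GL}_2(\k)$ solving $\sum_{j}a_{ij}N^{j}=A^TM^iA$, with $(N^1,N^2)$ the datum of $\mathcal{B}_1$ or of $\mathcal{B}_2$ as dictated by the rank. For the one-parameter forms (cases $2$ and $3$) the diagonal matrix $A=\mathrm{diag}(\nu^{-1},1)$ rescales the parameter to $1$ and already does the job, sending case $2$ to $\mathcal{B}_1$ and case $3$ to $\mathcal{B}_2$. For the two-parameter forms (cases $5,6$ and $11,12$) the matrix identity, after expanding $A^TM^iA$, collapses for $i=1$ to the two linear conditions $\nu a_{11}+\mu a_{21}=1$ and $\nu a_{12}+\mu a_{22}=0$, which are solvable together with $\det A\neq 0$ precisely because $(\nu,\mu)\neq(0,0)$; the four equations for $i=2$ are then satisfied automatically. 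The forms in cases $8$ and $10$ are handled in the same spirit, this time with an anti-diagonal $A=\mathrm{diag}$-type matrix $\begin{pmatrix}0&1\\ \mu^{-1}&0\end{pmatrix}$, whose $i=1$ equations force a zero entry and fix $a_{21}=\mu^{-1}$, after which the $i=2$ equations again hold automatically.

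The main obstacle is not conceptual but organisational: one must expand $\sum_{j}a_{ij}N^{j}=A^TM^iA$ in each case, verify that the proposed $A$ satisfies all four scalar equations per value of $i$, and simultaneously keep $\det A\neq 0$. The rank computation does the real work by pinning down the unique possible target, so the task reduces to bookkeeping; the verification stays short once one exploits that the several cases share matrix shapes and that the $i=2$ equations vanish automatically as soon as the $i=1$ equations have been arranged.
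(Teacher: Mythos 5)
Your proposal is correct and follows essentially the same route as the paper: restrict to the non-symmetric cases $2,3,5,6,8,10,11,12$ of the classification table, exhibit explicit matrices $A$ satisfying the criterion of Theorem \ref{isom}, and separate $\mathcal{B}_1$ from $\mathcal{B}_2$ via the rank invariant of Corollary \ref{judge}. The only (harmless) difference is organisational: you normalise each case directly onto $\mathcal{B}_1$ or $\mathcal{B}_2$, with the rank computation fixing the target in advance, whereas the paper chains isomorphisms between pairs of cases and invokes the rank invariant only at the end; your explicit matrices (e.g.\ $\mathrm{diag}(\nu^{-1},1)$ for cases $2$ and $3$, the antidiagonal matrix for cases $8$ and $10$, and the linear conditions $\nu a_{11}+\mu a_{21}=1$, $\nu a_{12}+\mu a_{22}=0$ for the two-parameter shapes) all check out.
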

\begin{proof}
Since $M^1$ and $M^2$ are not both symmetric matrixes, we have $m_{11}^2=m_{22}^1=0$,
and the crisscross ordered $2$-tuple $(M^1,M^2)$ of $2\times 2$ matrixes belongs to one of following cases:
\begin{align*}
\text{Case}\quad\quad  &\quad\quad M^1 & M^2\quad\quad &\quad\quad \text{Parameters}  \\
2.\quad\quad &\left(
                                       \begin{array}{cc}
                                         \nu &  0 \\
                                         0 & 0\\
                                       \end{array}
                                     \right) & \left(
        \begin{array}{cc}
          0 & 0\\
          \nu & 0 \\
        \end{array}
      \right)&\quad\quad \nu\in \k^{\times}  \\
3.\quad\quad &\left(
                                       \begin{array}{cc}
                                         \nu &  0 \\
                                         0 & 0\\
                                       \end{array}
                                     \right) & \left(
        \begin{array}{cc}
          0 & \nu \\
          0 & 0 \\
        \end{array}
      \right) &\quad\quad \nu  \in \k^{\times}    \\
5. \quad\quad & \left(
                                       \begin{array}{cc}
                                         \nu &  0 \\
                                         \mu & 0\\
                                       \end{array}
                                     \right)   & \left(
        \begin{array}{cc}
          0 & \nu \\
          0 & \mu \\
        \end{array}
      \right)             & \quad\quad \mu  \in \k^{\times}, \nu \in \k    \\
6.\quad\quad &\left(
                                       \begin{array}{cc}
                                         \nu &  0 \\
                                         \mu & 0\\
                                       \end{array}
                                     \right) &\left(
        \begin{array}{cc}
          0 & \nu \\
          0 & \mu \\
        \end{array}
      \right) & \quad\quad \mu  \in \k, \nu \in \k^{\times}    \\
8.\quad\quad &\left(
                                       \begin{array}{cc}
                                         0 &  \mu \\
                                         0 & 0\\
                                       \end{array}
                                     \right) &\left(
        \begin{array}{cc}
          0 & 0 \\
          0 & \mu \\
        \end{array}
      \right) & \quad\quad \mu \in \k^{\times}    \\
10.\quad\quad &\left(
                                       \begin{array}{cc}
                                         0 &  0 \\
                                         \mu & 0\\
                                       \end{array}
                                     \right) & \left(
        \begin{array}{cc}
          0 & 0 \\
          0 & \mu \\
        \end{array}
      \right)&\quad\quad \mu \in \k^{\times}     \\
11.\quad\quad &\left(
                                       \begin{array}{cc}
                                         \nu &  \mu \\
                                         0 & 0\\
                                       \end{array}
                                     \right) &\left(
        \begin{array}{cc}
          0 & 0 \\
          \nu & \mu \\
        \end{array}
      \right) & \quad\quad \mu \in \k^{\times}, \nu\in \k    \\
12.\quad\quad &\left(
                                       \begin{array}{cc}
                                         \nu &  \mu \\
                                         0 & 0\\
                                       \end{array}
                                     \right) &\left(
        \begin{array}{cc}
          0 & 0 \\
          \nu & \mu \\
        \end{array}
      \right) &\quad\quad \mu \in \k, \nu\in \k^{\times}   \\
\end{align*}
by the classification above. Let $(M^1,M^2)$ and $(N^1,N^2)$ are two crisscross ordered $2$-tuples of $2\times 2$ matrixes belong to one of the $8$ cases listed above. We want to check whether there exists $A\in \mathrm{GL}_2(\k)$ such that $(\ref{eqs})$ holds. By computations,
\begin{align*}
 & \quad (M^1, M^2)& (N^1,N^2)  &  \quad\quad \exists A   \\
\text{Cases}& \quad \quad 2 &\quad  8   \quad    &\quad \left(
        \begin{array}{cc}
          0 & \frac{\mu}{\nu} \\
          1 & 0 \\
        \end{array}
      \right) \\
\text{Cases} & \quad \quad 8 &\quad  11  \quad    &\quad \left(
        \begin{array}{cc}
          1 & 0 \\
          \frac{\nu}{\mu} & 1 \\
        \end{array}
      \right) \\
\text{Cases} & \quad \quad 3 &\quad  6  \quad    &\quad \left(
        \begin{array}{cc}
          1 & \frac{\mu}{\nu} \\
          0 & 1 \\
        \end{array}
      \right)  \\
\text{Cases} & \quad \quad 10 &\quad  5  \quad    &\quad \left(
        \begin{array}{cc}
          1+\frac{\nu^2}{\mu^2} & \frac{\nu}{\mu} \\
          \frac{\nu}{\mu} & 1 \\
        \end{array}
      \right)\\
\text{Cases} & \quad \quad 3 &\quad  10  \quad    &\quad \left(
        \begin{array}{cc}
          0 & \frac{\mu}{\nu} \\
          1 & 0 \\
        \end{array}
      \right).
\end{align*}
And it is easy to see that the crisscross ordered $2$-tuple $(M^1, M^2)$ of $2\times 2$ matrixes in  Case $12$ belongs to Case $11$ and Case $2$ when $\mu\in \k^{\times}$ and $\mu =0$, respectively.

 By Theorem \ref{isom}, $\mathcal{A}\cong \mathcal{B}_1$ when the crisscross ordered $2$-tuple $(M^1, M^2)$ of $2\times 2$ matrixes belongs to any one of the following cases:
 $ \text{Case}\, 2,  \text{Case}\,8, \text{Case}\,11,$ and  $\text{Case}\,12$. And $\mathcal{A}\cong \mathcal{B}_2$  when  $(M^1, M^2)$ belongs to Case $3$, Case $5$, Case $6$ and Case $10$. Since
 $$1=r\left(
      \begin{array}{cc}
        1 & 0 \\
        0 & 0 \\
        0 & 0 \\
        1 & 0 \\
      \end{array}
    \right)\neq
   r\left(
      \begin{array}{cc}
        1 & 0 \\
        0 & 0 \\
        0 & 1 \\
        0 & 0 \\
      \end{array}
    \right)=2,
  $$
we have $\mathcal{B}_1\not\cong \mathcal{B}_2$ by Corollary \ref{judge}.

\end{proof}

  \begin{prop}\label{symone}
Assume that $\mathcal{A}$ is a DG free algebra such that $$\mathcal{A}^{\#}=\k\langle x_1,x_2\rangle, |x_1|=|x_2|=1$$ and $\partial_{\mathcal{A}}$ is defined by crisscrossed $2\times 2$ matrixes $M^1$ and $M^2$. If $M^1$ and $M^2$ are both symmetric matrixes with $m_{22}^1=m_{11}^2=0$, then $\mathcal{A}$ is isomorphic to one of the  following four DG free algebras:
\begin{enumerate}
\item $\mathcal{B}_3$ whose differential $\partial_{\mathcal{B}_3}$ is defined by $ \left(
       \begin{array}{cc}
         1 & 0\\
         0 & 0 \\
       \end{array}
     \right)$ and $  \left(
       \begin{array}{cc}
         0 & 1 \\
         1 & 0\\
       \end{array}
     \right);  $ \\
\item  $\mathcal{B}_4$ whose differential $\partial_{\mathcal{B}_4}$ is defined by $ \left(
       \begin{array}{cc}
         1 & 0\\
         0 & 0 \\
       \end{array}
     \right)$ and $  \left(
       \begin{array}{cc}
         0 & 0 \\
         0 & 1\\
       \end{array}
     \right)$;
   \item $\mathcal{B}_5$ whose differential $\partial_{\mathcal{B}_5}$ is defined by $ \left(
       \begin{array}{cc}
         1 & 0\\
         0 & 0 \\
       \end{array}
     \right)$ and $  \left(
       \begin{array}{cc}
         0 & 0 \\
         0 & 0\\
       \end{array}
     \right);  $ \\
     \item $\mathcal{B}_0$ whose differential $\partial_{\mathcal{B}_0}=0$.
\end{enumerate}
  \end{prop}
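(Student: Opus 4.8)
The plan is to combine the explicit classification table preceding the proposition with the isomorphism criterion of Theorem \ref{isom} and the invariants of Corollary \ref{judge}. First I would observe that, among the twelve crisscross ordered $2$-tuples with $m_{22}^1=m_{11}^2=0$ listed in the table, the ones in which both $M^1$ and $M^2$ are symmetric are exactly Cases $1$, $4$, $7$ and $9$; in every remaining case at least one of $M^1,M^2$ is forced to be non-symmetric (some entry lying in $\k^{\times}$ sits opposite a $0$). Thus the hypothesis confines $(M^1,M^2)$ to these four parametrized families, and the task is to normalize each family to one of $\mathcal{B}_3,\mathcal{B}_4,\mathcal{B}_5,\mathcal{B}_0$.

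For the normalization I would, in each case, exhibit an explicit $A=(a_{ij})\in\mathrm{GL}_2(\k)$ satisfying the criterion $\sum_{j}a_{ij}N^j=A^TM^iA$ of Theorem \ref{isom}, branching on which parameters vanish. Concretely, Case $9$ and Case $1$ with $\lambda=0$ normalize to $\mathcal{B}_3$: for Case $1$ with $\lambda=0$ one has $M^i=\mu N^i$, so $A=\tfrac1\mu E_2$ works, while for Case $9$ one checks directly that $A=\left(\begin{smallmatrix}0 & \nu\\ 1 & 0\end{smallmatrix}\right)$ does. Case $7$, Case $4$ with $\lambda\mu\neq 0$, and Case $1$ with $\lambda\neq 0$ normalize to $\mathcal{B}_4$; Case $4$ with exactly one of $\lambda,\mu$ nonzero normalizes to $\mathcal{B}_5$ (after scaling the single nonzero entry to $1$ by a diagonal $A$); and Case $4$ with $\lambda=\mu=0$ is already $\mathcal{B}_0$. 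Because the single matrix $A$ simultaneously governs the congruence $M^i\mapsto A^TM^iA$ and the mixing $\sum_j a_{ij}N^j$, each of these is a short but genuinely nonlinear verification.

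It then remains to show $\mathcal{B}_0,\mathcal{B}_3,\mathcal{B}_4,\mathcal{B}_5$ are pairwise non-isomorphic. The rank of $\left(\begin{smallmatrix}N^1\\ N^2\end{smallmatrix}\right)$, an invariant by Corollary \ref{judge}(1), equals $0,2,2,1$ for $\mathcal{B}_0,\mathcal{B}_3,\mathcal{B}_4,\mathcal{B}_5$ respectively, which isolates $\mathcal{B}_0$ and $\mathcal{B}_5$ from the others and from each other. To separate $\mathcal{B}_3$ from $\mathcal{B}_4$ I would use a finer invariant drawn from Theorem \ref{isom}: summing the criterion against $(s,t)$ gives $A^T(sM^1+tM^2)A=u_1N^1+u_2N^2$ with $(u_1,u_2)=(s,t)A$, and taking determinants yields $\det(A)^2\det(sM^1+tM^2)=\det(u_1N^1+u_2N^2)$. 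Hence the binary quadratic form $\det(sN^1+tN^2)$ is determined up to a linear change of $(s,t)$ and a nonzero scalar. For $\mathcal{B}_3$ this form is $-t^2$ (a double root), whereas for $\mathcal{B}_4$ it is $st$ (two distinct roots), so $\mathcal{B}_3\not\cong\mathcal{B}_4$.

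The main obstacle is exactly this last separation: the rank invariant of Corollary \ref{judge} cannot tell $\mathcal{B}_3$ and $\mathcal{B}_4$ apart, so one must either grind out a direct proof that $\sum_j a_{ij}N^j=A^TM^iA$ has no invertible solution relating them, or introduce the determinant/discriminant invariant above. The other delicate point is the bookkeeping in the normalization step, namely keeping the one matrix $A$ consistent across both the $i=1$ and $i=2$ equations while maintaining $\det A\neq 0$; each individual case is elementary, but the branching on vanishing parameters must be organized carefully so that every value of $(\mu,\nu,\lambda,\omega)$ is accounted for.
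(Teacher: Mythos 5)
Your reduction to Cases $1$, $4$, $7$, $9$ and the subsequent normalization by explicit matrices $A$ satisfying $\sum_j a_{ij}N^j=A^TM^iA$, branching on which parameters vanish, is exactly the paper's argument, and your assignment of cases to $\mathcal{B}_3,\mathcal{B}_4,\mathcal{B}_5,\mathcal{B}_0$ matches the paper's. One computational slip: for Case $9$ the matrix $A=\left(\begin{smallmatrix}0&\nu\\ 1&0\end{smallmatrix}\right)$ does not satisfy the criterion unless $\nu=1$ (one finds $A^TM^1A=\nu^2N^2$ against the required $\nu N^2$); you need something like $A=\left(\begin{smallmatrix}0&1\\ \nu^{-1}&0\end{smallmatrix}\right)$, a harmless rescaling. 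Where you genuinely depart from the paper is the pairwise non-isomorphism, which the paper does not actually include in this proof (the statement only asserts membership in the list of four; distinctness is relegated to the following Remark, where $\mathcal{B}_3\not\cong\mathcal{B}_4$ is settled by directly showing the system $\sum_j a_{ij}N^j=A^TM^iA$ has no invertible solution). Your observation that $\det(sM^1+tM^2)$ is carried to $\det(u_1N^1+u_2N^2)$ with $(u_1,u_2)=(s,t)A$ up to the factor $\det(A)^2$, so that the root multiplicities of this binary quadratic form are an isomorphism invariant, is correct and distinguishes $-t^2$ from $st$ more conceptually than the paper's brute-force check; it buys a reusable invariant (finer than the rank invariant of Corollary \ref{judge}) at the cost of being extra to what the proposition literally requires.
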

  \begin{proof}
  Since $M^1$ and $M^2$ are both symmetric matrixes  with $m_{22}^1=m_{11}^2=0$,
 the crisscrossed $2\times 2$ matrixes pair $(M^1, M^2)$ belongs to one of following cases:
\begin{align*}
\text{Case}\quad\quad  &\quad\quad M^1 & M^2\quad\quad &\quad\quad \text{Parameters}  \\
1.\quad\quad &\left(
                                       \begin{array}{cc}
                                         \mu &  0 \\
                                         0 & 0\\
                                       \end{array}
                                     \right) & \left(
        \begin{array}{cc}
          0 & \mu\\
          \mu & \lambda \\
        \end{array}
      \right)&\quad \quad\quad \mu\in \k^{\times}, \lambda\in\k  \\
4.\quad\quad &\left(
                                       \begin{array}{cc}
                                         \lambda &  0 \\
                                         0 & 0\\
                                       \end{array}
                                     \right) & \left(
        \begin{array}{cc}
          0 & 0\\
          0 & \mu \\
        \end{array}
      \right)&\quad \quad\quad \lambda, \mu\in \k \\
7.\quad\quad &\left(
                                       \begin{array}{cc}
                                         \lambda &  \mu \\
                                         \mu & 0\\
                                       \end{array}
                                     \right) &\left(
        \begin{array}{cc}
          0 & 0 \\
          0 & \mu \\
        \end{array}
      \right) & \quad \quad \quad\lambda, \mu \in \k^{\times}    \\
9.\quad\quad &\left(
                                       \begin{array}{cc}
                                         0 &  \nu \\
                                         \nu & 0\\
                                       \end{array}
                                     \right) & \left(
        \begin{array}{cc}
          0 & 0 \\
          0 & \nu \\
        \end{array}
      \right)&\quad\quad\quad \nu \in \k^{\times}     \\
\end{align*}
by the classification above.  By computations,
for \begin{align*}
M^1=\left(
                                       \begin{array}{cc}
                                         \mu &  0 \\
                                         0 & 0\\
                                       \end{array}
                                     \right), & M^2=  \left(
        \begin{array}{cc}
          0 & \mu\\
          \mu & \lambda \\
        \end{array}
      \right) \\
N^1=\left(
                                       \begin{array}{cc}
                                         \lambda &  \mu \\
                                         \mu & 0\\
                                       \end{array}
                                     \right),  & N^2=\left(
        \begin{array}{cc}
          0 & 0 \\
          0 & \mu \\
        \end{array}
      \right),  \lambda,\mu\in \k^{\times}
\end{align*}
and
\begin{align*}
M^1=\left(
                                       \begin{array}{cc}
                                         \lambda &  0 \\
                                         0 & 0\\
                                       \end{array}
                                     \right), & M^2=  \left(
        \begin{array}{cc}
          0 & 0\\
          0 & \mu \\
        \end{array}
      \right) \\
N^1=\left(
                                       \begin{array}{cc}
                                         \lambda &  \mu \\
                                         \mu & 0\\
                                       \end{array}
                                     \right),  & N^2=\left(
        \begin{array}{cc}
          0 & 0 \\
          0 & \mu \\
        \end{array}
      \right), \lambda,\mu\in \k^{\times},
\end{align*}
there exists $A= \left(
        \begin{array}{cc}
          0 & 1 \\
          1 & 0 \\
        \end{array}
      \right)$ and  $A= \left(
        \begin{array}{cc}
          0 & \frac{\mu}{\lambda} \\
          \frac{\lambda}{\mu} & 1 \\
        \end{array}
      \right)$, respectively, such that $(\ref{eqs})$ holds.
On the other hand, for any $\lambda, \mu\in \k^{\times}$, and
\begin{align*}
M^1=\left(
                                       \begin{array}{cc}
                                         \lambda &  0 \\
                                         0 & 0\\
                                       \end{array}
                                     \right), & M^2=  \left(
        \begin{array}{cc}
          0 & 0\\
          0 & \mu \\
        \end{array}
      \right) \\
N^1=\left(
                                       \begin{array}{cc}
                                         1 &  0 \\
                                         0 & 0\\
                                       \end{array}
                                     \right),  & N^2=\left(
        \begin{array}{cc}
          0 & 0 \\
          0 & 1 \\
        \end{array}
      \right),
\end{align*}
there exists $A=\left(
        \begin{array}{cc}
          \frac{1}{\lambda} & 0 \\
          0 & \frac{1}{\mu}\\
        \end{array}
      \right) $ such that $(\ref{eqs})$ holds. Therefore, $\mathcal{A}$ is isomorphic to $\mathcal{B}_4$, when $M^1$ and $M^2$ belong to each of the following cases:
      \begin{enumerate}
      \item Case $7$;
      \item Case $1$, $\lambda\neq 0$;
      \item Case $4$, $\lambda \neq 0, \mu\neq 0$.
      \end{enumerate}
      If the parameter $\lambda=0$ in Case $1$,  then $$M^1= \left(
                                       \begin{array}{cc}
                                         \mu &  0 \\
                                         0 & 0\\
                                       \end{array}
                                     \right), M^2= \left(
        \begin{array}{cc}
          0 & \mu\\
          \mu & 0\\
        \end{array}
      \right),\mu\in\k^{\times}.$$
Let $N^1, N^2$ belong to Case $9$, i.e.,
$$N^1=\left(
                                       \begin{array}{cc}
                                         0 &  \nu \\
                                         \nu & 0\\
                                       \end{array}
                                     \right) , N^2=\left(
        \begin{array}{cc}
          0 & 0 \\
          0 & \nu \\
        \end{array}
      \right).$$
Then there exists $A=\left(
        \begin{array}{cc}
          0 & \frac{\nu}{\mu} \\
          1 & 0 \\
        \end{array}
      \right)$ such that $(\ref{eqs})$ holds. Hence $\mathcal{A}$ is isomorphic to $\mathcal{B}_3$, when
 $M^1$ and $M^2$ belong to either of the following two cases:
 \begin{enumerate}
 \item Case $1$, $\lambda =0$;
 \item Case $9$.
 \end{enumerate}
If the parameter $\lambda\neq 0$ and $\mu=0$ in Case $4$,  then $$M^1= \left(
                                       \begin{array}{cc}
                                         \lambda &  0 \\
                                         0 & 0\\
                                       \end{array}
                                     \right), M^2= \left(
        \begin{array}{cc}
          0 &  0\\
          0 & 0\\
        \end{array}
      \right).$$
 Let $N^1, N^2$ belong to Case $4$ with $\lambda=0, \mu\neq 0$, i.e.,
$$N^1=\left(
                                       \begin{array}{cc}
                                         0 &  0 \\
                                         0 &  0\\
                                       \end{array}
                                     \right) , N^2=\left(
        \begin{array}{cc}
          0 & 0 \\
          0 & \mu \\
        \end{array}
      \right).$$
Then there exists $A=\left(
        \begin{array}{cc}
          0 & \frac{\mu}{\lambda} \\
          1 & 0 \\
        \end{array}
      \right)$ such that $(\ref{eqs})$ holds. So $\mathcal{A}$ is isomorphic to $\mathcal{B}_5$, when
$M^1$ and $M^2$ belong to either of the following two cases:
\begin{enumerate}
\item Case $4$, $\lambda \neq 0,\mu=0$;
\item Case $5$, $\lambda =0, \mu\neq 0$.
\end{enumerate}

\end{proof}
\begin{rem}
We have $\mathcal{B}_3\not\cong \mathcal{B}_4$ since one can't find $A=(a_{ij})_{2\times 2}\in \mathrm{GL}_{2}(\k)$ satisfying $(\ref{eqs})$,i.e.,
$$ \left(
     \begin{array}{cc}
       a_{11} & a_{12}\\
       a_{12}& 0 \\
       a_{21} & a_{22}\\
       a_{22} & 0  \\
     \end{array}
   \right)\\
   =\left(
             \begin{array}{cc}
               a_{11}^2& a_{11}a_{12} \\
               a_{12}a_{11} & a_{12}^2 \\
               a_{21}^2 & a_{21}a_{22} \\
               a_{21}a_{22} & a_{22}^2 \\
             \end{array}
           \right).  $$
   Hence $\mathcal{B}_3, \mathcal{B}_4,\mathcal{B}_5$ and $\mathcal{B}_6$ are $4$ different isomorphism classes by Corollary \ref{judge}.
\end{rem}

\begin{prop}\label{symtwo}
Assume that $\mathcal{A}$ is a DG free algebra such that $$\mathcal{A}^{\#}=\k\langle x_1,x_2\rangle, |x_1|=|x_2|=1$$ and $\partial_{\mathcal{A}}$ is defined by a crisscross ordered $2$-tuple $(M^1,M^2)$ of $2\times 2$ matrixes. If $M^1$ and $M^2$ are both symmetric matrixes with $(m_{22}^1,m_{11}^2)\neq (0,0)$, then $\mathcal{A}$ is isomorphic to one of the  following DG free algebras:
\begin{enumerate}
\item  $\mathcal{B}_5$,  where $\partial_{\mathcal{B}_5}$ is defined by $ \left(
       \begin{array}{cc}
         1& 0\\
         0 & 0 \\
       \end{array}
     \right)$ and $  \left(
       \begin{array}{cc}
         0 & 0 \\
         0 & 0\\
       \end{array}
     \right)$;
\item  $\mathcal{B}_6$, where $\partial_{\mathcal{B}_6}$ is defined by $ \left(
       \begin{array}{cc}
         0& 0\\
         0 & 1 \\
       \end{array}
     \right)$ and $  \left(
       \begin{array}{cc}
         0 & 0 \\
         0 & 0\\
       \end{array}
     \right)$;
     \item  $\mathcal{B}_7$, where $\partial_{\mathcal{B}_7}$ is defined by $ \left(
       \begin{array}{cc}
         0& 1\\
         1 & 1\\
       \end{array}
     \right)$ and $  \left(
       \begin{array}{cc}
         0 & 0 \\
         0 & 1\\
       \end{array}
     \right)$;
\item $\mathcal{B}_8$,  where $\partial_{\mathcal{B}_8}$ is defined by $ \left(
       \begin{array}{cc}
         0 & 0\\
         0 & 1 \\
       \end{array}
     \right)$ and $  \left(
       \begin{array}{cc}
         0 & 0 \\
         0 & 1\\
       \end{array}
     \right);  $ \\
   \item $\mathcal{B}_{9}$, where $\partial_{\mathcal{B}_{9}}$ is defined by $ \left(
       \begin{array}{cc}
         1 & 1\\
         1 & 0 \\
       \end{array}
     \right)$ and $  \left(
       \begin{array}{cc}
         0 & 0 \\
         0 & 1\\
       \end{array}
     \right);  $ \\
     \item $\mathcal{B}_{10}$, where $\partial_{\mathcal{B}_{10}}$ is defined by $ \left(
       \begin{array}{cc}
         1 & 0\\
         0 & -\frac{1}{4} \\
       \end{array}
     \right)$ and $  \left(
       \begin{array}{cc}
         0 & 1 \\
         1 & 1\\
       \end{array}
     \right);  $ \\
      \item $\mathcal{B}_{11}$, where $\partial_{\mathcal{B}_{11}}$ is defined by $ \left(
       \begin{array}{cc}
         1 & 0\\
         0 & 1 \\
       \end{array}
     \right)$ and $  \left(
       \begin{array}{cc}
         0 & 1 \\
         1 & 0\\
       \end{array}
     \right) $;
     \item $\mathcal{B}(s,t)$, where $\partial_{\mathcal{B}(s,t)}$ is defined by $ \left(
       \begin{array}{cc}
         1+s-st & 1\\
         1 & \frac{1}{s}\\
       \end{array}
     \right)$ and $ \left(
       \begin{array}{cc}
         s & 1 \\
         1 & t\\
       \end{array}
     \right), s\in \k^{\times}, t\in \k.$
\end{enumerate}
\end{prop}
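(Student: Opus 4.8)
The plan is to carry out the same kind of case analysis as in Propositions \ref{nonsym} and \ref{symone}, now on the portion of the classification table for which $(m_{22}^1,m_{11}^2)\neq(0,0)$. By Corollary \ref{judge}(2) symmetry of the defining pair is preserved under isomorphism, and in this regime the classification table only produces symmetric tuples, so it suffices to treat the representatives occurring in Cases $13$--$17$. For each of these parametrized families I would use Theorem \ref{isom}: exhibit an explicit $A=(a_{ij})_{2\times2}\in\mathrm{GL}_2(\k)$ solving the system $(\ref{eqs})$ that carries $(M^1,M^2)$ to one of the asserted normal forms, and then use Corollary \ref{judge}(1) (equality of the stacked ranks $r\left(\begin{smallmatrix}M^1\\M^2\end{smallmatrix}\right)$) as a first separator between the resulting classes.

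I would first dispose of the generic Case $13$, where $m_{11}^2,m_{22}^1\neq0$ and $\mu,\nu,\lambda\in\k^{\times}$, $\omega\in\k$. Here a rescaling change of variables $A$ normalizes the two off-diagonal entries $\mu,\nu$ to $1$; substituting $\mu=\nu=1$, $\lambda=s$, $\omega=t$ into the Case $13$ matrices reproduces exactly the defining pair of $\mathcal{B}(s,t)$, so every generic Case $13$ algebra is isomorphic to some $\mathcal{B}(s,t)$ with $s=\lambda\in\k^{\times}$, $t=\omega\in\k$. The remaining Cases $14$--$17$, in which exactly one of $m_{11}^2,m_{22}^1$ vanishes, have a rank-deficient pencil; by explicit matrices $A$ depending on which of the free parameters vanish, I would reduce them (with sub-case splits running parallel to those in Proposition \ref{symone}) to the finite list $\mathcal{B}_5,\mathcal{B}_6,\mathcal{B}_7,\mathcal{B}_8,\mathcal{B}_9$, together with the two rigid representatives $\mathcal{B}_{10}$ and $\mathcal{B}_{11}$, which arise from the Case $15$ family at special parameter values — the numerical coefficient $-\frac{1}{4}$ in $\mathcal{B}_{10}$ being forced by the discriminant that appears when one solves $(\ref{eqs})$ in that sub-case.

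Finally I would prove the list is irredundant. Whenever two candidate representatives have different stacked ranks, Corollary \ref{judge}(1) separates them at once; when the ranks agree, I would argue directly that $(\ref{eqs})$ admits no solution $A$ with $\det A\neq0$, by the same type of contradiction used in the remark distinguishing $\mathcal{B}_3$ from $\mathcal{B}_4$. The principal obstacle is the genuinely continuous family $\mathcal{B}(s,t)$: one must decide for which $(s,t)$ and $(s',t')$ the system $(\ref{eqs})$ is solvable, and which special $(s,t)$ make $\mathcal{B}(s,t)$ collapse onto one of the finite forms $\mathcal{B}_5,\dots,\mathcal{B}_{11}$. The discrete rank invariant is powerless here, and the right bookkeeping device is the binary quadratic form $\det(xM^1+yM^2)$ in $(x:y)$: congruence by $A$ multiplies it by $(\det A)^2$, while the row-mixing $(a_{ij}E_2)$ acts by a linear substitution in $(x:y)$, so the root configuration of this form (two distinct roots, a double root, or identically zero) is an isomorphism invariant, and its discriminant as a function of $(s,t)$ furnishes the finer invariant distinguishing the members of the family. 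Tracking this invariant through the normalization is what pins down the honest moduli and what I expect to be the main difficulty.
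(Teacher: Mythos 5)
Your proposal follows the paper's proof in all essentials: the paper reduces Cases $16$ and $17$ to Cases $14$ and $15$ via the swap matrix $\left(\begin{smallmatrix}0&1\\1&0\end{smallmatrix}\right)$, splits Cases $14$ and $15$ into five subcases each according to which parameters vanish (the subcase $\omega\lambda+\frac{\mu^2}{4}=0$ of Case $15$ producing $\mathcal{B}_{10}$ -- exactly the discriminant phenomenon you predict), and handles Case $13$ by the diagonal rescaling $A=\mathrm{diag}(\nu^{-1},\mu^{-1})$. Two caveats. First, that rescaling does not leave $\lambda$ and $\omega$ fixed: the normalized parameters are $s=\lambda\mu/\nu$ and $t=\omega/\mu$, not $s=\lambda$, $t=\omega$; this is a harmless slip, since the conclusion that Case $13$ lands somewhere in the family $\mathcal{B}(s,t)$ is unaffected. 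Second, your final paragraph on irredundancy addresses something the proposition does not assert -- it only claims $\mathcal{A}$ is isomorphic to \emph{one of} the listed algebras -- and the list is in fact redundant: Proposition \ref{stone} shows $\mathcal{B}(s,s^{-1})\cong\mathcal{B}_8$ for $s\neq -1$ and $\mathcal{B}(-1,-1)\cong\mathcal{B}_6$, so the irredundancy you set out to prove is false as stated. That said, your pencil invariant $\det(xM^1+yM^2)$ is a genuinely valid isomorphism invariant (by Theorem \ref{isom}, congruence scales it by $(\det A)^2$ while the row-mixing acts by the substitution $(x,y)\mapsto(x,y)A$), and since the paper nowhere separates the members of the family $\mathcal{B}(s,t)$ from one another, carrying that invariant through would be a real addition rather than a redundancy.
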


\begin{proof}
Since $M^1$ and $M^2$ are both symmetric matrixes  with $(m_{22}^1,m_{11}^2)\neq (0,0)$,
 the crisscross ordered $2$-tuple $(M^1, M^2)$ of $2\times 2$ matrixes  belongs to one of the following cases:
\begin{align*}
\text{Case}\quad\quad  &\quad\quad M^1 & M^2\quad\quad &\quad\quad \text{Parameters}  \\
13.\quad\quad &\left(
                                       \begin{array}{cc}
                                         \nu+\lambda(\mu-\omega) &  \mu \\
                                         \mu & \frac{\mu}{\lambda}\\
                                       \end{array}
                                     \right) & \left(
        \begin{array}{cc}
          \lambda\nu & \nu\\
          \nu & \omega \\
        \end{array}
      \right)&\quad \quad\quad \mu,\nu,\lambda \in \k^{\times}, \omega\in\k  \\
14.\quad\quad &\left(
                                       \begin{array}{cc}
                                         \frac{\mu(\mu-\omega)}{\lambda} &  \mu \\
                                         \mu & \lambda\\
                                       \end{array}
                                     \right) & \left(
        \begin{array}{cc}
          0 & 0\\
          0 & \omega \\
        \end{array}
      \right)&\quad \quad\quad \lambda\in \k^{\times},\mu, \omega\in \k \\
15.\quad\quad &\left(
                                       \begin{array}{cc}
                                         \omega &  0 \\
                                         0 & \lambda\\
                                       \end{array}
                                     \right) &\left(
        \begin{array}{cc}
          0 & \omega \\
          \omega & \mu \\
        \end{array}
      \right) & \quad \quad \quad\lambda\in \k^{\times},\mu,\omega\in \k    \\
16.\quad\quad &\left(
                                       \begin{array}{cc}
                                         \omega &  0 \\
                                         0 & 0\\
                                       \end{array}
                                     \right) & \left(
        \begin{array}{cc}
          \lambda & \mu \\
          \mu & \frac{\mu(\mu-\omega)}{\lambda} \\
        \end{array}
      \right)&\quad\quad\quad \lambda \in \k^{\times},\mu,\omega\in \k     \\
      17.\quad\quad &\left(
                                       \begin{array}{cc}
                                         \mu &  \omega \\
                                         \omega & 0\\
                                       \end{array}
                                     \right) & \left(
        \begin{array}{cc}
          \lambda & 0 \\
          0 & \omega \\
        \end{array}
      \right)&\quad\quad\quad \lambda \in \k^{\times},\mu,\omega\in \k    \\
\end{align*}
by the classification above.
 By computations,
for \begin{align*}
M^1=\left(
                                       \begin{array}{cc}
                                         \frac{\mu(\mu-\omega)}{\lambda} &  \mu \\
                                         \mu & \lambda\\
                                       \end{array}
                                       \right), & M^2=  \left(
        \begin{array}{cc}
          0 & 0\\
          0 & \omega \\
        \end{array}
      \right) \\
N^1=\left(
                                       \begin{array}{cc}
                                         \omega &  0 \\
                                         0 & 0\\
                                       \end{array}
                                     \right),  & N^2=\left(
         \begin{array}{cc}
          \lambda & \mu \\
          \mu & \frac{\mu(\mu-\omega)}{\lambda} \\
        \end{array}
      \right),  \lambda\in \k^{\times},\mu,\omega\in \k
\end{align*}
and
\begin{align*}
M^1=\left(
                                      \begin{array}{cc}
                                         \omega &  0 \\
                                         0 & \lambda\\
                                       \end{array}
                                     \right), & M^2=  \left(
        \begin{array}{cc}
          0 & \omega \\
          \omega & \mu \\
        \end{array}
      \right) \\
N^1=\left(
                                      \begin{array}{cc}
                                         \mu &  \omega \\
                                         \omega & 0\\
                                       \end{array}
                                     \right),  & N^2=\left(
        \begin{array}{cc}
          \lambda & 0 \\
          0 & \omega \\
        \end{array}
      \right), \lambda\in \k^{\times}, \mu,\omega\in \k,
\end{align*}
there exists $A= \left(
        \begin{array}{cc}
          0 & 1 \\
          1 & 0 \\
        \end{array}
      \right)$ such that $(\ref{eqs})$ holds. By Theorem \ref{isom}, we only need to check the isomorphism classes of Case $13$, Case $14$ and Case $15$ one by one.

For Case $14$, we divide it into the following $5$ cases:
\begin{center}
\begin{tabular}{|l|l|l|l|}
  \hline
  Cases & $M^1$ & $M^2$ & Parameters \\  \hline
  $14.1\quad \omega=0,\mu\neq 0$ & $\left(
                                       \begin{array}{cc}
                                          \frac{\mu^2}{\lambda} & \mu \\
          \mu & \lambda \\
                                       \end{array}
                                     \right)$
    & $\left(
        \begin{array}{cc}
          0& 0 \\
          0 & 0 \\
        \end{array}
      \right) $
     & $\lambda,\mu \in \k^{\times}$  \\
  \hline
   $14.2\quad \omega=0,\mu=0$ & $\left(
                                       \begin{array}{cc}
                                          0 & 0 \\
          0 & \lambda \\
                                       \end{array}
                                     \right)$
    & $\left(
        \begin{array}{cc}
          0& 0 \\
          0 & 0 \\
        \end{array}
      \right) $
     & $\lambda \in \k^{\times}$  \\
  \hline
  $14.3\quad \mu=\omega\neq 0$ & $\left(
        \begin{array}{cc}
          0 & \mu \\
          \mu & \lambda \\
        \end{array}
      \right) $& $\left(
                   \begin{array}{cc}
                     0 & 0 \\
                     0 & \mu \\
                   \end{array}
                 \right)$
       & $\lambda,\mu \in \k^{\times}$ \\
   \hline
   $14.4\quad \omega\neq 0,\mu=0$ & $\left(
        \begin{array}{cc}
          0 &  0\\
          0 & \lambda \\
        \end{array}
      \right) $& $\left(
                   \begin{array}{cc}
                     0 & 0 \\
                     0 & \omega \\
                   \end{array}
                 \right)$
       & $\lambda,\omega\in \k^{\times}$ \\
   \hline
   $14.5\quad \omega \neq 0, \mu\neq \omega, \mu\neq 0$ & $\left(
        \begin{array}{cc}
          \frac{\mu(\mu-\omega)}{\lambda} & \mu \\
          \mu & \lambda \\
        \end{array}
      \right) $& $\left(
                   \begin{array}{cc}
                     0 & 0 \\
                     0 & \omega \\
                   \end{array}
                 \right)$
       & $\lambda,\mu,\omega\in \k^{\times}$\\
   \hline
\end{tabular}.
\end{center}
For the cases listed above, we can choose corresponding $N^1,N^2$, such that there exists $A\in \mathrm{GL}_{\k}(2)$ such that $(\ref{eqs})$ holds. We have the following tabular:
\begin{center}
\begin{tabular}{|l|l|l|l|}
  \hline
  Cases & $N^1$ &$N^2$ & $A$ \\  \hline
  Case $14.1$ & $\left(
                                       \begin{array}{cc}
                                          1& 0\\
          0 & 0 \\
                                       \end{array}
                                     \right)$& $\left(
        \begin{array}{cc}
          0& 0 \\
          0 & 0 \\
        \end{array}
      \right)$
    & $\left(
        \begin{array}{cc}
          \frac{\lambda}{\mu^2}& -\lambda \\
          0 & \mu \\
        \end{array}
      \right) $\\
  \hline
   Case $14.2$ & $\left(
                                       \begin{array}{cc}
                                          0 & 0 \\
          0 & 1 \\
                                       \end{array}
                                     \right)$ & $\left(
        \begin{array}{cc}
          0 & 0 \\
          0 & 0 \\
        \end{array}
      \right)$
    & $\left(
        \begin{array}{cc}
          \lambda & 0 \\
          0 & 1 \\
        \end{array}
      \right) $\\
  \hline
  Case $14.3$ & $\left(
        \begin{array}{cc}
          0 & 1 \\
          1 & 1 \\
        \end{array}
      \right)$ &$\left(
                   \begin{array}{cc}
                     0 & 0 \\
                     0 & 1 \\
                   \end{array}
                 \right) $ & $\left(
                   \begin{array}{cc}
                     \frac{\lambda}{\mu^2} & 0 \\
                     0 & \frac{1}{\mu} \\
                   \end{array}
                 \right)$\\
   \hline
   Case $14.4$ & $\left(
        \begin{array}{cc}
          0 &  0\\
          0 & 1 \\
        \end{array}
      \right)$ & $\left(
                   \begin{array}{cc}
                     0 & 0 \\
                     0 & 1 \\
                   \end{array}
                 \right)$& $\left(
                   \begin{array}{cc}
                     \frac{\lambda}{\omega^2}& 0 \\
                     0 & \frac{1}{\omega} \\
                   \end{array}
                 \right)$\\
   \hline
  Case $14.5$ & $\left(
        \begin{array}{cc}
         1 & 1 \\
          1 & 0 \\
        \end{array}
      \right)$ & $\left(
                   \begin{array}{cc}
                     0 & 0 \\
                     0 & 1 \\
                   \end{array}
                 \right) $& $\left(
                   \begin{array}{cc}
                     \frac{\lambda}{\mu(\mu-\omega)} & -\frac{\lambda}{\mu\omega} \\
                     0 & \frac{1}{\omega}\\
                   \end{array}
                 \right)$\\
   \hline
\end{tabular}.
\end{center}
By Theorem \ref{isom}, $\mathcal{A}$ is isomorphic to $\mathcal{B}_5,\mathcal{B}_6,\mathcal{B}_7,\mathcal{B}_8$ and $\mathcal{B}_9$, when $M^1, M^2$ belong to Case $14.1$, Case $14.2$, Case $14.3$, Case $14.4$ and Case $14.5$, respectively.

For Case $15$, we divide it into the following $5$ cases:
\begin{center}
\begin{tabular}{|l|l|l|l|}
  \hline
  Cases & $M^1$ & $M^2$ & Parameters \\  \hline
  $15.1\quad \omega\neq 0,\mu\neq 0, \omega\lambda +\frac{\mu^2}{4}\neq 0 $ & $\left(
                                       \begin{array}{cc}
                                         \omega &  0 \\
                                         0 & \lambda\\
                                       \end{array}
                                     \right)$
    & $\left(
        \begin{array}{cc}
          0 & \omega \\
          \omega & \mu \\
        \end{array}
      \right)$
     & $\lambda,\omega,\mu \in \k^{\times}$  \\
  \hline
  $15.2\quad \omega\neq 0,\mu\neq 0, \omega\lambda +\frac{\mu^2}{4}=0 $ & $\left(
                                       \begin{array}{cc}
                                         \omega &  0 \\
                                         0 & \lambda\\
                                       \end{array}
                                     \right)$
    & $\left(
        \begin{array}{cc}
          0 & \omega \\
          \omega & \mu \\
        \end{array}
      \right)$
     & $\lambda,\omega,\mu \in \k^{\times}$  \\
     \hline
   $15.3\quad \omega=0,\mu=0$ & $\left(
                                       \begin{array}{cc}
                                          0 & 0 \\
          0 & \lambda \\
                                       \end{array}
                                     \right)$
    & $\left(
        \begin{array}{cc}
          0& 0 \\
          0 & 0 \\
        \end{array}
      \right) $
     & $\lambda \in \k^{\times}$  \\
  \hline
  $15.4\quad \mu=0,\omega\neq 0$ & $\left(
        \begin{array}{cc}
          \omega & 0 \\
          0 & \lambda \\
        \end{array}
      \right) $& $\left(
                   \begin{array}{cc}
                     0 & \omega \\
                     \omega &  0\\
                   \end{array}
                 \right)$
       & $\lambda,\omega \in \k^{\times}$ \\
   \hline
   $15.5\quad \mu\neq 0,\omega = 0$ & $\left(
        \begin{array}{cc}
          0 &  0\\
          0 & \lambda \\
        \end{array}
      \right) $& $\left(
                   \begin{array}{cc}
                     0 & 0 \\
                     0 & \mu \\
                   \end{array}
                 \right)$
       & $\lambda,\mu\in \k^{\times}$ \\
   \hline
\end{tabular}.
\end{center}
For the cases listed above, we can choose corresponding $N^1,N^2$, such that there exists $A\in \mathrm{GL}_{\k}(2)$ such that $(\ref{eqs})$ holds. We have the following tabular:
\begin{center}
\begin{tabular}{|l|l|l|l|}
  \hline
  Cases & $N^1$ & $N^2$ & $A$ \\  \hline
Case $15.1$ & $\left(
                                       \begin{array}{cc}
                                          1& 0\\
          0 & 1 \\
                                       \end{array}
                                     \right)$&$\left(
        \begin{array}{cc}
          0& 1 \\
          1 & 0 \\
        \end{array}
      \right)$
    & $\left(
        \begin{array}{cc}
          \frac{1}{\omega}& \frac{-\mu}{\omega \sqrt{4\omega\lambda+\mu^2}}  \\
          0 &  \frac{2}{\sqrt{4\omega\lambda+\mu^2}}\\
        \end{array}
      \right) $\\
  \hline
  Case $15.2$ & $\left(
                                       \begin{array}{cc}
                                          1 & 0 \\
          0 & \frac{-1}{4} \\
                                       \end{array}
                                     \right)$ &$\left(
        \begin{array}{cc}
          0 & 1\\
          1 & 1 \\
        \end{array}
      \right)$
    & $\left(
        \begin{array}{cc}
          \frac{1}{\omega} & 0\\
          0 & \frac{1}{\mu} \\
        \end{array}
      \right)
     $\\
  \hline
  Case $15.3$ & $\left(
        \begin{array}{cc}
          0 & 0\\
          0& 1 \\
        \end{array}
      \right)$ & $\left(
                   \begin{array}{cc}
                     0 & 0 \\
                     0 & 0 \\
                   \end{array}
                 \right) $ & $\left(
                   \begin{array}{cc}
                     \lambda & 0 \\
                     0 & 1 \\
                   \end{array}
                 \right)$\\
   \hline   Case $15.4$
                  & $\left(
        \begin{array}{cc}
         1 & 0 \\
          0 & 1 \\
        \end{array}
      \right)$ & $\left(
                   \begin{array}{cc}
                     0 & 1 \\
                     1 & 0 \\
                   \end{array}
                 \right) $& $\left(
                   \begin{array}{cc}
                     \frac{1}{\omega} & 0 \\
                     0 & \frac{1}{\sqrt{\lambda\omega}}\\
                   \end{array}
                 \right)$\\
   \hline
   Case $15.5$ & $\left(
        \begin{array}{cc}
          0 &  0\\
          0 & 1 \\
        \end{array}
      \right)$ &$\left(
                   \begin{array}{cc}
                     0 & 0 \\
                     0 & 1 \\
                   \end{array}
                 \right)$& $\left(
                   \begin{array}{cc}
                     \frac{\lambda}{\mu^2}& 0 \\
                     0 & \frac{1}{\mu} \\
                   \end{array}
                 \right)$\\
   \hline
\end{tabular}.
\end{center}
By Theorem \ref{isom}, $\mathcal{A}$ is isomorphic to $\mathcal{B}_{11},\mathcal{B}_{10},\mathcal{B}_6,\mathcal{B}_{11}$ and $\mathcal{B}_8$, when $M^1, M^2$ belong to Case $15.1$, Case $15.2$, Case $15.3$, Case $15.4$ and Case $15.5$, respectively.

For Case $13$, we have $$M^1=\left(
                                       \begin{array}{cc}
                                         \nu+\lambda(\mu-\omega) &  \mu \\
                                         \mu & \frac{\mu}{\lambda}\\
                                       \end{array}
                                     \right), M^2= \left(
        \begin{array}{cc}
          \lambda\nu & \nu\\
          \nu & \omega \\
        \end{array}
      \right)$$ with $\lambda,\mu,\nu \in \k^{\times}, \omega\in \k$. Let $$N^1=\left(
       \begin{array}{cc}
         1+\frac{\lambda(\mu-w)}{\nu} & 1\\
         1 & \frac{\nu}{\lambda \mu}\\
       \end{array}
     \right), N^2=\left(
       \begin{array}{cc}
         \frac{\mu\lambda}{\nu} & 1 \\
         1 & \frac{w}{\mu}\\
       \end{array}
     \right).$$ There exists $A=\left(
       \begin{array}{cc}
         \frac{1}{\nu} & 0 \\
         0 & \frac{1}{\mu}\\
       \end{array}
     \right)$ such that  $(\ref{eqs})$ holds. Let $s=\frac{\lambda\mu}{\nu}, t=\frac{\omega}{\mu}$. Then $$N^1=\left(
       \begin{array}{cc}
         1+s-st & 1\\
         1 & \frac{1}{s}\\
       \end{array}
     \right), N^2=\left(
       \begin{array}{cc}
         s & 1 \\
         1 & t\\
       \end{array}
     \right)$$ with  $s\in \k^{\times}, t\in \k$.
By Theorem \ref{isom}, $\mathcal{A}$ is isomorphic to $\mathcal{B}(s,t)$.
\end{proof}

\begin{prop}\label{stone}
For any $s\in \k^{\times}$, we have \begin{enumerate}
\item $\mathcal{B}(s,s^{-1})\cong \mathcal{B}_8$, if $s\neq -1$;
\item $\mathcal{B}(s,s^{-1})\cong \mathcal{B}_6$, if $s=-1$.
\end{enumerate}
\end{prop}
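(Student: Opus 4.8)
The plan is to exploit the accidental degeneracy created by the substitution $t=s^{-1}$. First I would record that in $\mathcal{B}(s,s^{-1})$ the two defining matrices collapse to one: since $1+s-st=1+s-s\cdot s^{-1}=s$, one gets $M^1=M^2=M:=\begin{pmatrix} s & 1 \\ 1 & s^{-1}\end{pmatrix}$. The structural fact driving everything is that $\det M=s\cdot s^{-1}-1=0$, so $M$ is a nonzero symmetric matrix of rank one; as $\k$ is algebraically closed I can factor $M=vv^{T}$ with $v=(\sqrt{s},\,1/\sqrt{s})^{T}$. Then for any $A\in\mathrm{GL}_2(\k)$ we have $A^{T}MA=(A^{T}v)(A^{T}v)^{T}$, again symmetric of rank one, so the whole isomorphism problem is controlled by the single vector $A^{T}v$.

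Next I would invoke Theorem \ref{isom} with source $\mathcal{A}=\mathcal{B}(s,s^{-1})$, so that $M^1=M^2=M$. For the target $\mathcal{B}_8$ one has $N^1=N^2=N_0:=\begin{pmatrix}0&0\\0&1\end{pmatrix}$, and the two block equations read $(a_{11}+a_{12})N_0=A^{T}MA$ and $(a_{21}+a_{22})N_0=A^{T}MA$. These force $A(1,1)^{T}=p(1,1)^{T}$ with $p:=a_{11}+a_{12}=a_{21}+a_{22}$, together with $A^{T}MA=pN_0$. Comparing $A^{T}MA=(A^{T}v)(A^{T}v)^{T}$ with $p\,e_2e_2^{T}$ forces $A^{T}v=(0,\mu)^{T}$ and $\mu^{2}=p$; pairing $v$ against the eigenvector relation gives $\mu=v^{T}A(1,1)^{T}=p(v_1+v_2)$, whence $p(v_1+v_2)^2=1$. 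Since $v_1+v_2=(s+1)/\sqrt{s}$, this yields $p=s/(s+1)^2$, which is a well-defined nonzero scalar precisely when $s\neq-1$. In that case the explicit matrix $A=\begin{pmatrix} p & 0\\ -ps & p(1+s)\end{pmatrix}$, with $\det A=p^2(1+s)\neq0$, satisfies all block equations, proving part (1).

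For $s=-1$ I would argue the dichotomy. Here $v_1+v_2=0$, so $\mu=p(v_1+v_2)=0$ forces $p=\mu^2=0$; but then $A^{T}MA=0$ with $A$ invertible and $M\neq0$, a contradiction, so $\mathcal{B}(-1,-1)\not\cong\mathcal{B}_8$. Turning to $\mathcal{B}_6$, where $N^1=N_0$ and $N^2=0$, Theorem \ref{isom} instead gives $a_{11}N_0=A^{T}MA$ and $a_{21}N_0=A^{T}MA$, i.e.\ $a_{11}=a_{21}$ and $A^{T}MA=a_{11}N_0$. The condition $a_{11}=a_{21}$ says the first column of $A$ is a multiple of $(1,1)^{T}$, and because $v_1+v_2=0$ exactly when $s=-1$ the first coordinate $v^{T}(\text{first column})$ of $A^{T}v$ now vanishes for free. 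One then checks that $A=\begin{pmatrix}-1&0\\-1&1\end{pmatrix}$ gives $A^{T}MA=-N_0=a_{11}N_0$, establishing $\mathcal{B}(-1,-1)\cong\mathcal{B}_6$ and proving part (2).

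The conceptual heart of the argument, and the one point requiring care, is the realization that the scalar $v_1+v_2=(s+1)/\sqrt{s}$ governs both cases at once: it is nonzero exactly when $s\neq-1$, which is precisely when the eigenvalue $p$ demanded by $\mathcal{B}_8$ can be nonzero, and it vanishes exactly when $s=-1$, which is precisely what lets the off-diagonal obstruction for $\mathcal{B}_6$ disappear automatically. The residual work—checking that the exhibited matrices $A$ lie in $\mathrm{GL}_2(\k)$ and that $A^{T}MA$ equals the prescribed multiple of $N_0$—is a routine direct computation that I would relegate to a verification.
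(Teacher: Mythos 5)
Your proposal is correct and is in substance the same as the paper's proof: both reduce the claim to Theorem \ref{isom} and settle it by exhibiting an explicit base-change matrix in $\mathrm{GL}_2(\k)$ satisfying the block equations, and your matrices $\left(\begin{smallmatrix} p & 0\\ -ps & p(1+s)\end{smallmatrix}\right)$ with $p=s/(s+1)^2$ and $\left(\begin{smallmatrix}-1&0\\-1&1\end{smallmatrix}\right)$ do verify (the paper's matrices go in the opposite direction, $\mathcal{B}_8\to\mathcal{B}(s,s^{-1})$ and $\mathcal{B}_6\to\mathcal{B}(-1,-1)$, which is immaterial). The only real difference is presentational: you derive the matrix systematically from the rank-one factorization $M=vv^{T}$, which also explains why $s=-1$ is the dividing line and yields the bonus observation $\mathcal{B}(-1,-1)\not\cong\mathcal{B}_8$, whereas the paper simply writes the matrices down and checks them.
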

\begin{proof}
By definition, $\mathcal{B}_8$ and $\mathcal{B}(s,s^{-1})$ are defined by crisscrossed ordered $2$-tuples $(M^1,M^2)$ and $(N^1,N^2)$, where
\begin{align*}
M^1= \left(
       \begin{array}{cc}
         0 & 0\\
         0 & 1\\
       \end{array}
     \right)=M^2 \quad
\text{and}\quad N^1= \left(
       \begin{array}{cc}
         s & 1\\
         1 & s^{-1}\\
       \end{array}
     \right)=N^2.
\end{align*}
When $s\neq -1$, we can choose $A=(a_{ij})_{2\times 2}=\left(
       \begin{array}{cc}
         s & \frac{1}{s}+2\\
         s+1 & \frac{s+1}{s}\\
       \end{array}
     \right)$ in $\mathrm{GL}_{\k}(2)$ such that $(\ref{eqs})$ hold, i.e.,
     \begin{align*}
     \left(
       \begin{array}{cc}
         a_{11}s+a_{12}s & a_{11}+a_{12}\\
         a_{11}+a_{12} & a_{11}s^{-1}+a_{12}s^{-1}\\
         a_{21}s+a_{22}s & a_{21}+a_{22}\\
         a_{21}+a_{22} & a_{21}s^{-1}+a_{22}s^{-1}\\
       \end{array}
     \right)= \left(
       \begin{array}{cc}
         a_{21}^2 & a_{21}a_{22}\\
         a_{21}a_{22} & a_{22}^{2}\\
         a_{21}^2 & a_{21}a_{22}\\
         a_{21}a_{22} & a_{22}^2\\
       \end{array}
     \right).
     \end{align*}
     By Theorem \ref{isom}, $\mathcal{B}(s,s^{-1})\cong \mathcal{B}_8$, when $s\neq -1$.

     When $s=-1$, $\mathcal{B}(s,s^{-1})=\mathcal{B}(-1,-1)$ is defined by the crisscross ordered $2$-tuple $(N^1,N^2)$ of $2\times 2$ matrixes, where
   \begin{align*}
    N^1= \left(
       \begin{array}{cc}
         -1 & 1\\
         1 & -1\\
       \end{array}
     \right)=N^2.
\end{align*}
By definition, $\mathcal{B}_6$ is defined by the crisscrossed ordered $2$-tuple $(M^1,M^2)$ with
\begin{align*}
M^1= \left(
       \begin{array}{cc}
         0 & 0\\
         0 & 1\\
       \end{array}
     \right), M^2=\left(
       \begin{array}{cc}
         0 & 0\\
         0 & 0\\
       \end{array}
     \right).
\end{align*}
We can choose $A=(a_{ij})_{2\times 2}=\left(
       \begin{array}{cc}
         -1 & 0\\
         1 & -1\\
       \end{array}
     \right)$ in $\mathrm{GL}_{\k}(2)$ such that $(\ref{eqs})$ hold, i.e.,
       \begin{align*}
     \left(
       \begin{array}{cc}
         -a_{11}-a_{12}s & a_{11}+a_{12}\\
         a_{11}+a_{12} & -a_{11}-a_{12}\\
         -a_{21}-a_{22}s & a_{21}+a_{22}\\
         a_{21}+a_{22} & -a_{21}-a_{22}\\
       \end{array}
     \right)= \left(
       \begin{array}{cc}
         a_{21}^2 & a_{21}a_{22}\\
         a_{21}a_{22} & a_{22}^{2}\\
         0 & 0\\
         0 & 0\\
       \end{array}
     \right).
     \end{align*}
     By Theorem \ref{isom}, $\mathcal{B}(-1,-1)\cong \mathcal{B}_6$.
\end{proof}

\section{cohomology graded algebras of DG free algebras}
In this section, we will compute the cohomology graded algebra of DG free algebras with two degree one generators.
Due to the classifications finished in the previous section, we only need to compute $$H(\mathcal{B}_1),H(\mathcal{B}_2),\cdots, H(\mathcal{B}_{11}), H(\mathcal{B}(s,t))\,\,\text{with}\,\,st\neq 1, s\in \k^{\times},$$ by
Proposition \ref{nonsym}, Proposition \ref{symone}, Proposition \ref{symtwo} and Proposition \ref{stone}. For any cochain DG algebra $\mathcal{A}$ and cocycle element $z\in \mathrm{ker}(\partial_{\mathcal{A}}^i)$, we write $\lceil z \rceil$ as the cohomology class in $H(\mathcal{A})$ represented by $z$. The following proposition gives all possible cohomology graded algebras for non-trivial DG free algebras with two degree one generators.

\begin{prop}\label{cohomology}
We have \begin{enumerate}
\item $H(\mathcal{B}_1)=\k$;
\item $H(\mathcal{B}_2)=\k$;
\item $H(\mathcal{B}_3)=\k$;
\item $H(\mathcal{B}_4)=\k$;
\item $H(\mathcal{B}_5)=\k[\lceil x_2\rceil]$;
\item $H(\mathcal{B}_6)=\k[\lceil x_2\rceil,\lceil x_1x_2+x_2x_1\rceil]/(\lceil x_2\rceil^2)$;
\item $H(\mathcal{B}_7)=\k $;
\item $H(\mathcal{B}_8)=\k[ \lceil x_1-x_2\rceil] $;
\item $H(\mathcal{B}_9)=\k$;
\item $H(\mathcal{B}_{10})=\k$;
\item $H(\mathcal{B}_{11})=\k$;
\item $H(\mathcal{B}(s,t))=\k$, when $st\neq 1$ and $s\in \k^{\times}$.
\end{enumerate}
\end{prop}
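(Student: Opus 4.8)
The plan is to compute $H(\mathcal{A})$ degree by degree, using that the word-length grading of $\mathcal{A}^{\#}=\k\langle x_1,x_2\rangle$ coincides with the cohomological grading, so $\mathcal{A}$ is the cochain complex $\k=\mathcal{A}^0\to\mathcal{A}^1\to\mathcal{A}^2\to\cdots$ with $\dim_{\k}\mathcal{A}^m=2^m$ and $H^m=\ker\partial^m/\mathrm{im}\,\partial^{m-1}$. By Proposition \ref{nonsym}, Proposition \ref{symone}, Proposition \ref{symtwo} and Proposition \ref{stone} it suffices to carry this out for $\mathcal{B}_1,\dots,\mathcal{B}_{11}$ and $\mathcal{B}(s,t)$, after reading off each $\partial_{\mathcal{A}}$ from its defining pair $(M^1,M^2)$ (for instance $\partial_{\mathcal{B}_6}(x_1)=x_2^2$ and $\partial_{\mathcal{B}_6}(x_2)=0$).

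The cleanest route I would take recognizes each $\mathcal{A}$ as a cobar construction. Writing $V=\mathcal{A}^1$, the quadratic differential $\partial_{\mathcal{A}}\colon V\to V\otimes V$ dualizes to a product on $\bar E:=V^*$, namely $\xi_j\xi_k=\sum_i m^i_{jk}\xi_i$, and the crisscross condition of Definition \ref{crisscrossed} is exactly the associativity of the resulting $3$-dimensional augmented algebra $E=\k\oplus\bar E$; thus $\mathcal{A}=\Omega(E^{\vee})$. Bar--cobar duality then furnishes an isomorphism of graded algebras $H(\mathcal{A})\cong\mathrm{Ext}^{\bullet}_E(\k,\k)$, reducing every computation to identifying the small algebra $E$ and its Yoneda algebra.

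Carrying this out, $E$ always falls into one of three types. When $E$ has $\k$ as an augmentation-compatible direct factor, the trivial module is projective and $\mathrm{Ext}_E(\k,\k)=\k$; this covers $\mathcal{B}_1,\mathcal{B}_2,\mathcal{B}_3,\mathcal{B}_4,\mathcal{B}_7,\mathcal{B}_9,\mathcal{B}_{10},\mathcal{B}_{11}$ and $\mathcal{B}(s,t)$ with $st\neq1$ (and $st=1$ is precisely where this factor degenerates, cf. Proposition \ref{stone}). When $E\cong\k\times\k[\xi]/(\xi^2)$ with the augmentation on the second factor, $H(\mathcal{A})=\mathrm{Ext}_{\k[\xi]/(\xi^2)}(\k,\k)=\k[z]$ with $|z|=1$; this is $\mathcal{B}_5$ (with $z=\lceil x_2\rceil$) and $\mathcal{B}_8$ (with $z=\lceil x_1-x_2\rceil$, a cocycle since $\partial_{\mathcal{B}_8}(x_1-x_2)=x_2^2-x_2^2=0$). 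Finally $\mathcal{B}_6$ has $E\cong\k[\xi]/(\xi^3)$, whose $2$-periodic minimal resolution gives $H(\mathcal{B}_6)=\k[z,w]/(z^2)$ with $|z|=1,|w|=2$, realized by $z=\lceil x_2\rceil$ and $w=\lceil x_1x_2+x_2x_1\rceil$; here $\lceil x_2\rceil^2=\lceil x_2^2\rceil=\lceil\partial_{\mathcal{B}_6}x_1\rceil=0$, which is exactly the distinction from $\mathcal{B}_5$, where $x_2^2$ is not a coboundary.

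To keep the argument self-contained one can instead verify each case by hand. For the acyclic cases I would exhibit a contracting homotopy $h$ of degree $-1$ with $\partial_{\mathcal{A}} h+h\partial_{\mathcal{A}}=\mathrm{id}$ on $\mathcal{A}^{\ge1}$: when $\partial$ is triangular this is immediate (for $\mathcal{B}_1$, where $\partial x_i=x_ix_1$, the map $h$ that strips a trailing $x_1$ with sign $(-1)^{|u|}$ works, as a short induction on word length shows), and several cases reduce to this after a linear change of generators (for $\mathcal{B}_{11}$, setting $y_1=x_1+x_2$, $y_2=x_1-x_2$ gives $\partial y_i=y_i^2$). For $\mathcal{B}_5,\mathcal{B}_8$ one identifies the degree-one cocycle and checks that the inclusion of the polynomial subalgebra it generates is a quasi-isomorphism, and for $\mathcal{B}_6$ one computes $\ker\partial^m$ and $\mathrm{im}\,\partial^{m-1}$ directly. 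I expect the main obstacle to be uniform control over all (infinitely many) degrees at once: the naive homotopy fails for the generic differentials ($\mathcal{B}_7,\mathcal{B}_9,\mathcal{B}_{10},\mathcal{B}(s,t)$), forcing either a well-chosen change of variables or the cobar identification above, where the delicate sign bookkeeping must be handled carefully; and for $\mathcal{B}_6$ the subtle point is proving that the only relation is $\lceil x_2\rceil^2=0$, i.e. that $w$ is a free polynomial generator, which is precisely the content of the $2$-periodicity over $\k[\xi]/(\xi^3)$.
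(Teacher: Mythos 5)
Your cobar route is genuinely different from the paper's proof (which, for each of the twelve cases, writes a degree~$i$ cocycle as $x_1a_1+x_2a_2$, uses freeness of $\mathcal{A}^{\#}$ to split $\partial(x_1a_1+x_2a_2)=0$ into two equations, and exhibits an explicit $\partial$-preimage as a linear combination of $a_1,a_2$ --- e.g.\ $\tfrac{1}{1-st}(a_1-sa_2)$ for $\mathcal{B}(s,t)$ --- with inductions on degree for $\mathcal{B}_5,\mathcal{B}_6,\mathcal{B}_8$; this coefficient-extraction device is what handles the ``generic'' differentials that defeat your naive homotopy, with no change of variables needed). Your two structural observations are correct and attractive: the crisscross identity is exactly associativity of the product $\xi_j\xi_k=\sum_i m^i_{jk}\xi_i$ on $E=\k\oplus V^*$, and $H(\mathcal{A})\cong \mathrm{Ext}_E(\k,\k)$ since $\mathcal{A}$ is the cobar construction on the dual of the finite-dimensional algebra $E$. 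The identifications $E\cong\k[\xi]/(\xi^3)$ for $\mathcal{B}_6$, $E\cong\k\times\k[\xi]/(\xi^2)$ with the augmentation on the second factor for $\mathcal{B}_5$ and $\mathcal{B}_8$, and the fact that $\bar E$ is unital (so $E\cong\k\times\bar E$ with $\k$ the augmentation factor) for $\mathcal{B}_3,\mathcal{B}_4,\mathcal{B}_7,\mathcal{B}_9,\mathcal{B}_{10},\mathcal{B}_{11}$ and $\mathcal{B}(s,t)$ with $st\neq1$ all check out.

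The gap is in your trichotomy, at the two non-symmetric classes. For $\mathcal{B}_1$ one finds $\xi_1^2=\xi_1$, $\xi_2\xi_1=\xi_2$, $\xi_1\xi_2=\xi_2^2=0$, and for $\mathcal{B}_2$ the transposed relations; in both cases $E$ is the path algebra of the $A_2$ quiver on the orthogonal idempotents $\xi_1$ and $1-\xi_1$, which is indecomposable as a ring, so $\k$ is \emph{not} an augmentation-compatible direct factor of $E$. Worse, for one of the two orientations (which one depends on your left/right convention) the trivial module is the simple at the source of the arrow and is not projective --- its projective cover is two-dimensional --- so the stated reason for $\mathrm{Ext}_E(\k,\k)=\k$ fails. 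The conclusion survives, but for a different reason you would need to supply: the first syzygy of $\k$ is the simple projective at the other vertex, there are no loops at the augmentation vertex, and $E$ is hereditary, whence $\mathrm{Ext}^{\ge 1}_E(\k,\k)=0$. Your hands-on fallback does not close this hole either, since the explicit trailing-$x_1$ homotopy is written only for $\mathcal{B}_1$ (for $\mathcal{B}_2$, where $\partial x_2=x_1x_2$, one must strip a leading $x_1$ instead), and you concede that the homotopy method does not reach $\mathcal{B}_7,\mathcal{B}_9,\mathcal{B}_{10},\mathcal{B}(s,t)$. With the $A_2$ cases repaired as above, the argument goes through.
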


\begin{proof}
(1)It is easy to see that $H^0(\mathcal{B}_1)=\k$ and $H^1(\mathcal{B}_1)=0$ since $\partial_{\mathcal{B}_1}(x_1)=x_1^2$ and $\partial_{\mathcal{B}_1}(x_2)=x_2x_1$. One only need to prove that $H^i(\mathcal{B}_1)=0$, for any $i\ge 2$. For any cocycle element in $\mathcal{B}_1^i$, we may write it as $a_1x_1+a_2x_2$ for some $a_1,a_2\in \mathcal{B}_1^{i-1}$. We have
\begin{align*}
0=&\partial_{\mathcal{B}_1}[a_1x_1+a_2x_2]\\
=&\partial_{\mathcal{B}_1}(a_1)x_1+(-1)^ia_1x_1^2+\partial_{\mathcal{B}_1}(a_2)x_2+(-1)^ia_2x_2x_1\\
=&[\partial_{\mathcal{B}_1}(a_1)+(-1)^ia_1x_1+(-1)^ia_2x_2]x_1+\partial_{\mathcal{B}_1}(a_2)x_2.
\end{align*}
Hence $\partial_{\mathcal{B}_1}(a_1)+(-1)^ia_1x_1+(-1)^ia_2x_2=0$ and $\partial_{\mathcal{B}_1}(a_2)=0$. Then $$\partial_{\mathcal{B}_1}[(-1)^{i-1}a_1]=a_1x_1+a_2x_2.$$ So $H^{i}(\mathcal{B}_1)=0$, for any $i\ge 2$.  Hence $H(\mathcal{B}_1)=\k$.

(2)Since $\partial_{\mathcal{B}_2}(x_1)=x_1^2$ and $\partial_{\mathcal{B}_2}(x_2)=x_1x_2$, one sees that $H^0(\mathcal{B}_2)=\k$ and $H^1(\mathcal{B}_2)=0$. It suffices to show that $H^i(\mathcal{B}_2)=0$ for any $i\ge 2$. For any cocycle element in $\mathcal{B}_2^i$, we may write it as $x_1a_1+x_2a_2$ for some $a_1,a_2\in \mathcal{B}_2^{i-1}$. We have
\begin{align*}
0=&\partial_{\mathcal{B}_2}[x_1a_1+x_2a_2]\\
=&x_1^2a_1-x_1\partial_{\mathcal{B}_2}(a_1)+x_1x_2a_2-x_2\partial_{\mathcal{B}_2}(a_2)\\
=&x_1[x_1a_1-\partial_{\mathcal{B}_2}(a_1)+x_2a_2]-x_2\partial_{\mathcal{B}_2}(a_2).
\end{align*}
Hence $\partial_{\mathcal{B}_2}(a_1)=x_1a_1+x_2a_2$ and $\partial_{\mathcal{B}_2}(a_2)=0$. So $H^{i}(\mathcal{B}_2)=0$, for any $i\ge 2$.  Hence $H(\mathcal{B}_2)=\k$.

(3)The differential of $\mathcal{B}_3$ is defined by $\partial_{\mathcal{B}_3}(x_1)=x_1^2, \partial_{\mathcal{B}_3}(x_2)=x_1x_2+x_2x_1$. Obviously, we have $H^0(\mathcal{B}_3)=\k$ and $H^1(\mathcal{B}_3)=0$. For any cocycle element in $\mathcal{B}_3^i$, we may write it as $x_1a_1+x_2a_2$ for some $a_1,a_2\in \mathcal{B}_1^{i-1}$. Since
\begin{align*}
0=&\partial_{\mathcal{B}_3}[x_1a_1+x_2a_2]\\
=&x_1^2a_1-x_1\partial_{\mathcal{B}_3}(a_1)+(x_1x_2+x_2x_1)a_2-x_2\partial_{\mathcal{B}_3}(a_2)\\
=&x_1[x_1a_1-\partial_{\mathcal{B}_3}(a_1)+x_2a_2]+x_2(x_1a_2-\partial_{\mathcal{B}_3}(a_2)),
\end{align*}
we have $\partial_{\mathcal{B}_3}(a_1)=x_1a_1+x_2a_2$ and $\partial_{\mathcal{B}_3}(a_2)=x_1a_2$. So $H^{i}(\mathcal{B}_3)=0$, for any $i\ge 2$.  Hence $H(\mathcal{B}_3)=\k$.

(4)Since $\partial_{\mathcal{B}_4}(x_1)=x_1^2$ and $\partial_{\mathcal{B}_4}(x_2)=x_2^2$, one sees that $H^0(\mathcal{B}_2)=\k$ and $H^1(\mathcal{B}_2)=0$. We should prove that $H^i(\mathcal{B}_4)=0$, for any $i\ge 2$.  For any cocycle element in $\mathcal{B}_4^i$, we may write it as $x_1a_1+x_2a_2$ for some $a_1,a_2\in \mathcal{B}_4^{i-1}$. We have
\begin{align*}
0=&\partial_{\mathcal{B}_4}[x_1a_1+x_2a_2]\\
=&x_1^2a_1-x_1\partial_{\mathcal{B}_4}(a_1)+x_2^2a_2-x_2\partial_{\mathcal{B}_4}(a_2)\\
=&x_1[x_1a_1-\partial_{\mathcal{B}_4}(a_1)]+x_2[x_2a_2-\partial_{\mathcal{B}_4}(a_2)].
\end{align*}
Hence $\partial_{\mathcal{B}_4}(a_1)=x_1a_1$ and $\partial_{\mathcal{B}_4}(a_2)=x_2a_2$. Since $\partial_{\mathcal{B}_4}(a_1+a_2)=x_1a_1+x_2a_2$, we have $H^i(\mathcal{B}_4)=0$ for any $i\ge 2$. So $H(\mathcal{B}_4)=\k$.

(5)The differential of $\mathcal{B}_5$ is defined by $\partial_{\mathcal{B}_5}(x_1)=x_1^2, \partial_{\mathcal{B}_5}(x_2)=0$. Obviously, we have $H^0(\mathcal{B}_5)=\k$ and $H^1(\mathcal{B}_5)=\k\lceil x_2\rceil$. We want to show inductively that $H^j(\mathcal{B}_5)=\k\lceil x_2^j\rceil$ for any $j\ge 1$. Suppose that we have proved that $H^{i}(\mathcal{B}_5)=\k\lceil x_2^i\rceil$. For any cocycle element in $\mathcal{B}_5^{i+1}$, we may write it as $x_1a_1+x_2a_2$, $a_1,a_2\in \mathcal{B}_5^i$. Since
\begin{align*}
0=&\partial_{\mathcal{B}_5}[x_1a_1+x_2a_2]\\
=&x_1^2a_1-x_1\partial_{\mathcal{B}_5}(a_1)-x_2\partial_{\mathcal{B}_5}(a_2)\\
=&x_1[x_1a_1-\partial_{\mathcal{B}_5}(a_1)]-x_2\partial_{\mathcal{B}_5}(a_2),
\end{align*}
we have $x_1a_1=\partial_{\mathcal{B}_5}(a_1)$ and $\partial_{\mathcal{B}_5}(a_2)=0$. By the induction hypothesis, we have $H^i(\mathcal{B}_5)=\k\lceil x_2^i\rceil$. Hence there exists some $l\in \k$ such that $\lceil a_2\rceil =l\lceil x_2^i\rceil$.
  So $$\lceil x_1a_1+x_2a_2\rceil=\lceil \partial_{\mathcal{B}_5}(a_1) +x_2a_2\rceil = \lceil x_2a_2\rceil=\lceil x_2\rceil \cdot\lceil a_2\rceil =l\lceil x_2^{i+1}\rceil. $$
Then $H^{i+1}(\mathcal{B}_5)=\k\lceil x_2^{i+1}\rceil$. By the induction above, we get $H(\mathcal{B}_5)=\k[\lceil x_2\rceil]$.

(6)The differential of $\mathcal{B}_6$ is defined by $\partial_{\mathcal{B}_6}(x_1)=x_2^2, \partial_{\mathcal{B}_6}(x_2)=0$. It is easy for one to see that $H^0(\mathcal{B}_6)=\k$ and $H^1(\mathcal{B}_6)=\k\lceil x_2\rceil$. Since $\mathcal{B}_6^2=\k x_1^2\oplus \k x_1x_2\oplus \k x_2x_1\oplus \k x_2^2$, any cocycle element in $\mathcal{B}_6^2$ can be written as $l_1x_1^2+l_2x_1x_2+l_3x_2x_1+l_4x_2^2$ for some $l_1,l_2,l_3,l_4\in \k$. We have
\begin{align*}
0&=\partial_{\mathcal{B}_6}(l_1x_1^2+l_2x_1x_2+l_3x_2x_1+l_4x_2^2)\\
&=l_1x_2^2x_1-l_1x_1x_2^2+l_2x_2^3-l_3x_2^3\\
&=l_1(x_2^2x_1-x_1x_2^2)+(l_2-l_3)x_2^3.
\end{align*}
We have $l_1=0$ and $l_2=l_3$. Hence $\mathrm{ker}(\partial_{\mathcal{B}_6}^2)= \k(x_1x_2+x_2x_1)\oplus \k x_2^2$. By the definition of $\partial_{\mathcal{B}_6}$, one sees that $\mathrm{im}(\partial_{\mathcal{B}_6}^1)=\k x_2^2$. So $H^2(\mathcal{B}_6)=\k \lceil x_1x_2+x_2x_1\rceil$.
Assume that we have proved $H^{2i-1}(\mathcal{B}_6)=\k \lceil x_2(x_1x_2+x_2x_1)^{i-1}\rceil$ and $H^{2i}(\mathcal{B}_6)=\k \lceil (x_1x_2+x_2x_1)^i\rceil$, $i\ge 1$. We should show that \begin{align*}
H^{2i+1}(\mathcal{B}_6)&=\k \lceil x_2(x_1x_2+x_2x_1)^i\rceil  \\
 \text{and}\,\, H^{2i+2}(\mathcal{B}_6)&=\k \lceil (x_1x_2+x_2x_1)^{i+1}\rceil.
\end{align*}
 For any cocycle element in $\mathcal{B}_6^{2i+1}$, we may write it as $x_1a_1+x_2a_2$, $a_1,a_2\in \mathcal{B}_6^{2i}$.
 We have \begin{align*}
 0=&\partial_{\mathcal{B}_6}(x_1a_1+x_2a_2)\\
 =&x_2^2a_1-x_1\partial_{\mathcal{B}_6}(a_1)-x_2\partial_{\mathcal{B}_6}(a_2) \\
 =&x_2[x_2a_1-\partial_{\mathcal{B}_6}(a_2)]-x_1\partial_{\mathcal{B}_6}(a_1).
 \end{align*}
 Then $\partial_{\mathcal{B}_6}(a_2)=x_2a_1$ and $\partial_{\mathcal{B}_6}(a_1)=0$. Since $Z^{2i}(\mathcal{B}_6)\cong H^{2i}(\mathcal{B}_6)\oplus B^{2i}(\mathcal{B}_6)$, we may let $a_1=l(x_1x_2+x_2x_1)^i+\partial_{\mathcal{B}_6}(\chi)$ for some $l\in \k$ and $\chi\in \mathcal{B}_6^{2i-1}$.
 We have $\partial_{\mathcal{B}_6}(a_2)=lx_2(x_1x_2+x_2x_1)^i+x_2\partial_{\mathcal{B}_6}(\chi)$. This implies that $l=0$ and $a_2=-x_2\chi+t(x_1x_2+x_2x_1)^i+\partial_{\mathcal{B}_6}(\lambda)$, for some $t\in \k$ and $\lambda\in \mathcal{B}_6^{2i-1}$. Then
 \begin{align*}
 \lceil x_1a_1+x_2a_2\rceil & =\lceil x_1 \partial_{\mathcal{B}_6}(\chi)-x_2^2\chi +tx_2(x_1x_2+x_2x_1)^i+x_2\partial_{\mathcal{B}_6}(\lambda)\rceil \\
 &=\lceil  \partial_{\mathcal{B}_6}(-x_1\chi-x_2\lambda)  +tx_2(x_1x_2+x_2x_1)^i              \rceil \\
 &=t\lceil x_2(x_1x_2+x_2x_1)^i   \rceil.
 \end{align*}
 So $H^{2i+1}(\mathcal{B}_6)=\k\lceil x_2(x_1x_2+x_2x_1)^i   \rceil$.

 Now, let $x_1b_1+x_2b_2$ be an arbitrary cocycle element in $\mathcal{B}_6^{2i+2}$. Then we have
   \begin{align*}
 0=&\partial_{\mathcal{B}_6}(x_1b_1+x_2b_2)\\
 =&x_2^2b_1-x_1\partial_{\mathcal{B}_6}(b_1)-x_2\partial_{\mathcal{B}_6}(b_2) \\
 =&x_2[x_2b_1-\partial_{\mathcal{B}_6}(b_2)]-x_1\partial_{\mathcal{B}_6}(b_1).
 \end{align*}
 So $\partial_{\mathcal{B}_6}(b_2)=x_2b_1$ and $\partial_{\mathcal{B}_6}(b_1)=0$. Since $Z^{2i+1}(\mathcal{B}_6)\cong H^{2i+1}(\mathcal{B}_6)\oplus B^{2i+1}(\mathcal{B}_6)$, we may let $b_1=rx_2(x_1x_2+x_2x_1)^i+\partial_{\mathcal{B}_6}(\omega)$ for some $r\in \k$ and $\omega\in \mathcal{B}_6^{2i}$.
 Then $\partial_{\mathcal{B}_6}(b_2)=rx_2^2(x_1x_2+x_2x_1)^i+x_2\partial_{\mathcal{B}_6}(\omega)$. So $$b_2=rx_1(x_1x_2+x_2x_1)^i-x_2\omega+sx_2(x_1x_2+x_2x_1)^i+\partial_{\mathcal{B}_6}(\varphi)$$ for some $s\in \k$ and $\varphi\in \mathcal{B}_6^{2i}$
 Therefore,
 \begin{align*}
 \lceil x_1b_1+x_2b_2\rceil&= \lceil r(x_1x_2+x_2x_1)^{i+1}+\partial_{\mathcal{B}_6}[-x_1\omega+sx_1(x_1x_2+x_2x_1)^i-x_2\varphi]\rceil\\
                           &= r\lceil (x_1x_2+x_2x_1)^{i+1}\rceil.
 \end{align*}
 Thus $H^{2i+2}(\mathcal{B}_6)=\k\lceil (x_1x_2+x_2x_1)^{i+1}   \rceil$. By the induction above, we obtain that
 \begin{align*}
H^{2n-1}(\mathcal{B}_6)&=\k \lceil x_2(x_1x_2+x_2x_1)^{n-1}\rceil  \\
 \text{and}\,\, H^{2n}(\mathcal{B}_6)&=\k \lceil (x_1x_2+x_2x_1)^{n}\rceil, \forall n\ge 1.
\end{align*}
 Since $x_2(x_1x_2+x_2x_1)-(x_1x_2+x_2x_1)x_2=x_2^2x_1-x_1x_2^2=\partial_{\mathcal{B}_6}(x_1^2)$, we have
 $$\lceil x_2\rceil \cdot \lceil x_1x_2+x_2x_1\rceil = \lceil x_1x_2+x_2x_1\rceil\cdot \lceil x_2\rceil$$ in $H(\mathcal{B}_6)$. Hence $H(\mathcal{B}_6)=\k[\lceil x_2\rceil,\lceil x_1x_2+x_2x_1\rceil]/(\lceil x_2\rceil^2)$.

 (7)Since $\partial_{\mathcal{B}_7}(x_1)=x_1x_2+x_2x_1+x_2^2$ and $\partial_{\mathcal{B}_7}(x_2)=x_2^2$, one sees that $H^0(\mathcal{B}_7)=\k$ and $H^1(\mathcal{B}_7)=0$. It suffices to show that $H^i(\mathcal{B}_7)=0$ for any $i\ge 2$. For any cocycle element in $\mathcal{B}_7^i$, we may write it as $x_1a_1+x_2a_2$ for some $a_1,a_2\in \mathcal{B}_7^{i-1}$. We have
\begin{align*}
0=&\partial_{\mathcal{B}_7}[x_1a_1+x_2a_2]\\
=&(x_1x_2+x_2x_1+x_2^2)a_1-x_1\partial_{\mathcal{B}_7}(a_1)+x_2^2a_2-x_2\partial_{\mathcal{B}_7}(a_2)\\
=&x_1[x_2a_1-\partial_{\mathcal{B}_7}(a_1)]+x_2[x_1a_1+x_2a_1+x_2a_2-\partial_{\mathcal{B}_7}(a_2)].
\end{align*}
Hence $\partial_{\mathcal{B}_7}(a_1)=x_2a_1$ and $\partial_{\mathcal{B}_2}(a_2)=x_1a_1+x_2a_1+x_2a_2$. Then $$\partial_{\mathcal{B}_2}(a_2-a_1)=x_1a_1+x_2a_2.$$  So $H^{i}(\mathcal{B}_2)=0$, for any $i\ge 2$.  Hence $H(\mathcal{B}_2)=\k$.

(8)The differential of $\mathcal{B}_8$ is defined by $\partial_{\mathcal{B}_8}(x_1)=x_2^2, \partial_{\mathcal{B}_8}(x_2)=x_2^2$. It is easy for one to see that $H^0(\mathcal{B}_8)=\k$ and $H^1(\mathcal{B}_8)=\k\lceil x_2-x_1\rceil$. We want to show inductively that $H^j(\mathcal{B}_8)=\k\lceil (x_2-x_1)^j\rceil$ for any $j\ge 1$. Suppose that we have proved that $H^{i}(\mathcal{B}_8)=\k\lceil (x_2-x_1)^i\rceil$. For any cocycle element in $\mathcal{B}_8^{i+1}$, we may write it as $x_1a_1+x_2a_2$, $a_1,a_2\in \mathcal{B}_8^i$. Since
\begin{align*}
0=&\partial_{\mathcal{B}_8}[x_1a_1+x_2a_2]\\
=&x_2^2a_1-x_1\partial_{\mathcal{B}_8}(a_1)+x_2^2a_2-x_2\partial_{\mathcal{B}_8}(a_2)\\
=&-x_1\partial_{\mathcal{B}_8}(a_1)+x_2[x_2a_1+x_2a_2-\partial_{\mathcal{B}_8}(a_2)],
\end{align*}
we have $x_2a_1+x_2a_2=\partial_{\mathcal{B}_8}(a_2)$ and $\partial_{\mathcal{B}_8}(a_1)=0$. By the induction hypothesis, we have $H^i(\mathcal{B}_8)=\k\lceil (x_2-x_1)^i\rceil$. So $\lceil a_1\rceil =l\lceil (x_2-x_1)^i\rceil$, for some $l\in \k$.
  So \begin{align*}
  \lceil x_1a_1+x_2a_2\rceil & =\lceil x_2a_1+x_2a_2 +x_1a_1-x_2a_1 \rceil\\
   &= \lceil \partial_{\mathcal{B}_8}(a_2)+(x_1-x_2)a_1\rceil\\
   &=\lceil (x_1-x_2)a_1 \rceil \\
   &=l\lceil (x_2-x_1)^{i+1}\rceil
   \end{align*}
and $H^{i+1}(\mathcal{B}_8)=\k\lceil (x_2-x_1)^{i+1}\rceil$. By the induction above, we have $$H(\mathcal{B}_8)=\k[\lceil x_2-x_1\rceil].$$

(9)We have $\partial_{\mathcal{B}_9}(x_1)=x_1^2+x_1x_2+x_2x_1$ and $\partial_{\mathcal{B}_9}(x_2)=x_2^2$. Obviously, $H^0(\mathcal{B}_9)=\k$ and $H^1(\mathcal{B}_9)=0$. Any cocycle element in $\mathcal{B}_9^i$ can be written by $x_1a_1+x_2a_2$ for some $a_1,a_2\in \mathcal{B}_9^{i-1}, i\ge 2$. We have
\begin{align*}
0=&\partial_{\mathcal{B}_9}[x_1a_1+x_2a_2]\\
=&(x_1^2+x_1x_2+x_2x_1)a_1-x_1\partial_{\mathcal{B}_9}(a_1)+x_2^2a_2-x_2\partial_{\mathcal{B}_9}(a_2)\\
=&x_1[x_1a_1+x_2a_1-\partial_{\mathcal{B}_9}(a_1)]+x_2[x_1a_1+x_2a_2-\partial_{\mathcal{B}_9}(a_2)].
\end{align*}
Then $\partial_{\mathcal{B}_9}(a_1)=x_1a_1+x_2a_1$ and $\partial_{\mathcal{B}_9}(a_2)=x_1a_1+x_2a_2$, which implies that $H^i(\mathcal{B}_9)=0$ and hence $H(\mathcal{B}_9)=\k$.

(10)The differential of $\mathcal{B}_{10}$ is defined by $$\partial_{\mathcal{B}_{10}}(x_1)=x_1^2-\frac{1}{4}x_2^2, \quad \partial_{\mathcal{B}_{10}}(x_2)=x_1x_2+x_2x_1+x_2^2.$$ Obviously,  $H^0(\mathcal{B}_{10})=\k$ and $H^1(\mathcal{B}_{10})=0$. For any $x_1a_1+x_2a_2\in Z^{i}(\mathcal{B}_{10}), i\ge 2$, we have
\begin{align*}
0=&\partial_{\mathcal{B}_{10}}[x_1a_1+x_2a_2]\\
=&(x_1^2-\frac{1}{4}x_2^2)a_1-x_1\partial_{\mathcal{B}_{10}}(a_1)+[x_1x_2+x_2x_1+x_2^2]a_2-x_2\partial_{\mathcal{B}_{10}}(a_2)\\
=&x_1[x_1a_1+x_2a_2-\partial_{\mathcal{B}_{10}}(a_1)]+x_2[-\frac{1}{4}x_2a_1+x_2a_2-\partial_{\mathcal{B}_{10}}(a_2)].
\end{align*}
Then $\partial_{\mathcal{B}_{10}}(a_1)=x_1a_1+x_2a_2$ and $\partial_{\mathcal{B}_{10}}(a_2)=-\frac{1}{4}x_2a_1+x_2a_2$. So $H^i(\mathcal{B}_{10})=0$.

(11)We have $\partial_{\mathcal{B}_{11}}(x_1)=x_1^2+x_2^2$ and $\partial_{\mathcal{B}_{11}}(x_2)=x_1x_2+x_2x_1$. It is easy to check that $H^0(\mathcal{B}_{11})=\k$ and $H^1(\mathcal{B}_{11})=0$.  For any $x_1a_1+x_2a_2\in Z^{i}(\mathcal{B}_{11}), i\ge 2$, we have
\begin{align*}
0=&\partial_{\mathcal{B}_{11}}[x_1a_1+x_2a_2]\\
=&(x_1^2+x_2^2)a_1-x_1\partial_{\mathcal{B}_{11}}(a_1)+[x_1x_2+x_2x_1]a_2-x_2\partial_{\mathcal{B}_{11}}(a_2)\\
=&x_1[x_1a_1+x_2a_2-\partial_{\mathcal{B}_{11}}(a_1)]+x_2[x_2a_1+x_1a_2-\partial_{\mathcal{B}_{11}}(a_2)].
\end{align*}
Thus $\partial_{\mathcal{B}_{11}}(a_1)=x_1a_1+x_2a_2$ and $\partial_{\mathcal{B}_{11}}(a_2)=x_2a_1+x_1a_2$. Then $H^i(\mathcal{B}_{11})=0$ and hence $H(\mathcal{B}_{11})=\k$.

(12)The differential of $\mathcal{B}_{st}$ is defined by
\begin{align*}
\begin{cases}
\partial_{\mathcal{B}_{st}}(x_1)=(1+s-st)x_1^2+x_1x_2+x_2x_1+\frac{1}{s}x_2^2\\
\partial_{\mathcal{B}_{st}}(x_2)=sx_1^2+x_1x_2+x_2x_1+tx_2^2,
\end{cases}
\end{align*}
where $s\in \k^{\times}, t\in \k$ and $st\neq 1$. Obviously, $H^0(\mathcal{B}_{st})=\k$. Let $l_1x_1+l_2x_2$ be a cocycle element in $\mathcal{B}_{st}^1$. Then \begin{align*}
0=&\partial_{\mathcal{B}_{st}}(l_1x_1+l_2x_2)\\
=&l_1[(1+s-st)x_1^2+x_1x_2+x_2x_1+\frac{1}{s}x_2^2]+l_2[sx_1^2+x_1x_2+x_2x_1+tx_2^2]\\
=&[l_1(1+s-st)+l_2s]x_1^2+(l_1+l_2)(x_1x_2+x_2x_1)+(\frac{l_1}{s}+l_2t)x_2^2.
\end{align*}
We have \begin{align*}
\begin{cases}
l_1(1+s-st)+l_2s=0\\
l_1+l_2=0\\
\frac{l_1}{s}+l_2t=0.
\end{cases}
\end{align*}
Then $l_1=l_2=0$ and hence $H^1(\mathcal{B}_{st})=0$. For any $i\ge 2$, any element in $Z^i(\mathcal{B}_{st})$ can be written by
$x_1a_1+x_2a_2$, for some $a_1,a_2\in \mathcal{B}_{st}^{i-1}$. And we have
\begin{align*}
&0=\partial_{\mathcal{B}_{st}}(x_1a_1+x_2a_2)\\
&=[(1+s-st)x_1^2+x_1x_2+x_2x_1+\frac{1}{s}x_2^2]a_1-x_1\partial_{\mathcal{B}_{st}}(a_1)\\
&\quad\quad +[sx_1^2+x_1x_2+x_2x_1+tx_2^2]a_2-x_2\partial_{\mathcal{B}_{st}}(a_2)\\
&=x_1[(1+s-st)x_1a_1+x_2a_1-\partial_{\mathcal{B}_{st}}(a_1)+sx_1a_2+x_2a_2]\\
&\quad\quad +x_2[x_1a_1+\frac{1}{s}x_2a_1+x_1a_2+tx_2a_2-\partial_{\mathcal{B}_{st}}(a_2)].
\end{align*}
Hence \begin{align*}
\begin{cases}
(1+s-st)x_1a_1+x_2a_1-\partial_{\mathcal{B}_{st}}(a_1)+sx_1a_2+x_2a_2=0 \quad (1)\\
x_1a_1+\frac{1}{s}x_2a_1+x_1a_2+tx_2a_2-\partial_{\mathcal{B}_{st}}(a_2)=0  \quad\quad\quad\quad\quad\,\,\, (2)
\end{cases}.
\end{align*}
$(1)-(2)\times s$ implies that $(1-st)x_1a_1+(1-st)x_2a_2-\partial_{\mathcal{B}_{st}}(a_1-sa_2)=0.$ Then
$$\partial_{\mathcal{B}_{st}}[\frac{a_1}{1-st}-\frac{sa_2}{1-st}]=x_1a_1+x_2a_2.$$ Thus $H^{i}(\mathcal{B}_{st})=0$ and $H(\mathcal{B}_{st})=\k$.
\end{proof}


\section{Homological properties of DG free algebras}
For any $k$-vector space $V$, we write $V^*=\Hom_{k}(V,k)$. Let $\{e_i|i\in I\}$ be a basis of a finite dimensional $k$-vector space $V$.  We denote the dual basis of $V$ by $\{e_i^*|i\in I\}$, i.e., $\{e_i^*|i\in I\}$ is a basis of $V^*$ such that $e_i^*(e_j)=\delta_{i,j}$. For any graded vector space $W$ and $j\in\Bbb{Z}$,  the $j$-th suspension $\Sigma^j W$ of $W$ is a graded vector space defined by $(\Sigma^j W)^i=W^{i+j}$.

In this section, we will study homological properties of DG free algebras.
Now, let us review some fundamental homological properties for DG algebras.
\begin{defn}\label{basicdef}
{\rm Let $\mathcal{A}$ be a connected cochain DG algebra.
\begin{enumerate}
\item  If $\dim_{k}H(R\Hom_{\mathcal{A}}(k,\mathcal{A}))=1$, then $A$ is called Gorenstein (cf. \cite{FHT1,FM,Gam});
\item  If ${}_{\mathcal{A}}k$, or equivalently ${}_{\mathcal{A}^e}\mathcal{A}$, has a minimal semi-free resolution with a semi-basis concentrated in degree $0$, then $\mathcal{A}$ is called Koszul (cf. \cite{HW});
\item If ${}_{\mathcal{A}}k$, or equivalently the DG $\mathcal{A}^e$-module $\mathcal{A}$ is compact, then $\mathcal{A}$ is called homologically smooth (cf. \cite[Corollary 2.7]{MW3});
\item If $\mathcal{A}$ is homologically smooth and $$R\Hom_{\mathcal{A}^e}(\mathcal{A}, \mathcal{A}^e)\cong
\Sigma^{-n}\mathcal{A}$$ in  the derived category $\mathrm{D}((\mathcal{A}^e)^{op})$ of right DG $\mathcal{A}^e$-modules, then $\mathcal{A}$ is called an $n$-Calabi-Yau DG algebra  (cf. \cite{Gin,VdB}).
\end{enumerate}}
 \end{defn}
A natural question is whether DG free algebras have the properties listed in Definition \ref{basicdef}.
It is reasonable for us to consider the cases from easy to difficult. We have the following proposition for trivial DG free algebras.
\begin{prop}\label{zerodiff}
Let $\mathcal{A}$ be a connected cochain DG algebra such that $$H(\mathcal{A})=\k\langle \lceil y_1\rceil, \cdots, \lceil y_n\rceil \rangle,$$ for some degree $1$ cocycle elements $y_1,\cdots, y_n$  in $\mathcal{A}$. Then $\mathcal{A}$ is not a Gorenstein DG algebra but a Koszul and homologically smooth DG algebra.
\end{prop}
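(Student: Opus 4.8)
The plan is to reduce the whole statement to the \emph{trivial} DG free algebra and then read off all three properties from one explicit resolution. First I would turn the hypothesis into a quasi-isomorphism from an honest free algebra. Since $y_1,\dots,y_n$ are degree $1$ cocycles in $\mathcal{A}$, sending the free generators to the $y_i$ defines a morphism of connected cochain DG algebras $\phi\colon (\k\langle t_1,\dots,t_n\rangle,0)\To \mathcal{A}$, the source carrying the zero differential; it is a chain map precisely because $\partial_{\mathcal{A}}y_i=0$. On cohomology $H(\phi)$ sends $t_i\mapsto \lceil y_i\rceil$, so it is the isomorphism $\k\langle t_1,\dots,t_n\rangle\xrightarrow{\cong}\k\langle\lceil y_1\rceil,\dots,\lceil y_n\rceil\rangle=H(\mathcal{A})$. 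Hence $\phi$ is a quasi-isomorphism, and $\mathcal{A}$ is quasi-isomorphic to the trivial DG free algebra $R=(\k\langle t_1,\dots,t_n\rangle,0)$. Because Gorensteinness, Koszulity and homological smoothness are all defined through the derived category of DG modules, they are preserved under quasi-isomorphisms of DG algebras; it therefore suffices to verify the three claims for $R$, where the differential vanishes and $R$ is just the free graded algebra with each $|t_i|=1$.

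Next I would write down the minimal semifree resolution of ${}_R\k$. Let $V=\bigoplus_{i=1}^n\k t_i$. Take $F$ with semibasis $\{e_0\}\cup\{e_i\mid 1\le i\le n\}$, all in cohomological degree $0$, and differential $\partial e_0=0$, $\partial e_i=t_ie_0$; as a graded module $F^{\#}=R\otimes(\k e_0\oplus V)$ and the associated complex is the standard length-one resolution
\begin{equation*}
0\To R\otimes V \xrightarrow{\ \partial\ } R \To \k \To 0,\qquad \partial(r\otimes t_i)=rt_i .
\end{equation*}
This is exact since the free algebra has global dimension $1$, it is semifree by construction, and it is minimal because $\partial F\subseteq R^{\ge 1}F$. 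The semibasis is concentrated in degree $0$, which is exactly the Koszul condition of Definition~\ref{basicdef}(2), so $R$, and hence $\mathcal{A}$, is Koszul. Moreover $F$ has a \emph{finite} semibasis and is bounded, so ${}_R\k$ is compact; by Definition~\ref{basicdef}(3) this gives that $R$, and hence $\mathcal{A}$, is homologically smooth.

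Finally I would compute $R\Hom_R(\k,R)=\Hom_R(F,R)$ using this same $F$. Applying $\Hom_R(-,R)$ turns the resolution into the complex $R\xrightarrow{\ d\ }V^{*}\otimes R$, where under the identification $V^{*}\otimes R\cong R^{n}$ the map is $d(r)=(t_1r,\dots,t_nr)$. As the free algebra is a domain, $d$ is injective, so $\mathrm{Ext}^0_R(\k,R)=0$, while $\mathrm{Ext}^1_R(\k,R)=\operatorname{coker} d=R^{n}/\{(t_1r,\dots,t_nr)\mid r\in R\}$. A dimension count in each internal degree ($\dim_{\k}\operatorname{coker}^0=n$ and $\dim_{\k}\operatorname{coker}^d=n^{d+1}-n^{d-1}$ for $d\ge 1$, which is positive and unbounded once $n\ge 2$) shows $\operatorname{coker} d$ is infinite-dimensional, whence $\dim_{\k}H(R\Hom_R(\k,R))=\infty\neq 1$ and $R$ is not Gorenstein. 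Transporting back along $\phi$ yields that $\mathcal{A}$ is not Gorenstein.

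The genuinely delicate points are bookkeeping ones rather than hard analysis: checking that the semibasis of the minimal resolution really sits in cohomological degree $0$, so that the Koszul definition applies verbatim; and making the quasi-isomorphism invariance of the three homological notions explicit (this is where I expect to spend the most care). I would also flag that the non-Gorenstein conclusion genuinely uses $n\ge 2$: for $n=1$ the algebra is $\k[t_1]$, whose corresponding cokernel is one-dimensional, so that case is in fact Gorenstein and must be excluded by the free-algebra convention.
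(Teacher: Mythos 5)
Your proof is correct, but it takes a genuinely different route from the paper's. The paper never leaves $\mathcal{A}$: it takes the length-one minimal graded free resolution of $\k$ over the free algebra $H(\mathcal{A})$ and feeds it into the Eilenberg--Moore construction to produce a minimal semi-free resolution $F$ of ${}_{\mathcal{A}}\k$ with semi-basis $\{1,\Sigma e_{y_1},\dots,\Sigma e_{y_n}\}$ concentrated in degree $0$ and $\partial_F(\Sigma e_{y_i})=y_i$; Koszulity and smoothness are read off from this semi-basis, and the failure of Gorensteinness from $H^0(\Hom_{\mathcal{A}}(F,\mathcal{A}))=\bigoplus_{i=1}^n\k(\Sigma e_{y_i})^*$. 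You instead construct the quasi-isomorphism $\phi\colon(\k\langle t_1,\dots,t_n\rangle,0)\to\mathcal{A}$, transport the three properties across it, and then run essentially the same length-one resolution and the same $\Hom$ computation over the formal model. The two resolutions are identical in shape, so the difference is where the work is placed: your route buys a computation with zero differential, at the price of having to justify that Gorensteinness, Koszulity and homological smoothness are quasi-isomorphism invariants --- true, and you rightly flag it as the delicate step, but it should be made explicit (Gorenstein and smooth via the derived equivalence induced by $\phi$, which sends $\k$ to $\k$ and preserves compactness; Koszul via the characterization by the Ext-algebra $H(R\Hom(\k,\k))$ being concentrated in degree $0$). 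The paper's route avoids this by working over $\mathcal{A}$ directly, at the price of invoking the Eilenberg--Moore machinery. Your closing remark about $n=1$ is a genuine catch rather than a defect of your argument: for one generator the cokernel is one-dimensional in total, $H(R\Hom_R(\k,R))\cong\k$, and the trivial DG algebra $\k[t]$ \emph{is} Gorenstein, so the ``not Gorenstein'' half of the statement (and the paper's own final inference from $\dim H^0=n$) silently requires $n\ge 2$.
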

\begin{proof}
The graded module ${}_{H(A)}\k$ has the following minimal graded free resolution:
 \begin{align*}
  0\to H(A)\otimes (\bigoplus\limits_{i=1}^n\k e_{y_i}) \stackrel{d_1}{\to} H(A)\stackrel{\varepsilon}{\to} \k\to 0,
\end{align*}
where $\mu$ and $\partial_1$ are defined by $\mu(\lceil a\rceil \otimes \lceil b\rceil)= \lceil ab\rceil $, $\forall \lceil a\rceil \in H(\mathcal{A}), \lceil b\rceil \in H(\mathcal{A})^{op}$; $d_1(e_{y_i})=y_i,i=1,2,\cdots, n$.
Applying the constructing procedure of Eilenberg-Moore resolution, we can construct a minimal semi-free resolution $F$ of the DG $\mathcal{A}$-module $\k$. We have
$$F^{\#}=\mathcal{A}^{\#}\oplus [\mathcal{A}^{\#}\otimes (\bigoplus_{i=1}^n \k \Sigma e_{y_i})]$$ and
$\partial_{F}$ is defined by
$\partial_{F}(\Sigma e_{y_i})=y_{i}, i=1,2,\cdots, n.$ Hence $\mathcal{A}$ is a Koszul and homologically smooth DG algebra.
The DG $\mathcal{A}^{op}$-module $\Hom_{\mathcal{A}}(F,\mathcal{A})$ is a minimal semi-free DG module whose underlying graded module is
\begin{align*}
\{\k1^*\oplus [\bigoplus_{i=1}^n \k(\Sigma e_{y_i})^*]\}\otimes \mathcal{A}^{\#}.
\end{align*}
Hence  $\Hom_{\mathcal{A}}(F,\mathcal{A})$ is concentrated in degrees $\ge 0$. On the other hand, we have $\partial_{\Hom}[(\Sigma e_{y_i})^*]=0, \forall i\in \{1,2,\cdots, n\}$ and $\partial_{\Hom}(1^*)=\sum\limits_{i=1}^n-(\Sigma e_{y_i})^* y_i$, since the differential $\partial_{\Hom}$ of $\Hom_{\mathcal{A}}(F,\mathcal{A})$ is defined by $$\partial_{\Hom}(f)=\partial_{\mathcal{A}} \circ f -(-1)^{j}f\circ \partial_F, \forall  f\in [\Hom_{\mathcal{A}}(F,\mathcal{A})]^j.$$
 So $H^0(\Hom_{\mathcal{A}}(F,\mathcal{A}))=\bigoplus\limits_{i=1}^n\k(\Sigma e_{y_i})^*$ and hence $\mathcal{A}$ is not Gorenstein.
\end{proof}
\begin{rem}
By Theorem \ref{zerodiff}, any trivial DG free algebra is not Gorenstein but a Koszul and homologically smooth DG algebra. The following proposition indicates that is not Calabi-Yau.
\end{rem}

\begin{prop}\label{cygor}
Any Calabi-Yau connected cochain DG algebra $\mathcal{A}$ is a
Gorenstein DG algebra.
\end{prop}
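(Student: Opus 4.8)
The plan is to compute $R\Hom_{\mathcal{A}}(k,\mathcal{A})$ explicitly from the Calabi--Yau data and verify that its total cohomology is one dimensional, which is precisely the Gorenstein condition of Definition \ref{basicdef}(1). By Definition \ref{basicdef}(4), the hypothesis that $\mathcal{A}$ is $n$-Calabi--Yau for some integer $n$ supplies two ingredients: first, $\mathcal{A}$ is homologically smooth, so the diagonal bimodule $\mathcal{A}$ is compact in $\mathrm{D}(\mathcal{A}^e)$; second, there is an isomorphism $R\Hom_{\mathcal{A}^e}(\mathcal{A},\mathcal{A}^e)\cong \Sigma^{-n}\mathcal{A}$ in $\mathrm{D}((\mathcal{A}^e)^{op})$. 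The whole argument rests on a Van den Bergh type duality isomorphism
$$R\Hom_{\mathcal{A}}(k,\mathcal{A})\cong R\Hom_{\mathcal{A}^e}(\mathcal{A},\mathcal{A}^e)\otimes^{L}_{\mathcal{A}}k,$$
into which the second ingredient can be fed directly.

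First I would establish this duality isomorphism, which is where homological smoothness is used essentially. Since $\mathcal{A}$ is compact, hence perfect, as a DG $\mathcal{A}^e$-module, the evaluation morphism
$$R\Hom_{\mathcal{A}^e}(\mathcal{A},\mathcal{A}^e)\otimes^{L}_{\mathcal{A}^e}X\To R\Hom_{\mathcal{A}^e}(\mathcal{A},X)$$
is an isomorphism for every $X\in\mathrm{D}(\mathcal{A}^e)$: both sides are exact functors of $X$ that preserve coproducts (the right-hand side because $\mathcal{A}$ is compact) and agree when $X=\mathcal{A}^e$, hence agree on the localizing subcategory generated by $\mathcal{A}^e$, namely all of $\mathrm{D}(\mathcal{A}^e)$. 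I would apply this with $X=\mathcal{A}^e\otimes^{L}_{\mathcal{A}}k$, whose underlying bimodule is $k\otimes_k\mathcal{A}$ (trivial left action, regular right action). The left-hand side of the evaluation isomorphism then reads $R\Hom_{\mathcal{A}^e}(\mathcal{A},\mathcal{A}^e)\otimes^{L}_{\mathcal{A}}k$, while the right-hand side $R\Hom_{\mathcal{A}^e}(\mathcal{A},k\otimes_k\mathcal{A})$ is identified with $R\Hom_{\mathcal{A}}(k,\mathcal{A})$ through the (induction, restriction) adjunction along the algebra map $\mathcal{A}\to\mathcal{A}^e$, $a\mapsto a\otimes 1$, which at the level of Hochschild cochains is a direct comparison of bar complexes.

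With the duality in hand the conclusion is immediate. Substituting $R\Hom_{\mathcal{A}^e}(\mathcal{A},\mathcal{A}^e)\cong\Sigma^{-n}\mathcal{A}$ gives
$$R\Hom_{\mathcal{A}}(k,\mathcal{A})\cong\Sigma^{-n}\mathcal{A}\otimes^{L}_{\mathcal{A}}k\cong\Sigma^{-n}k,$$
because $\mathcal{A}\otimes^{L}_{\mathcal{A}}k\cong k$. Hence $\dim_{k}H(R\Hom_{\mathcal{A}}(k,\mathcal{A}))=\dim_{k}H(\Sigma^{-n}k)=1$, so $\mathcal{A}$ is Gorenstein.

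I expect the main obstacle to be the careful bookkeeping of the two commuting $\mathcal{A}$-actions throughout the duality isomorphism: one must track which $\mathcal{A}$-structure on $R\Hom_{\mathcal{A}^e}(\mathcal{A},\mathcal{A}^e)$ is used when forming $-\otimes^{L}_{\mathcal{A}}k$, and check that both the evaluation morphism and the adjunction respect these structures, so that the final object legitimately carries the DG-module structure making $\Sigma^{-n}k$ the correct answer. By contrast, the compactness (d\'evissage) step and the final dimension count are routine once the module structures are pinned down.
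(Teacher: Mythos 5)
Your argument is correct, and it rests on the same two pillars as the paper's own proof --- the adjunction that converts $R\Hom_{\mathcal{A}}(\k,\mathcal{A})$ into a Hochschild-type $R\Hom_{\mathcal{A}^e}(\mathcal{A},-)$, and the perfectness of $\mathcal{A}$ over $\mathcal{A}^e$, which lets one pull $R\Hom_{\mathcal{A}^e}(\mathcal{A},\mathcal{A}^e)$ out as a tensor factor --- but the execution is genuinely different. You first isolate the clean duality
$$R\Hom_{\mathcal{A}}(\k,\mathcal{A})\cong R\Hom_{\mathcal{A}^e}(\mathcal{A},\mathcal{A}^e)\otimes^{L}_{\mathcal{A}}\k,$$
proving the evaluation isomorphism by d\'evissage over the localizing subcategory generated by $\mathcal{A}^e$, and then substitute the Calabi--Yau isomorphism; the paper instead computes the $\k$-linear dual $[\Hom_{\mathcal{A}}(F,I)]^*$ through an explicit chain of quasi-isomorphisms involving three chosen resolutions ($F\to\k$ semi-free, $G\to\mathcal{A}$ semi-free over $\mathcal{A}^e$ with finite semi-basis, $\mathcal{A}\to I$ semi-injective) and a tensor-rearrangement step quoted from Shklyarov, landing on $\Sigma^{-i}\k$ and dualizing back. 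Your route is the standard Van den Bergh--type argument: it avoids the semi-injective resolution and the double dual entirely, at the cost of invoking the abstract compact-object d\'evissage where the paper uses the concrete finite semi-basis of $G$. The one point that genuinely needs the care you flag is the identification of $\mathcal{A}^e\otimes^{L}_{\mathcal{A}}\k$ with the bimodule $\Hom_{\k}(\k,\mathcal{A})$ required by the adjunction: you must restrict along the appropriate embedding of $\mathcal{A}$ into $\mathcal{A}^e$ so that the residual $\mathcal{A}$-action used in $-\otimes^{L}_{\mathcal{A}}\k$ matches the corresponding outer structure on $R\Hom_{\mathcal{A}^e}(\mathcal{A},\mathcal{A}^e)$. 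Since the Calabi--Yau condition gives an isomorphism $R\Hom_{\mathcal{A}^e}(\mathcal{A},\mathcal{A}^e)\cong\Sigma^{-n}\mathcal{A}$ of bimodules, either choice yields $\Sigma^{-n}\mathcal{A}\otimes^{L}_{\mathcal{A}}\k\cong\Sigma^{-n}\k$, so the conclusion $\dim_{\k}H(R\Hom_{\mathcal{A}}(\k,\mathcal{A}))=1$ stands.
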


\begin{proof}
Since $\mathcal{A}$ is a Calabi-Yau  connected cochain DG algebra, $\mathcal{A}$ is homologically smooth and
$$\Omega = R\Hom_{\mathcal{A}^e}(\mathcal{A},\mathcal{A}^e) \cong \Sigma^i\mathcal{A}$$ in $\mathrm{D}(\mathcal{A}^e)$, for some $i\in\Bbb{Z}$. Let $F$ be the minimal semi-free resolution of ${}_{\mathcal{A}}\k$. Then $F$ admits a finite semi-basis. Let $I$ be a semi-injective resolution of the DG $\mathcal{A}$-module $\mathcal{A}$. One sees that $\Hom_{\k}(F,I)$ is a homotopically injective DG $\mathcal{A}^e$-module. By \cite[Lemma 2.4]{MW3}, we have
$$\Hom_{\mathcal{A}}(F,I)\cong \Hom_{\mathcal{A}^e}(\mathcal{A},\Hom_{\k}(F,I)).$$ Since $\mathcal{A}\in \mathrm{D}^c(\mathcal{A}^e)$, it admits a minimal semi-free resolution $G$, which has a finite semi-basis. We have
\begin{align*}
[\Hom_{\mathcal{A}}(F,I)]^*&\cong[\Hom_{\mathcal{A}^e}(\mathcal{A},\Hom_{\k}(F,I))]^*\\
                           &\simeq  [\Hom_{\mathcal{A}^e}(G,\Hom_{\k}(F,I))]^* \\
                           &\cong  [\Hom_{\mathcal{A}^e}(G,\mathcal{A}^e)\otimes_{\mathcal{A}^e}\Hom_{\k}(F,I)]^*\\
                           &\simeq [\Hom_{\mathcal{A}^e}(G,\mathcal{A}^e)\otimes_{\mathcal{A}^e}(F^*\otimes I)]^* \\
                           &\stackrel{(a)}{\simeq} [F^*\otimes_{\mathcal{A}}\Hom_{\mathcal{A}^e}(G,\mathcal{A}^e)\otimes_{\mathcal{A}}I]^*\\
                           &\cong  \Hom_{\mathcal{A}}(\Hom_{\mathcal{A}^e}(G,\mathcal{A}^e)\otimes_{\mathcal{A}}I, (F^*)^*)\\
                           &\simeq  \Hom_{\mathcal{A}}(\Hom_{\mathcal{A}^e}(G,\mathcal{A}^e)\otimes_{\mathcal{A}}\mathcal{A},(F^*)^*)\\
                           &\cong  \Hom_{\mathcal{A}}(\Hom_{\mathcal{A}^e}(G,\mathcal{A}^e),(F^*)^*)\\
                           &\simeq  \Hom_{\mathcal{A}}(\Sigma^i\mathcal{A}, (F^*)^*)\\
                           &\cong \Sigma^{-i}(F^*)^* \\
                           &\simeq \Sigma^{-i}\k,
\end{align*}
where $(a)$ is obtained by \cite[A1]{Shk}.
So $$H(R\Hom_{\mathcal{A}}(\k,\mathcal{A}))=H(\Hom_{\mathcal{A}}(F,\mathcal{A}))\cong H(\Hom_{\mathcal{A}}(F,I))\cong \Sigma^i\k$$ and $\mathcal{A}$ is Gorenstein.
\end{proof}

It remains to consider the homological properties of non-trivial DG free algebras. We have classified all the isomorphism classes of DG free algebras with two degree one generators. Hence, we only need to check case by case when $n=2$. In \cite{MH}, it is proved that a connected cochain DG algebra $\mathcal{A}$ is a Kozul Calabi-Yau DG algebra if $H(\mathcal{A})$  belongs to either of the following cases:
\begin{align*}
 (a) H(\mathcal{A})\cong \k;  \quad  (b) H(\mathcal{A})= \k[\lceil z\rceil], z\in \mathrm{ker}(\partial_{\mathcal{A}}^1).
\end{align*}
So Proposition \ref{cohomology} indicates that $\mathcal{B}_1,\mathcal{B}_2,\cdots \mathcal{B}_{11}$ and $\mathcal{B}(s,t)$ with $s\in \k^{\times}$ and $st\neq 1$ are all Koszul Calabi-Yau DG algebras except $\mathcal{B}_6$. For $\mathcal{B}_6$, we have the following proposition.
\begin{prop}\label{bsix}
The connected cochain DG algebra $\mathcal{B}_6$ is a Koszul Calabi-Yau DG algebra.
\end{prop}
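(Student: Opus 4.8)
The plan is to prove the two assertions separately and to ground everything in the two generators of $\mathcal{B}_6$, whose differential is $\partial_{\mathcal{B}_6}(x_1)=x_2^2$ and $\partial_{\mathcal{B}_6}(x_2)=0$. Since $H(\mathcal{B}_6)=\k[\lceil x_2\rceil,\lceil x_1x_2+x_2x_1\rceil]/(\lceil x_2\rceil^2)$ by Proposition \ref{cohomology}, which is neither $\k$ nor $\k[\lceil z\rceil]$, the criterion of \cite{MH} quoted above does not apply to $\mathcal{B}_6$, so both Koszulity and the Calabi--Yau property must be established by hand.

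For Koszulity and homological smoothness I would produce a single finite minimal semi-free resolution of ${}_{\mathcal{B}_6}\k$ with semi-basis concentrated in degree $0$. Following the pattern of Proposition \ref{zerodiff}, set $F^{\#}=\mathcal{B}_6^{\#}\oplus\mathcal{B}_6^{\#}e_1\oplus\mathcal{B}_6^{\#}e_2$ with $|e_1|=|e_2|=0$, and define $\partial_F(e_2)=x_2$ and $\partial_F(e_1)=x_1+x_2e_2$, extended by the Leibniz rule. The term $x_2e_2$ is the correction forced by $\partial_{\mathcal{B}_6}(x_1)=x_2^2\neq0$: one checks $\partial_F^2(e_2)=0$ and $\partial_F^2(e_1)=\partial_{\mathcal{B}_6}(x_1)+\partial_F(x_2e_2)=x_2^2-x_2^2=0$, and since $\partial_F(F)$ lies in the augmentation ideal times $F$, the resolution is minimal. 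To see that $F\to\k$ is a quasi-isomorphism I would filter $F$ by $\mathcal{B}_6\cdot1$ and analyse the long exact sequence of the short exact sequence $0\to\mathcal{B}_6\cdot1\to F\to\mathcal{B}_6e_1\oplus\mathcal{B}_6e_2\to0$; the connecting homomorphism is, in each degree, built from left multiplication by $x_1$ and $x_2$ into $H(\mathcal{B}_6)$, and the content is that it maps the homology of the quotient isomorphically onto $H^{\geq1}(\mathcal{B}_6)$ — the even classes being reached through multiplication by $\lceil x_2\rceil$ and the odd ones through the class of $x_1x_2+x_2x_1$. Granting $H(F)=\k$, the semi-basis $\{1,e_1,e_2\}$ is finite and sits in degree $0$, which simultaneously yields Koszulity by Definition \ref{basicdef}(2) and homological smoothness by Definition \ref{basicdef}(3).

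For the Calabi--Yau property I would build the parallel minimal semi-free resolution $Q$ of the DG $\mathcal{B}_6^e$-module $\mathcal{B}_6$, with semi-basis $\epsilon_0,\epsilon_1,\epsilon_2$ in degree $0$, augmentation $\mu(\epsilon_0)=1$, and differential $\partial_Q(\epsilon_2)=x_2\otimes1-1\otimes x_2$ together with $\partial_Q(\epsilon_1)=(x_1\otimes1-1\otimes x_1)+(x_2\otimes1)\epsilon_2-\epsilon_2(1\otimes x_2)$; here the bracketed correction is exactly what cancels $\partial_{\mathcal{B}_6^e}(x_1\otimes1-1\otimes x_1)=x_2^2\otimes1-1\otimes x_2^2$, so that $\partial_Q^2=0$. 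This is the DG enrichment of the length-one Hochschild resolution of a free algebra, and $\mu\colon Q\to\mathcal{B}_6$ is a quasi-isomorphism by the same filtration argument. Applying $\Hom_{\mathcal{B}_6^e}(-,\mathcal{B}_6^e)$ then produces the two-step complex $\mathcal{B}_6^e\epsilon_0^*\to\mathcal{B}_6^e\epsilon_1^*\oplus\mathcal{B}_6^e\epsilon_2^*$ dual to $\partial_Q$, and the goal becomes to compute its cohomology and recognise it, as a DG $\mathcal{B}_6^e$-module, as $\Sigma^{-n}\mathcal{B}_6$.

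The main obstacle is this final cohomology computation. Already its one-sided shadow $H(\Hom_{\mathcal{B}_6}(F,\mathcal{B}_6))=H(R\Hom_{\mathcal{B}_6}(\k,\mathcal{B}_6))$ is subtle: the spectral sequence of the filtration does not degenerate at $E_1$ because $x_1$ is not a cocycle, and one must carry out a nontrivial $d_2$ that uses the relation $x_2^2=\partial_{\mathcal{B}_6}(x_1)$ to connect the $\lceil x_2\rceil$-tower over $\epsilon_0^*$ with the $(x_1x_2+x_2x_1)$-tower over $\epsilon_1^*$; showing this $d_2$ is nonzero in every degree is what forces the cohomology to collapse to a one-dimensional space in degree $0$ spanned by $\epsilon_1^*$. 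This already shows $\mathcal{B}_6$ is Gorenstein with zero shift, so by Proposition \ref{cygor} the only candidate dimension is $n=0$. On the bimodule side the same collapse should hold, and the genuinely extra work is to track the two-sided $\mathcal{B}_6^e$-action throughout the computation, so as to identify the surviving cohomology as the untwisted bimodule $\mathcal{B}_6$ (rather than a Nakayama twist) and conclude $R\Hom_{\mathcal{B}_6^e}(\mathcal{B}_6,\mathcal{B}_6^e)\cong\Sigma^{0}\mathcal{B}_6$, i.e. that $\mathcal{B}_6$ is $0$-Calabi--Yau.
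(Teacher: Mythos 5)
Your construction of the resolution $F$ of ${}_{\mathcal{B}_6}\k$ is exactly the one the paper uses (your $e_1,e_2$ are its $\Sigma e_z,\Sigma e_{x_2}$, with the same differential), and granting $H(F)=\k$ this does give Koszulity and homological smoothness. Your verification that $F\to\k$ is a quasi-isomorphism is only sketched --- note that in the quotient $F/\mathcal{B}_6\cdot 1$ the induced differential still sends $e_1$ to $x_2e_2$, so the quotient is not a sum of shifted copies of $\mathcal{B}_6$ and you need a second filtration step before the long exact sequence argument applies --- but this is recoverable; the paper instead does the direct element-by-element computation using the known $H(\mathcal{B}_6)$.

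The genuine gap is in the Calabi--Yau part. Your plan is to resolve $\mathcal{B}_6$ over $\mathcal{B}_6^e$ and compute $R\Hom_{\mathcal{B}_6^e}(\mathcal{B}_6,\mathcal{B}_6^e)$ directly, and you yourself flag the decisive step --- tracking the two-sided action through the spectral sequence so as to identify the surviving cohomology as the \emph{untwisted} bimodule $\mathcal{B}_6$ rather than a Nakayama twist --- as ``genuinely extra work'' that you do not carry out. That identification is precisely the content of the Calabi--Yau property, so as written the proposal establishes at most that $\mathcal{B}_6$ is Gorenstein with shift $0$ (and even that rests on an unverified claim that a certain $d_2$ is nonzero in every degree). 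The paper circumvents this entirely: it computes the Ext-algebra $E=H(\Hom_{\mathcal{B}_6}(F,F))$, shows it is concentrated in degree $0$ and isomorphic to the local commutative algebra $\k[X]/(X^3)$, observes that this is a symmetric Frobenius algebra so that $\mathrm{Tor}^0_{\mathcal{B}_6}(\k,\k)\cong E^*$ is a symmetric coalgebra, and then invokes Theorem 4.2 of \cite{HM1} to conclude that $\mathcal{B}_6$ is Koszul Calabi--Yau. That reduction of the bimodule duality statement to a finite-dimensional Frobenius-symmetry check is the key idea missing from your argument; without it (or without actually completing the bimodule computation you outline), the Calabi--Yau claim is not proved.
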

\begin{proof}
By definition,  $\mathcal{B}^{\#}_6=\k\langle x_1,x_2\rangle$ and the differential $\partial_{\mathcal{B}_6}$ is defined by $$\partial_{\mathcal{B}_6}(x_1)=x_2^2, \partial_{\mathcal{B}_6}(x_2)=0.$$
According to the constructing procedure of the minimal semi-free resolution in \cite[Proposition 2.4]{MW1}, we get a minimal semi-free resolution $f: F\stackrel{\simeq}{\to} {}_{\mathcal{B}_6}\k$, where $F$ is a semi-free DG $\mathcal{B}_6$-module with $$F^{\#}=\mathcal{B}_6^{\#}\oplus \mathcal{B}_6^{\#}\Sigma e_{x_2}\oplus \mathcal{B}_6^{\#}\Sigma e_z, \quad \partial_{F}(\Sigma e_{x_2})=x_2, \quad \partial_{F}(\Sigma e_{z})=x_1+x_2\Sigma e_{x_2};$$ and $f$ is defined by $f|_{\mathcal{B}_6}=\varepsilon$, $f(\Sigma e_{x_2})=0$ and $f(\Sigma e_z)=0$.

We should prove that $f$ is a quasi-isomorphism. It suffices to show $H(F)=\k$.
For any graded cocycle element $a_z\Sigma e_z + a_{x_2}\Sigma e_{x_2}+a\in Z^{2k}(F)$, we have
\begin{align*}
0&=\partial_F(a_z\Sigma e_z + a_{x_2}\Sigma e_{x_2}+a)\\
&=\partial_{\mathcal{B}_6}(a_z)\Sigma e_z +a_z(x_1+x_2\Sigma e_{x_2})+\partial_{\mathcal{B}_6}(a_{x_2})\Sigma e_{x_2}+a_{x_2}x_2+\partial_{\mathcal{B}_6}(a)\\
&=\partial_{\mathcal{B}_6}(a_z)\Sigma e_z +[a_zx_2+\partial_{\mathcal{B}_6}(a_{x_2})]\Sigma e_{x_2}+a_zx_1+a_{x_2}x_2+\partial_{\mathcal{B}_6}(a).
\end{align*}
This implies that
$
\begin{cases}
\partial_{\mathcal{B}_6}(a_z)=0 \\
a_zx_2+\partial_{\mathcal{B}_6}(a_{x_2})=0 \\
a_zx_1+a_{x_2}x_2+\partial_{\mathcal{B}_6}(a)=0.
\end{cases}
$ Since $$Z^{2k}(\mathcal{B}_6)=H^{2k}(\mathcal{B}_6)\oplus B^{2k}(\mathcal{B}_6)$$ and $$H(\mathcal{B}_6)=\k[\lceil x_2\rceil, \lceil x_1x_2+x_2x_1\rceil ]/(\lceil x_2\rceil^2),$$
we have $a_z=\partial_{\mathcal{B}_6}(c)+t(x_1x_2+x_2x_1)^k$ for some $c\in \mathcal{B}_6^{2k-1}, t\in \k$. Then
$$[\partial_{\mathcal{B}_6}(c)+t(x_1x_2+x_2x_1)^k]x_2+\partial_{\mathcal{B}_6}(a_{x_2})=0. $$ Hence $t=0$ and $\partial_{\mathcal{B}_6}(c)x_2+\partial_{\mathcal{B}_6}(a_{x_2})=0$. Then $$a_{x_2}=-cx_2+\partial_{\mathcal{B}_6}(\mu)+s(x_1x_2+x_2x_1)^k$$
for some $\mu\in \mathcal{B}_6^{2k-1}$ and $s\in \k$.
We have
$$\partial_{\mathcal{B}_6}(c)x_1+[-cx_2+\partial_{\mathcal{B}_6}(\mu)+s(x_1x_2+x_2x_1)^k]x_2+\partial_{\mathcal{B}_6}(a)=0,$$
which implies that $\partial_{\mathcal{B}_6}(c)=0, s=0$,
and $\partial_{\mathcal{B}_6}(a)=cx_2^2-\partial_{\mathcal{B}_6}(\mu)x_2$. Then $a_z=0$, $a_{x_2}=-cx_2+\partial_{\mathcal{B}_6}(\mu)$ and $$a=-cx_1-\mu x_2+\partial_{\mathcal{B}_6}(\lambda)+\tau(x_1x_2+x_2x_1)^k$$
 for some $\lambda\in \mathcal{B}_6^{2k-1}, \tau\in \k$. Since $c\in Z^{2k-1}(\mathcal{B}_6)\cong H^{2k-1}(\mathcal{B}_6)\oplus B^{2k-1}(\mathcal{B}_6)$,
 we may let
 $c=\partial_{\mathcal{B}_6}(\chi)+\omega(x_1x_2+x_2x_1)^{k-1}x_2$, for some $\chi\in \mathcal{B}_6^{2k-2}$ and $\omega \in \k$.
Therefore,
\begin{align*}
& a_z\Sigma e_z + a_{x_2}\Sigma e_{x_2}+a\\
=&[-cx_2+\partial_{\mathcal{B}_6}(\mu)]\Sigma e_{x_2}-cx_1-\mu x_2+\partial_{\mathcal{B}_6}(\lambda)+\tau(x_1x_2+x_2x_1)^k\\
=&\{-[\partial_{\mathcal{B}_6}(\chi)+\omega(x_1x_2+x_2x_1)^{k-1}x_2]x_2+\partial_{\mathcal{B}_6}(\mu)\}\Sigma e_{x_2} \\
 & -[\partial_{\mathcal{B}_6}(\chi)+\omega(x_1x_2+x_2x_1)^{k-1}x_2]x_1-\mu x_2+\partial_{\mathcal{B}_6}(\lambda)+\tau(x_1x_2+x_2x_1)^k               \\
 =&\partial_F[(\omega-\tau)(x_1x_2+x_2x_1)^{k-1}x_2\Sigma e_z ]\\
 &+ \partial_F\{[-\chi x_2+\mu-\tau(x_1x_2+x_2x_1)^{k-1}x_1]\Sigma e_{x_2}-\chi x_1\}.
\end{align*}
Hence $H^{2k}(F)=0$, for any $k\in \Bbb{N}$. It remains to show
 $H^{2k-1}(F)=0$, for any $k\in \Bbb{N}$.  Let $a_z\Sigma e_z + a_{x_2}\Sigma e_{x_2}+a\in Z^{2k-1}(F)$, we have
\begin{align*}
0&=\partial_F(a_z\Sigma e_z + a_{x_2}\Sigma e_{x_2}+a)\\
&=\partial_{\mathcal{B}_6}(a_z)\Sigma e_z -a_z(x_1+x_2\Sigma e_{x_2})+\partial_{\mathcal{B}_6}(a_{x_2})\Sigma e_{x_2}-a_{x_2}x_2+\partial_{\mathcal{B}_6}(a)\\
&=\partial_{\mathcal{B}_6}(a_z)\Sigma e_z +[-a_zx_2+\partial_{\mathcal{B}_6}(a_{x_2})]\Sigma e_{x_2}-a_zx_1-a_{x_2}x_2+\partial_{\mathcal{B}_6}(a).
\end{align*}
This implies that
$
\begin{cases}
\partial_{\mathcal{B}_6}(a_z)=0 \\
-a_zx_2+\partial_{\mathcal{B}_6}(a_{x_2})=0 \\
-a_zx_1-a_{x_2}x_2+\partial_{\mathcal{B}_6}(a)=0.
\end{cases}
$ Since $$Z^{2k-1}(\mathcal{B}_6)=H^{2k-1}(\mathcal{B}_6)\oplus B^{2k-1}(\mathcal{B}_6)$$ and $$H(\mathcal{B}_6)=\k[\lceil x_2\rceil, \lceil x_1x_2+x_2x_1\rceil ]/(\lceil x_2\rceil^2),$$
we have $a_z=\partial_{\mathcal{B}_6}(c)+t(x_1x_2+x_2x_1)^{k-1}x_2$ for some $c\in \mathcal{B}_6^{2k-2}, t\in \k$. Then
$$-[\partial_{\mathcal{B}_6}(c)+t(x_1x_2+x_2x_1)^{k-1}x_2]x_2+\partial_{\mathcal{B}_6}(a_{x_2})=0. $$ Then
$$a_{x_2}=cx_2+t(x_1x_2+x_2x_1)^{k-1}x_1+\partial_{\mathcal{B}_6}(\mu)+s(x_1x_2+x_2x_1)^{k-1}x_2$$
for some $\mu\in \mathcal{B}_6^{2k-2}$ and $s\in \k$.
We have
\begin{align*}
0=&-[cx_2+t(x_1x_2+x_2x_1)^{k-1}x_1+\partial_{\mathcal{B}_6}(\mu)+s(x_1x_2+x_2x_1)^{k-1}x_2]x_2\\
&-[\partial_{\mathcal{B}_6}(c)+t(x_1x_2+x_2x_1)^{k-1}x_2]x_1+\partial_{\mathcal{B}_6}(a),
\end{align*}
which implies that $t=0$, $a_z=\partial_{\mathcal{B}_6}(c)$, $a_{x_2}=cx_2+\partial_{\mathcal{B}_6}(\mu)+s(x_1x_2+x_2x_1)^{k-1}x_2$ and $a=cx_1+\mu x_2+s(x_1x_2+x_2x_1)^{k-1}x_1+\partial_{\mathcal{B}_6}(\lambda)+\tau(x_1x_2+x_2x_1)^{k-1}x_2$,
 for some $\lambda\in \mathcal{B}_6^{2k-2}, \tau\in \k$. Hence
\begin{align*}
& a_z\Sigma e_z + a_{x_2}\Sigma e_{x_2}+a\\
=&\partial_{\mathcal{B}_6}(c)\Sigma e_{z} +[cx_2+\partial_{\mathcal{B}_6}(\mu)+s(x_1x_2+x_2x_1)^{k-1}x_2] \Sigma e_{x_2} \\
&+cx_1+\mu x_2+s(x_1x_2+x_2x_1)^{k-1}x_1+\partial_{\mathcal{B}_6}(\lambda)+\tau(x_1x_2+x_2x_1)^{k-1}x_2 \\
 =&\partial_F\{[c+s(x_1x_2+x_2x_1)^{k-1}]\Sigma e_z +[\mu+\tau(x_1x_2+x_2x_1)^{k-1}] \Sigma e_{x_2}+\lambda\}.
\end{align*}
Hence $H^{2k-1}(F)=0$, for any $k\in \Bbb{N}$. Therefore, $f$ is a quasi-isomorphism.

Since $F$ has a semi-basis $\{1, \Sigma e_{x_2},\Sigma e_z\}$ concentrated in degree $0$,  $\mathcal{B}_6$ is a Koszul homologically smooth DG algebra.
By the minimality of $F$, we have $$H(\Hom_{\mathcal{B}_6}(F,\k))=\Hom_{\mathcal{B}_6}(F,\k)= \k\cdot 1^*\oplus \k\cdot(\Sigma e_{x_2})^*\oplus \k \cdot(\Sigma e_z)^*.$$  So the Ext-algebra $E=H(\Hom_{\mathcal{B}_6}(F,F))$  is concentrated in degree $0$.
On the other hand, $$\Hom_{\mathcal{B}_6}(F,F)^{\#}\cong (\k \cdot 1^*\oplus \k \cdot (\Sigma e_{x_2})^*\oplus \k \cdot (\Sigma e_z)^*)\otimes_{\k} F^{\#}$$ is concentrated in degree $\ge 0$. This implies that $E= Z^0(\Hom_{\mathcal{B}_6}(F,F))$.
Since $F^{\#}$ is a free graded $\mathcal{B}_6^{\#}$-module with a basis $\{1,\Sigma e_y,\Sigma e_z\}$ concentrated in degree $0$,
  the elements in  $\Hom_{\mathcal{B}_6}(F,F)^0$ are in one to one correspondence with the matrices in $M_3(\k)$. Indeed, any $f\in \Hom_{\mathcal{B}_6}(F_{\k},F_{\k})^0$ is uniquely determined by
  a matrix $A_f=(a_{ij})_{3\times 3}\in M_3(\k)$ with
$$\left(
                         \begin{array}{c}
                          f(1) \\
                          f(\Sigma e_{x_2})\\
                          f(\Sigma e_z)\\
                         \end{array}
                       \right) =      A_f \cdot \left(
                         \begin{array}{c}
                          1 \\
                          \Sigma e_{x_2}\\
                          \Sigma e_z\\
                         \end{array}
                       \right).  $$
                       We see $f\in  Z^0(\Hom_{\mathcal{B}_6}(F,F)$ if and only if $\partial_{F}\circ f=f\circ \partial_{F}$, if and only if
 $$ A_f\cdot \left(
                         \begin{array}{ccc}
                           0 & 0& 0 \\
                           x_2 & 0 & 0 \\
                           x_1 & x_2 & 0 \\
                         \end{array}
                       \right) =  \left(
                         \begin{array}{cccc}
                           0 & 0& 0 \\
                           x_2 & 0 & 0 \\
                           x_1 & x_2 & 0 \\
                         \end{array}
                       \right) \cdot A_f, $$  which is also equivalent to
                       $$\begin{cases}
                       a_{12}=a_{13}=a_{23}=0\\
                       a_{11}=a_{22}=a_{33}\\
                       a_{21}=a_{32}
                       \end{cases}$$
by a direct computation. Let $$  \mathcal{E}=\{ \left(
                         \begin{array}{ccc}
                           a & 0& 0\\
                           b & a & 0 \\
                           c & b & a \\
                         \end{array}
                       \right)\quad | \quad a,b,c,\in \k \}$$ be the subalgebra of the matrix algebra. Then one sees $E\cong \mathcal{E}$.
                       Set \begin{align*} e_1= \left(
                         \begin{array}{ccc}
                           1 & 0& 0\\
                           0 & 1 & 0 \\
                           0 & 0 & 1 \\
                         \end{array}
                       \right),& e_2= \left(
                         \begin{array}{ccc}
                           0 & 0& 0\\
                           1 & 0 & 0 \\
                           0 & 1 & 0 \\
                         \end{array}
                       \right),
                        e_3= \left(
                         \begin{array}{ccc}
                           0 & 0& 0\\
                           0 & 0 & 0 \\
                           1 & 0 & 0 \\
                         \end{array}
                       \right).
                       \end{align*}
                     Then $\{e_1,e_2,e_3\}$ is a $\k$-linear bases of the $\k$-algebra
                        $\mathcal{E}$. The multiplication on $\mathcal{E}$ is defined by the following relations
                       $$\begin{cases} e_1\cdot e_i=e_i\cdot e_1=e_i, i=1,2,3 \\
                        e_2^2=e_3, e_2\cdot e_3=e_3\cdot e_2=0
                       \end{cases} .$$
                       So $\mathcal{E}$ is a local commutative $\k$-algebra isomorphic to $\k[X]/(X^3)$.  Hence
$E\cong \k[X]/(X^3)$ is a symmetric Frobenius algebra concentrated
in degree $0$. This implies that
$\mathrm{Tor}_{\mathcal{B}_6}^0(\k_{\mathcal{B}_6},{}_{\mathcal{B}_6}\k)\cong
E^*$ is a symmetric coalgebra. By \cite[Theorem 4.2]{HM1},
$\mathcal{B}_6$ is a Koszul Calabi-Yau DG algebra.

\end{proof}

By Proposition \ref{nonsym}, Proposition \ref{symone}, Proposition \ref{symtwo}, Proposition \ref{stone}, Proposition \ref{cohomology} and Proposition \ref{bsix}, we reach the following conclusion.
\begin{thm}\label{finresult}
Let $\mathcal{A}$ be a DG free algebra with $2$ degree one generators. Then $\mathcal{A}$ is a Koszul Calabi-Yau DG algebra if and only if $\partial_{\mathcal{A}}\neq 0$.
\end{thm}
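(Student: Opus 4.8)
The plan is to establish the two implications of the biconditional separately, leaning entirely on the classification and cohomology computations already assembled, together with the general facts that being Koszul and being Calabi-Yau are invariants of DG-isomorphism.

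For the forward implication I would argue by contraposition. If $\partial_{\mathcal{A}}=0$, then $\mathcal{A}$ is a trivial DG free algebra, so $H(\mathcal{A})=\mathcal{A}^{\#}=\k\langle x_1,x_2\rangle$ is the free algebra on the two degree-one cocycles $x_1,x_2$. This is exactly the hypothesis of Proposition \ref{zerodiff}, which shows that $\mathcal{A}$ is not Gorenstein. On the other hand, Proposition \ref{cygor} asserts that every Calabi-Yau connected cochain DG algebra is Gorenstein. Combining the two, a trivial DG free algebra cannot be Calabi-Yau, and \emph{a fortiori} cannot be Koszul Calabi-Yau. Hence if $\mathcal{A}$ is Koszul Calabi-Yau then $\partial_{\mathcal{A}}\neq 0$.

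For the reverse implication, suppose $\partial_{\mathcal{A}}\neq 0$. The idea is to reduce to a finite list of explicit models: since Koszulity and the Calabi-Yau property are preserved under DG-isomorphism, it suffices to verify the conclusion on one representative of each isomorphism class. Propositions \ref{nonsym}, \ref{symone}, \ref{symtwo} and \ref{stone} together show that $\mathcal{A}$ is isomorphic to one of $\mathcal{B}_1,\ldots,\mathcal{B}_{11}$ or to $\mathcal{B}(s,t)$ with $s\in\k^{\times}$ and $st\neq 1$. I would then invoke Proposition \ref{cohomology} and the criterion of \cite{MH}, which guarantees that a connected cochain DG algebra is Koszul Calabi-Yau whenever its cohomology is either $\k$ or a polynomial algebra $\k[\lceil z\rceil]$ on a single degree-one cocycle $z$. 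Inspecting the list, every representative except $\mathcal{B}_6$ has cohomology of one of these two shapes: most give $H=\k$, while $\mathcal{B}_5$ and $\mathcal{B}_8$ give $\k[\lceil x_2\rceil]$ and $\k[\lceil x_1-x_2\rceil]$ respectively. All of these are therefore Koszul Calabi-Yau immediately.

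The only genuinely delicate case is $\mathcal{B}_6$, whose cohomology $\k[\lceil x_2\rceil,\lceil x_1x_2+x_2x_1\rceil]/(\lceil x_2\rceil^2)$ falls outside the scope of the \cite{MH} criterion; this is the main obstacle. I expect it to require a direct construction rather than an appeal to a cohomological template: one builds an explicit minimal semi-free resolution $F$ of ${}_{\mathcal{B}_6}\k$, verifies it is a quasi-isomorphism by checking $H(F)=\k$ in each cohomological degree, and then analyzes the Ext-algebra $E=H(\Hom_{\mathcal{B}_6}(F,F))$, showing it is a symmetric Frobenius algebra isomorphic to $\k[X]/(X^3)$, so that the Koszul-duality criterion of \cite{HM1} delivers the Calabi-Yau property. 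This is precisely the content of Proposition \ref{bsix}, which I would cite. Assembling the contrapositive forward direction, the classification, the \cite{MH} criterion for all representatives other than $\mathcal{B}_6$, and Proposition \ref{bsix} for $\mathcal{B}_6$ completes the argument.
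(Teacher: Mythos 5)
Your proposal is correct and follows essentially the same route as the paper: the forward direction via Proposition \ref{zerodiff} combined with Proposition \ref{cygor} (Calabi-Yau implies Gorenstein, but trivial DG free algebras are not Gorenstein), and the reverse direction via the isomorphism classification, the cohomology computations of Proposition \ref{cohomology}, the criterion of \cite{MH} for all representatives with $H=\k$ or $H=\k[\lceil z\rceil]$, and Proposition \ref{bsix} for the exceptional case $\mathcal{B}_6$. You have in fact spelled out the trivial-differential direction more explicitly than the paper's one-line proof does, but the logic is identical.
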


\section*{Acknowledgments}
 The first author is
supported by NSFC  (Grant No.11001056),
the
China Postdoctoral
Science Foundation  (Grant Nos.
20090450066 and 201003244),  the
Key Disciplines of Shanghai Municipality (Grant No.S30104) and the Innovation Program of
Shanghai Municipal Education Commission (Grant No.12YZ031).

\def\refname{References}

\end{document}